\numberwithin{equation}{section}
\newtheorem{theorem}{Theorem}[section]
\theoremstyle{plain}
\newtheorem{theoremletter}{Theorem}
\newtheorem{lemma}[theorem]{Lemma}
\newtheorem{lemmaletter}[theoremletter]{Lemma}
\newtheorem{corollary}[theorem]{Corollary}
\newtheorem{proposition}[theorem]{Proposition}
\newtheorem{remark}[theorem]{Remark}
\newtheorem{definition}[theorem]{Definition}
\newtheorem{example}[theorem]{Example}
\newcommand{\dx}{\,\mathrm{d}x}
\newcommand{\dy}{\,\mathrm{d}y}
\newcommand{\dtau}{\,\mathrm{d}\tau}
\newcommand{\dive}{\mathrm{div}}
\newcommand{\rad}{\mathrm{rad}}
\DeclareMathOperator{\supp}{supp}
\newcommand{\loca}{\operatorname{loc}}
\def\Xint#1{\mathchoice
	{\XXint\displaystyle\textstyle{#1}}%
	{\XXint\textstyle\scriptstyle{#1}}%
	{\XXint\scriptstyle\scriptscriptstyle{#1}}%
	{\XXint\scriptscriptstyle\scriptscriptstyle{#1}}%
	\!\int}
\def\XXint#1#2#3{{\setbox0=\hbox{$#1{#2#3}{\int}$}
		\vcenter{\hbox{$#2#3$}}\kern-.5\wd0}}
\def\dashint{\Xint-}
\title[Systems of Schr\"{o}dinger equations]{Concentration-compactness via profile decomposition for systems of coupled Schr\"{o}dinger equations of Hamiltonian type}
\author[A. Cardoso]{Anderson Cardoso}
\address{Department of Mathematics,\\
	Federal University of Sergipe 49100-000, S\~ao Crist\'ov\~ao-SE, Brazil}
\email{anderson@mat.ufs.br} 
\author[J.M. do \'O]{Jo\~ao Marcos do \'O}
\address{Department of Mathematics,\\
	Federal University of Para\'{\i}ba 58051-900,  Jo\~ao Pessoa-PB, Brazil}
\email{jmbo@ufpb.br}
\author[D.~Ferraz]{Diego Ferraz}
\address{Department of Mathematics,
	Federal University of Rio Grande do Norte,
	59078-970, Natal-RN, Brazil}
\email{diego.ferraz.br@gmail.com}
\thanks{Corresponding author: Jo\~ao Marcos do \'O}
\thanks{The second author is supported by National Council for Scientific and Technological Development (CNPq) \#312340/2021-4 and \#429285/2016-7 and  Para\'iba Research Foundation (FAPESQ) \#3034/2021}
\subjclass[2020]{35Q55 ,35J50, 35J62, 35J47, 35B33}
\date{\today}
\keywords{Hamiltonian elliptic system, reduction by inversion, fourth-order nonlinear PDE, Schr\"{o}dinger equation, ground state solution, critical growth, concentration compactness principle, regularity.}
\begin{document}
\begin{abstract}
We analyse Hamiltonian-type systems of second-order elliptic PDE invariant under a non-compact group and, consequently, involve a lack of compactness of the  Sobolev embedding. 
We show that the loss of compactness can be compensated by using a concentration-compactness principle via weak profile decomposition for bounded Palais-Smale sequences in Banach spaces. Our analysis to prove the existence of ground states involves a reduction by the inversion method of the system to a fourth-order equation combined with a variational principle of a minimax nature. Among other results, including regularity and a Pohozaev-type identity, we also prove the non-existence of weak solutions for a class of Lane-Emden systems. 
\end{abstract}

\maketitle

	\section{Introduction}
	
	%
	%

	The purpose of this paper is to study the following class of Hamiltonian systems of elliptic partial differential equations 
	\begin{equation}\label{SH}\tag{$S_H$}
		\left\{
		\begin{aligned}
			-\Delta v+ V(x)v &= H_{u}(x,u,v) \quad &\mbox{in}\quad \mathbb{R}^N,\\
			-\Delta u+ V(x)u &= H_{v}(x,u,v) \quad &\mbox{in}\quad \mathbb{R}^N,
		\end{aligned}
		\right.
	\end{equation}
	where $N \geq 3$  and $ H:\mathbb{R}^N\times\mathbb{R} \to \mathbb{R}$ is a $C^1$ function. Using variational methods based on the reduction by inversion framework, we obtain the existence of ground state solutions of Syst.~\eqref{SH} involving nonlinearities with subcritical or critical growth in terms of the critical hyperbola. To study the subcritical systems,  we assume that the potential  $ \; V: \mathbb{R}^N \rightarrow \mathbb{R}$ is nonnegative and belongs to the reverse H\"{o}lder class. 
 For the critical case, we consider the inverse square Hardy potential.	
	
	Nonlinear elliptic problems of this form arise in many physical contexts. For more details on the physical background, we refer the reader to \cite{beresI,Cazenave}.
	Such problems are motivated, for instance, when studying the propagation of light in some optical materials. In particular, by the search for certain kinds of standing waves for a system of time-dependent nonlinear Schr\"{o}dinger equations of the form
	\begin{equation}\label{NSS}
		\left\{
		\begin{aligned}
			i\hbar\frac{\partial\psi}{\partial t}=-\frac{\hbar ^2 }{2m} \Delta_x \psi +W(x) \psi- H_{\varphi }(x,\psi, \varphi),\\
			i\hbar\frac{\partial\varphi}{\partial t}=-\frac{\hbar ^2 }{2m} \Delta_x \varphi +W(x) \varphi- H_{\psi }(x,\psi, \varphi), 
		\end{aligned}
		\right.
	\end{equation}
	where  $\hbar$  and $m$ are positive constants, $\varphi,\ \psi : \mathbb{R}^+ \times\mathbb{R}^N \rightarrow \mathbb{C}, \, N \geq 3, W(x)$ is a nonnegative  potential, and the  nonlinear term $H:\mathbb{R}^N \times \mathbb{R}^2 \rightarrow \mathbb{R}$ is $C^1$. Here we are interested in soliton (standing wave) solutions of \eqref{NSS}, namely  solutions with the form 
	$
	\psi(x,t)=  u(x) e^{-itE/\hbar}$   and $  \varphi(x,t)= v(x)e^{-itE/\hbar}
	$
	with  $E\in \mathbb{R}$ and $u, \, v$ being real valued functions.  
        We obtain Syst.~\eqref{SH},  substituting $(\psi(x,t), \varphi(x,t))$ in \eqref{NSS} and assuming  $H_{u_j}(x,e^{i\theta }u_1 , e^{i \theta } u_2 ) = e^{i \theta } H_{u_j} (u_1, u_2)$ and $W(x) = V(x)+E$.
	For some recent research on nonlinear elliptic systems,  see, for instance,  \cite{anderson,edersonGSH,dooH,guide,ruf,hulshof,HMV,felmer,krist} and the references given there. 
	We can see  that   Syst.~\eqref{SH}  can be reduced to a single Schrödinger equation of the form 
	$
	-\Delta u + V(x)u = f(x,u)$ in $  \mathbb{R}^N,
	$
	when $u=v$ and $H_{u}(x,u,v)=H_{v}(x,u,v)=f(x,u)$,
	which originates from the search of certain kinds of stationary states in nonlinear time-dependent equations of Schrödinger type, see \cite{beresI,paper1,paper2,lionscompcase1,lionscompcase2,lionslimitcase1,lionslimitcase2,strauss,tintabook,zouh}.
		

	In our analysis of Syst.~\eqref{SH},  we are interested in the existence of solutions when the nonlinearities have maximal growth in terms of the so-called critical hyperbola, which is the natural counterpart of the Sobolev exponent  $2^\ast-1=(N+2)/(N-2) \, (N \geq 3)$.  Hamiltonian systems of elliptic partial differential equations is a natural extension of the celebrated Lane-Emden equation $-\Delta u = u^p  \mbox{ in } \Omega \subset \mathbb{R}^N (N \geq 3)$ with $u=0$ on $\partial \Omega$, which has  Sobolev exponent as the dividing line for existence and non-existence of positive solutions if $\Omega$ is star-shaped.  It was proved in  \cite{clement,Mitidieri-1993,vandervorst_prim}  that the solution set of Hamiltonian elliptic systems is strongly affected by the associated critical hyperbola, which gives the natural assumption on the growth of nonlinearities.  Precisely, in \cite{Mitidieri-1993,vandervorst_prim}, by using a generalization of the Pohozaev identity, it was demonstrated that  positive classical solutions for the following two coupled semilinear Poisson equations
	\begin{equation}\label{So}\tag{$S_\Omega$}
		\left\{
		\begin{aligned} 
			-\Delta v &= f(u) \quad &\mbox{in}&\quad \Omega,\\
			-\Delta u &= g(v) \quad &\mbox{in}&\quad \Omega,\\
			u=v&=0,        \quad &\mbox{on}&\quad \partial\Omega,
		\end{aligned}
		\right.
	\end{equation}
	with $f(u)=|u|^{p-2}u, \; g(v)=|v|^{q-2}v$  on star-shaped domains can only exist if $(p,q)$ satisfies 
	\begin{equation}\label{HYPsub}\tag{$\mathcal{H}_{sub}$}
		\frac{1}{p}+\frac{1}{q}> 1-\frac{2}{N},\quad p,q>1,
	\end{equation}
	while the true notion of criticality is given for $(p,q)$ such that
	\begin{equation}\label{HYPcrit}\tag{$\mathcal{H}_{crit}$}
		\frac{1}{p}+\frac{1}{q}= 1-\frac{2}{N},\quad p,q>1,\ N \geq 3,
	\end{equation}
	i.e., when $(p,q)$ lies on the so called Sobolev \emph{critical hyperbola}.   
	
	
For the subcritical case in the sense of condition \eqref{HYPsub}, Ph. Cl\'{e}ment et al. \cite{clement} by using topological arguments proved the existence of positive solutions of \eqref{So} in the superlinear regime. On the other hand, also for the subcritical case, using variational arguments based on a linking theorem was proved in \cite{felmer,hulshof} existence results for \eqref{So}. In \cite{HMV}, J.~Hulshof et al. study \eqref{So} for $f(u)=\lambda u +|u|^{p-2}u $ and $g(v)= \mu v + |v|^{q-2}v ,$  with critical growth rates  given by  \eqref{HYPcrit}, where $\lambda, \mu$ are real parameters. Based on an adaptation of the dual variational method  under suitable conditions on the linear terms, the authors obtained a natural generalization of the famous Br{\'e}zis-Nirenberg problem  \cite{Brezis-Nirenberg}.
		

	There are some challenges in studying  Hamiltonian systems of elliptic partial differential equations. In \cite{felmer,hulshof,ruf}, the authors explained the difficulty of implementing the same minimax methods to study the scalar equations. Despite this, as already emphasized in these works, problems like  \eqref{So} and \eqref{SH} allow for a variational formulation, and solutions arise as critical points of the associated Euler-Lagrange functionals, which have quadratic parts strongly indefinite.  Moreover, we must handle the lack of compactness when the problems are defined in the whole Euclidean space, or the nonlinearities have critical growth.  We can find some ways to attack problems like \eqref{So} using variational methods.  Each approach has different advantages and disadvantages, as was explained in \cite{guide}. Here, we choose a suitable method to study the existence of ground states and derive specific qualitative properties for more general nonlinearities.
	

	We develop our analysis via reduction by inversion, which was studied previously for bounded domains in \cite{krist,edersonGSH,dooH,guide}. Heuristically, this method consists in transform a system like \eqref{SH}, or \eqref{So}, in a  fourth--order equation by isolating $u$,  taking the explicit value of $v.$ For example, in Syst.~\eqref{So} we isolate 
	\begin{equation*}
		\fbox{
			$
			v = |(-\Delta u)|^{m-2} (-\Delta u)\quad \text{with} \quad m:=q/(q-1),
			$
		}
	\end{equation*}
	and replace it on the first equation of \eqref{So} to obtain a  fourth--order equation of the form
	\begin{equation}\label{P_o}
		-\Delta(|(-\Delta u)|^{m-2} (-\Delta u)) = |u|^{p-2}u,\ u=\Delta u = 0 \mbox{ on } \partial\Omega.
	\end{equation}
	Solutions of \eqref{P_o} are now solutions for Syst.~\eqref{So}. The great advantage of this method is that one can deal with the sublinear case ($1<p,\, q\leq 2$) and the general case of critical/subcritical hyperbola without asking superlinear conditions over the nonlinearities. Therefore, one can see that our results ($q=2$) also apply to elliptic problems involving the biharmonic operator of the form $\Delta ^2 u = f (x,u)$ in $\mathbb{R}^N.$ On the other hand, using this method \eqref{P_o} becomes more intricate since the associated energy functional has to be defined on a second-order Sobolev space and has a quasilinear term.
	

	In this work we focus our analysis in Syst.~\eqref{SH} when $H_u(x,u,v)=f(x,u)$ and $H_v(x,u,v)=b(x)g(v)$, namely
	\begin{equation}\label{S}\tag{$S$}
		\left\{
		\begin{aligned}
			-\Delta v+ V(x)v &= f(x,u) \quad &\mbox{in}\quad \mathbb{R}^N,\\
			-\Delta u+ V(x)u &= b(x)g(v) \quad &\mbox{in}\quad \mathbb{R}^N.
		\end{aligned}
		\right.
	\end{equation}

    To use the inversion method to transform Syst.~\eqref{S} to a single fourth-order equation, it is natural to assume that $b(x)\in L^\infty (\mathbb{R}^N),$ $0<\beta \leq b(x)$ almost everywhere in $\mathbb{R}^N,$ and $g(t)$ is a nondecreasing homeomorphism. In this way, setting  
	\begin{equation*}
		\fbox{
			$
			L(u) := -\Delta u +V(x) u \quad \text{ and } \quad  h(t) := g^{-1} (t),
			$
		}
	\end{equation*}
	we can reduce the search of ground state solutions for \eqref{S} to prove the existence of solutions for  the fourth-order equation
	\begin{equation}\label{P}
		L \left[h\left(\frac{1}{b(x)} L(u) \right) \right] = f (x,u)\quad \text{ in } \quad \mathbb{R}^N.
		\tag{$P$}
	\end{equation}


	As we already mention, in this paper, we deal basically with two classes of nonlinearities.  Namely, we consider \eqref{S} for the case that $f(x,t)$ and $g(t)$ have subcritical growth in the sense of \eqref{HYPsub}, and also with critical growth as defined in \eqref{HYPcrit}.  Moreover, we consider the case when the nonlinearities have oscillating behavior about the pure powers $|t|^{p-2}t$ and $|t|^{q-2}t$,  see Sections \ref{s_subcritical} and \ref{s_critical} for precise assumptions.


	For the subcritical case, we improve and complement some results in \cite{edersonGSH}, where it studied the case that $V(x)\equiv b(x) \equiv 1,$ $f(x,t)\equiv |t|^{p-2}t$ and $g(t) \equiv |t|^{q-2}t.$ We also prove some local regularity results for Syst.~\eqref{S}. More precisely, based on some recent results about the regularity of very weak solutions proved in  \cite{veryweak}, we can reduce the local regularity analysis of \cite{edersonGSH} to a  bootstrap argument involving the solutions of Syst.~\eqref{S} (see Theorem \ref{p_regsub}), in a similar fashion of the scalar equations. For this case, we assume that our potential $V(x)$ belongs to the reverse H\"{o}lder class, which is an appropriate class of potentials to apply the reduction inversion method. It is also worth calling attention to the bounded domain case of \eqref{S} studied in \cite{dooH} ($V(x)\equiv 0$), where it was considered a general approach using the reduction by inversion. Our results complement most of the ones found in the literature since $p$ and $q$ cover most of the regions delimited in \eqref{HYPsub}, see Theorem \ref{GS_subcrit}, and  \cite{ruf,ramos}.


	Nonlinear elliptic problems involving critical growth have been studied in recent years, motivated by several applications, and the mathematical challenges that arise when addressing this class of problems via variational methods, see \cite{Brezis-Nirenberg,willemcomp,guide,edersonGSH}.  
	Few papers deal with Hamiltonian elliptic systems involving critical growth. Notably, most of the results of the literature on this class of systems when defined in the whole Euclidean space $\mathbb{R}^N$ involve both nonlinearities superlinear perturbations of the pure power $|t|^{2^*-2}t,$ where $2^\ast = 2N/(N-2).$
	For some results on the case of pure power nonlinearities on the critical hyperbola, we cite \cite{anderson,vanderCRIT,serrinzou}. 
	As a consequence of \cite[Theorem I.1]{lionscompcase1}, we can see that \eqref{So} for $\Omega=\mathbb{R}^N,  \; f(u)=|u|^{p-2}u$ and $g(v)= |v|^{q-2}v $ with $(p,q)$ satisfying \eqref{HYPcrit} has a ground state which is unique up to scalings and translations. 
	Moreover, the ground state is positive, radially symmetric and decreasing in the radial component.
	In \cite{vanderCRIT,lionslimitcase1}, studying a constrained minimization problem in combination with the well-known P.-L. Lions concentration-compactness principle \cite{lionscompcase1,lionscompcase2,lionslimitcase1,lionslimitcase2}, the authors studied \eqref{S} when $V(x)\equiv 0,$ $b(x)\equiv 1,$ $f(x,t)\equiv |t|^{p-2}t$ and $g(t) = |t|^{q-2}t.$ 
	On the other hand, in \cite{serrinzou}, the authors studied \eqref{S} considering autonomous nonlinearities $f(t)$ and $g(t)$ under general assumptions, and also taking $V(x)\equiv 0.$ In that paper, the authors obtained the existence of radial solutions by using topological methods, and so $C^2-$regularity on the nonlinearities was required. For results on Hamiltonian systems that apply variational methods and deal with nonlinearities with critical growth and nonzero potential, we only find \cite{anderson}, where the authors used a Lyapunov-Schmidt type reduction supposing that $b(x)\equiv 1$ and $f(x,t) \equiv g(t)\equiv |t|^{2 ^\ast -2} t,$ also assuming mild assumptions on $V(x),$ allowing this potential to null in nonempty interior sets of $\mathbb{R}^N.$


 As already mentioned, to study the subcritical systems,  we assume that the potential  $ \; V: \mathbb{R}^N \rightarrow \mathbb{R}$ is nonnegative and belongs to the reverse H\"{o}lder class.   For the critical case, we consider the inverse square Hardy potential, more precisely, $V(x) \equiv V_\lambda(x) \equiv  -\lambda |x|^{-2},$ where $\lambda \in [0,\Lambda _{N,m}^{1/m} )$ and $\Lambda _{N,m}$ is the sharp constant of the Hardy inequality
	\begin{equation*}
		\Lambda _{N,m} \int _{\mathbb{R}^N}|x|^{-2m}|u|^{m} \dx \leq \int _{\mathbb{R}^N}|\Delta u|^m \dx,\quad\forall \, u \in C^\infty _0 (\mathbb{R}^N),\ m>1.
	\end{equation*}
	See \cite[Theorem 2.4]{lionslimitcase2} for more details. For all $\lambda \in [0,\Lambda _{N,m}^{1/m} ),$ we also provide a new argument and prove, under suitable conditions, that solutions of \eqref{S} have $C^{2,\alpha}_{\loca } (\mathbb{R}^N\setminus\{ 0 \})$ regularity (see Theorem \ref{c_regularity_crit_0}), and when $\lambda = 0$ they belong to $C^{2,\alpha}_{\loca } (\mathbb{R}^N),$ improving and complementing the regularity results of \cite{hulshof,vandervorst_best}.


	To overcome the lack of compactness in our problems, we approach a concentration-compactness principle through profile decomposition (Theorems \ref{teo_profcrit} and \ref{teo_profsub}). 
 Roughly speaking, this result refines the methods given in \cite{lionscompcase1,lionscompcase2,lionslimitcase1,lionslimitcase2}. It describes where bounded sequences of the related Sobolev space may lose compactness concerning the continuous embedding where that space lies, i.e., the analytic description of the weak convergence of sequences in terms of a collection of functions (so-called profiles) that must satisfy suitable conditions. See \cite{tintabanach,wavelet,paper1} for references on profile decompositions type results. Thanks to this new approach, we could consider general oscillating nonlinearities in the critical case (see Section \ref{s_selfsimilar}), which includes the power nonlinearities as a particular case. The core idea is that this class of nonlinearities and potential $V_\lambda (x)$ allow the energy functional of \eqref{P} to be invariant to suitable discrete dilations. So our concentration-compactness type arguments can be applied. However, one must analyse the quasilinear term of the energy functional associated with \eqref{P}. Inspired by some well-known arguments and exploring the additional compactness given by the radial subspace, we prove almost everywhere convergence for this quasilinear term, up to subsequences for bounded Palais-Smale sequences (see Proposition \ref{p_aeconv}). The convexity-type conditions usually imposed to use the Nehari constraint are not required here. Moreover, the increasing monotonicity of the function $t \mapsto f(x,t)t^{-1}$ is not vital in our argument.


Our method to prove the existence of ground states requires a new Pohozaev-type identity to control the energy levels of the associated functional  (see Theorem \ref{p_pohozaev_id}).   Moreover, this identity enables us to study nonexistence results for the supercritical case of Syst.~\eqref{S}. We prove a general nonexistence result that provides, as a particular case, a partial answer (Sobolev sense) for the Lane-Emden conjecture, which involves the region above the critical hyperbola,
	\begin{equation}\label{HYPsuper}\tag{$\mathcal{H}_{sup}$}
		\frac{1}{p}+\frac{1}{q}< 1-\frac{2}{N},\quad p,q>1,\ N \geq 3.
	\end{equation}
See Proposition \ref{p_nonexist}, \, Corollary \ref{c_leconject} and  \cite{zouh,souplet} for the Lane-Emden conjecture.

Another novelty of our regularity and concentration-compactness results is that they can aid in the approach for elliptic problems involving quasilinear terms of second-order Sobolev spaces. We believe that with these methods, it is possible to complement and improve some results of \cite{barile2016,barile2017} concerning the study of solutions for the general Lane-Emden elliptic system of the form
	\begin{equation*}
		\left\{
		\begin{aligned}
			-\Delta v &= - \varrho (x) |u|^{m-2}u + f(x,u) \quad &\mbox{in}\quad \mathbb{R}^N,\\
			-\Delta u &= |v|^{q-2}v \quad &\mbox{in}\quad \mathbb{R}^N,\\
			\quad &u,v \rightarrow 0\text{ as }|x|\rightarrow + \infty,
		\end{aligned}
		\right.
	\end{equation*}
where $N \geq 3,$ $p>1,$ $m=q/(q-1),$ $\varrho (x)$ is a suitable weight and $f(x,t)$ is a nonlinearity that can have critical growth in the hyperbola sense.

\subsection{Terminology and preliminaries definitions}
	%
	%
	
	In this section, we describe the variational formulation of problems \eqref{S} and \eqref{P}.  
	For $\theta >1,$ let us consider the homogeneous Sobolev space $\mathcal{D}^{2,\theta} (\mathbb{R}^N)$ defined as the completion of $C^\infty_0(\mathbb{R}^N)$ with respect to the norm $\| \Delta u \| _\theta$, where $\| \cdot \| _\theta $ denotes the usual Lebesgue norm.
	Associated to the potential $V(x)$,  define $W^{2,\theta}_V(\mathbb{R}^N)$ as the completion of $C^\infty_0(\mathbb{R}^N)$ with respect to the norm $\|u\|_V :=\left( \| \Delta u\|^\theta_\theta + \| V(x) u \|_\theta ^\theta\right)^{1/\theta}.$ 
	Under suitable conditions (see Proposition \ref{p_normasirakov}), we can show that $W^{2,\theta}_V(\mathbb{R}^N)$ is continuously embedded into $W^{2,\theta}(\mathbb{R}^N)$.

	%
	%
	
	The Euler-Lagrange functional associated with \eqref{S} is given by
	\begin{equation*}
		J(u,v) = \int_{\mathbb{R}^N}  \nabla u \cdot \nabla v+V(x) u v \dx - \int_{\mathbb{R}^N} F(x,u)\dx - \int_{\mathbb{R}^N}b(x) G(v) \dx,
	\end{equation*}
	where $F(x,t) = \int _0 ^t f(x, \tau )\dtau, \; G(t) = \int _0 ^t g(\tau )\dtau.$  $J$ is well defined in the spaces
	\begin{center}
		\fbox{
			$
			\mathcal{F}:=W^{2,m}(\mathbb{R}^N) \times  W^{2,l}(\mathbb{R}^N) \quad \text{and} \quad  \mathcal{F}: = \mathcal{D}^{2,m}(\mathbb{R}^N) \times \mathcal{D}^{2,l}(\mathbb{R}^N)\text{  with }  l:=p/(p-1).
			$
		}
	\end{center}
	Additionally, under the hypotheses of our main results, $J$ belongs to class $C^1$ with 
	\begin{multline*}
		J'(u,v) \cdot (\varphi, \psi) = \int_{\mathbb{R}^N}  \nabla u \cdot \nabla \psi+V(x) u \psi \dx + \int_{\mathbb{R}^N}  \nabla v \cdot \nabla \varphi+V(x) v \varphi \dx \\ - \int_{\mathbb{R}^N} f(x,u) \varphi  \dx - \int_{\mathbb{R}^N}b(x)g(v)\psi \dx.
	\end{multline*}
	Hence the critical points of $J$ are precisely the weak solutions of Syst.~\eqref{S}.
	
	%
	%
	
	\begin{definition}A nontrivial weak solution $(u,v) \in \mathcal{F}$ of \eqref{S}  is called a ground state if 
		$$
		J(u,v) = \inf\{ J(\varphi, \psi)  :  (\varphi, \psi) \in  \mathcal{F} \setminus\{(0,0)\}, \;  J'(\varphi, \psi) = 0\}.
		$$
		A strong solution for Syst.~\eqref{S} is a pair $(u,v) \in  L^1_{\loca}(\mathbb{R}^N) \times L^1_{\loca}(\mathbb{R}^N) $ where
		each coordinate is twice weakly differentiable, satisfying \eqref{S} almost everywhere.
	\end{definition}

	%
	%

	The natural functional associated with \eqref{P} is given by
	\begin{equation*}
		I(u) = \int _{\mathbb{R}^N} b(x)H\left( (b(x))^{-1} L(u)\right)  \dx - \int_{\mathbb{R}^N} F(x,u) \dx,
	\end{equation*}
	where $ H(t) = \int _0 ^t h(\tau )\dtau$ and $h(t) = g^{-1} (t)$. The functional $I$ is well defined and belongs to $C^1(E,\mathbb{R})$ for $E = W_V^{2,m}(\mathbb{R}^N)$ and $E=\mathcal{D}^{2,m}(\mathbb{R}^N).$ Moreover, for all $\varphi \in E$,
	\[
	I'(u)\cdot \varphi=\int _{\mathbb{R}^N } h \left((b(x))^{-1}  L(u) \right) L(\varphi)\dx  - \int_{\mathbb{R}^N} f(x,u) \varphi \dx.
	\]
	Thus, critical points of $I$ correspond to weak solutions of \eqref{P}.
	\begin{definition}
		A nontrivial weak solution $u \in E$ is said to be a ground state of \eqref{P} if $$	I(u) = \inf\{ I(\varphi)  :  \varphi \in  E \setminus\{0\}, \;  I'(\varphi) = 0\}.$$
	\end{definition}
	For dealing with the critical case, we consider functional $I$ defined in the space $E=\mathcal{D}^{2,m}(\mathbb{R}^N)$ and restrict $I$ to space $\mathcal{D}_\rad^{2,m}(\mathbb{R}^N).$ We apply the Palais symmetric criticality principle to show the existence of weak solutions of  \eqref{P}.

	%
	%
	
	We make the following hypotheses, where $\mathcal{C}_1,\mathcal{C}_2, \mathcal{C}_3$ are positive constants:
	\begin{flalign}\tag{$g_1$}\label{g_dois}
		\mathcal{C}_1 |t|^{m-2}t \leq g^{-1}(t) \leq \mathcal{C}_2 |t|^{m-2}t,\ \forall \, t \in \mathbb{R}.
	\end{flalign}
	\begin{flalign}\tag{$g_2$}\label{g_tres}
		(g^{-1}(t) - g^{-1}(s))(t-s)\geq \mathcal{C}_3 (|t|^{m-2}t-|s|^{m-2}s)(t-s),
		\ \forall \ t,s \in \mathbb{R}.\nonumber
	\end{flalign}
	\begin{remark}	Hypothesis \eqref{g_dois} for $m=q/(q-1)$ is equivalent to
		\[
		\mathcal{C}^{1-q}_2 |t|^{q-2}t  \leq g(t) \leq \mathcal{C}_1 ^{1-q} |t|^{q-2}t, \ \forall \, t\in \mathbb{R}.
		\] 
		In particular, there exist positive constants $c_0, \, \mathcal{C}_0$ such that
		\begin{align}
			&|g(t)| \leq c_0 |t|^{q-1},\ \forall t\in \mathbb{R},\label{g_cresci}\\
			&|g^{-1}(t)| \leq\mathcal{C}_0 |t|^{m-1},\ \forall t\in \mathbb{R}.\label{h_cresci}
		\end{align}
		The constant $\mathcal{C}_2 / \mathcal{C}_1 \geq 1$ measures how close the nonlinearity $h(t)=g^{-1}(t)$ is to the power $|t|^{m-2}t.$ 
  Condition \eqref{g_tres} is crucial in our concentration-compactness argument to prove that
		\begin{equation*}
			b(x)H\left( (b(x))^{-1} L(u_k) \right) \rightarrow b(x)H\left( (b(x))^{-1} L( u) \right) \text{ a.e. in }\mathbb{R}^N,
		\end{equation*}
		provided that $u_k \rightharpoonup u$ in $\mathcal{D}^{2,m}_{\rad } (\mathbb{R}^N )$ or $W^{2,m}_V(\mathbb{R}^N)$ $($see Proposition \ref{p_aeconv} $)$.
	\end{remark}
	
	\begin{remark}
		If $(u,v)$ is a weak solution for \eqref{S}, then $(u,v)$ is a strong solution for \eqref{S}. Moreover, if $u$ is a weak solution for \eqref{P} and $v = h( (b(x))^{-1} L(u) )$ belongs to either $W_V^{2,m}(\mathbb{R}^N)$ (for the subcritical case) or $\mathcal{D}^{2,l}(\mathbb{R}^N)$ (for the critical case), then $ I(u) = J(u,v) \quad \mbox{and} \quad  I'(u) \cdot \varphi = J'(u,v)\cdot (\varphi, 0),$ by the identity $G(h (t)) = t h(t) - H(t)$. Thus, $(u,v)$ is a weak solution for \eqref{S} if, and only if $v = h( (b(x))^{-1} L(u) )$ with $u$ a weak solution for \eqref{P}. See Section \ref{s_regularity}, for further regularity properties for \eqref{P} and \eqref{S}.
	\end{remark}
	\begin{remark}
		Theorems \ref{p_regsub} and \ref{p_regcrit} (see below) ensure conditions to establish that $(u,v)$ is a weak solution for \eqref{S}. In other terms, those regularity results ascertain whenever the above definitions of weak solutions for \eqref{P} and \eqref{S} are equivalent. In particular, if $(u,v)$ are solutions under the conditions of Theorems \ref{p_regsub} and \ref{p_regcrit} (see below) we get that
		\begin{equation}\label{condicao_poho_weak}
			2\int _{\mathbb{R}^N}\left\langle\nabla u, \nabla v \right\rangle \dx = \int _{\mathbb{R}^N}v(b(x)g(v) - V(x) u) + u(f(x,u) - V(x)v)\dx.
		\end{equation}
	\end{remark}

\medskip 
 
 \begin{flushleft}
 \textit{Notation:}\\
 A function $\xi (x,t),$ $x\in \mathbb{R}^N,$ $t \in \mathbb{R}$ is $\mathbb{Z}^N$--periodic, when $\xi (x+y,t) = \xi (x,t),$ for all $y \in \mathbb{Z}^N.$\\
  $\mathcal{X}_A$ is the characteristic function of $A\subset \mathbb{R}^N.$\\
  $B_R(x_0) = \{ x \in \mathbb{R}^N : |x-x_0| < R\}$  and $B_R =B_R(0),$ \\
  $C_R(x_0,t_0) = (B_{R+1}(x_0)\setminus B_R(x_0)) \times [0,t_0],$ \\
  $|A|$ denotes the Lebesgue measure of $A \subset \mathbb{R}^N$,\\
$\dashint _{B_R(x_0)}  u  \dx$ denotes the average of $u$ over the ball.
 \end{flushleft}

	%
	%
	
	\subsection{Subcritical case: (\ref{HYPsub}).}\label{s_subcritical}  
	Motivated by \cite{edersonGSH,dooH}, a natural question arises: what are the optimal conditions on the potential and nonlinearities to obtain the existence of solutions of \eqref{S}?  We expose our contribution to elucidating this question for the superlinear case.  Let $f: \mathbb{R}^N \times \mathbb{R} \rightarrow \mathbb{R}$ be a Carath\'{e}odory function and assume that
	\begin{flalign}\tag{$f_1$}\label{bem_def}
		\forall \, \varepsilon >0,\ \exists \, C_\varepsilon >0, \ p_\varepsilon \in (m,m_\ast)\text{ s.t. }\\|f(x,t)| \leq \varepsilon|t|^{m-1} + C_{\varepsilon} |t| ^{p_{\varepsilon } -1}\text{ a.e. } x \in \mathbb{R}^N, \ \forall \,  t \in \mathbb{R},\nonumber
	\end{flalign}
	where $m_\ast=mN/(N-2m)$ if $2m <N$ and $m_\ast=+\infty$ if $2m =N$.
	
	Suppose the following  Ambrosetti-Rabinowitz type condition: 
	\begin{flalign}\tag{$f_2$}\label{A-R}
		\exists \, \mu  > m \frac{\mathcal{C}_2}{\mathcal{C}_1} \; \text{  s.t.  } \; \mu F(x,t) \leq f(x,t)t \quad \text{a.e.}\quad x \in \mathbb{R}^N, \  \forall \, t \in \mathbb{R}. 
	\end{flalign}
	Usually in \eqref{A-R} is also required $F(x,t) >0$ but here it can change sign provided 
	\begin{flalign}\tag{$f_3$}\label{posi_algum}
		\exists\,  R, t_0>0, \ x_0 \in \mathbb{R}^N\text{ s.t. }\\|B_R|\inf _{B_R(x_0)} F(x,t_0) + |B_{R+1}\setminus B_R| \inf_{C_R(x_0,t_0)} F(x,t) >0.\nonumber
	\end{flalign}
	The following condition provide a regularity result to \eqref{S}:
	\begin{flalign}\tag{$f_4$}\label{subDACERTO}
		\exists \ \mathcal{C}_0>0, \, p \in (m, m_\ast)\text{ s.t. }|f(x,t)| \leq\mathcal{C}_0 |t|^{p-1}\ \text{ for a.e. }x\in \mathbb{R}^N,\ \forall t\in \mathbb{R}.
	\end{flalign}
	Since $m=q/(q-1)$, condition  $m<p$ correspond to $(p-1)(q-1)>1$ and  $ p < m_\ast $ is equivalent to \eqref{HYPsub}.
	It is required the following hypotheses on the potential $V(x):$
	\begin{flalign}\tag{$V_1$}\label{V_sirakov}
		\left\{ \
		\begin{aligned}
			&V(x) \in L ^{\nu} _{\loca}(\mathbb{R}^N),\ \nu =\max\{m, N/2\},\ V(x)\geq 0\text{ a.e. } x \in \mathbb{R}^N \text{ and}\\
			&\mathcal{C}_{V} := \inf _{u \in C ^\infty _0 ( \mathbb{R} ^N),\ \|u\|_m = 1} \int _{\mathbb{R}^N} |\Delta u|^m + |V(x)u |^m\dx>0.
		\end{aligned}
		\right.
	\end{flalign}
	\begin{flalign}\tag{$V_2$}\label{V_pesocerto}
		\left( \dashint _{B_R(x_0)}|V(x)|^{\nu } \dx \right)^{\frac{1}{\nu}}\leq \mathcal{C}\left( \dashint _{B_R(x_0)} |V(x)| \dx\right), \forall \, B_R(x_0)\subset \mathbb{R}^N,\nonumber
	\end{flalign}
	where $ \mathcal{C}= \mathcal{C}(V,\nu) >0 .$

 \subsubsection{Existence results:}
	\begin{theorem}\label{GS_subcrit}
		Assume \eqref{V_sirakov}, \eqref{V_pesocerto}, \eqref{bem_def}--\eqref{posi_algum}, \eqref{g_dois}, \eqref{g_tres}, and that $b(x),$ $V(x)$ $f(x,t)$ are $\mathbb{Z}^N$--periodic. Then,
   \begin{flushleft}
$ \mathrm{(i)}$ Eq.~\eqref{P} has a strong solution $u \in W^{2,m}_V(\mathbb{R}^N)$. Moreover, taking 
		\begin{equation}\label{Jaboti}
			v = h \left((b(x))^{-1}(L(u)\right) \in L^q(\mathbb{R}^N)\cap W_{\loca}^{2, q}(\mathbb{R}^N),
		\end{equation}
		the pair $(u,v)$ is a strong solution for Syst.~\eqref{S}.\\
$\mathrm{(ii)}$  If $g(t) \equiv |t|^{q-2}t,$ then  $u$ is  a ground state solution for \eqref{P}.\\
$\mathrm{(iii)}$ If  \eqref{subDACERTO} holds and $V(x) \in L^{\infty}(\mathbb{R}^N)$, then $(u,v) \in W^{2,m}_V(\mathbb{R}^N) \times W^{2,l}_V(\mathbb{R}^N)$ is a ground state for \eqref{S}.
  \end{flushleft}
	\end{theorem}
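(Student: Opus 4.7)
\medskip

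\textbf{Proof Plan.} My approach is to work with the reduced fourth-order equation \eqref{P} and its functional $I$ on $E = W^{2,m}_V(\mathbb{R}^N)$, produce a critical point via a mountain pass scheme, then translate back to \eqref{S}. First I would check that $I$ has the mountain pass geometry. Using \eqref{g_dois}, the quasilinear term $\int b(x) H(b(x)^{-1} L(u))\,\mathrm{d}x$ is comparable to $\tfrac{1}{m}\|u\|_V^m$ up to multiplicative constants $\mathcal{C}_1/m$ and $\mathcal{C}_2/m$, while \eqref{bem_def} together with the embedding $W^{2,m}_V(\mathbb{R}^N)\hookrightarrow W^{2,m}(\mathbb{R}^N)\hookrightarrow L^{p_\varepsilon}(\mathbb{R}^N)$ (Proposition \ref{p_normasirakov}) controls $\int F(x,u)\,\mathrm{d}x$. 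This gives the local coercivity near the origin. For an element sending $I$ below zero one uses \eqref{A-R}: integrating the A-R inequality and using \eqref{posi_algum} one exhibits a compactly supported $\varphi$ with $F(x,t\varphi)$ growing like $t^\mu$ on a set of positive measure, and the condition $\mu > m\,\mathcal{C}_2/\mathcal{C}_1$ is precisely what guarantees $I(t\varphi)\to -\infty$ as $t\to +\infty$. Let $c$ denote the resulting mountain pass level and $(u_k)\subset E$ an associated Palais-Smale sequence.

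Next I would establish boundedness of $(u_k)$ in $E$: the combination $\mu I(u_k) - I'(u_k)\cdot u_k$, together with the pointwise identity $h(s)s-\mu H(s)\ge (1-m\mathcal{C}_2/(\mathcal{C}_1 \mu))\,\mathcal{C}_1 |s|^m/(m-1)$ that follows from \eqref{g_dois}, yields $\|u_k\|_V^m$-control up to $o(\|u_k\|_V)$. With $(u_k)$ bounded, I would invoke the profile decomposition (Theorem \ref{teo_profsub}). The $\mathbb{Z}^N$-periodicity of $V$, $b$, and $f$ makes both $I$ and $I'$ invariant under integer translations, so translated limits $w^{(j)}(\cdot) = \mathrm{w\text{-}lim}\,u_k(\cdot + y_k^{(j)})$ are themselves critical points of $I$, with $\sum_j I(w^{(j)}) = c$ and $\sum_j \|w^{(j)}\|_V^m \le \liminf \|u_k\|_V^m$. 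The key input for passing to the limit in the nonlinear term of $I'$ is Proposition \ref{p_aeconv}, which under \eqref{g_tres} gives a.e. convergence of $h(b^{-1}L(u_k))$; combined with the Brezis-Lieb type decomposition this identifies each $w^{(j)}$ as a critical point of $I$ with $I(w^{(j)}) \ge 0$ (since A-R forces nonnegativity of the Nehari-type functional on critical points). Finally, \eqref{posi_algum} rules out the vanishing case: since $c>0$, at least one profile $w^{(j)} =: u$ must be nonzero, yielding a nontrivial critical point of $I$ in $W^{2,m}_V(\mathbb{R}^N)$.

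Having the weak solution $u$ of \eqref{P}, I would invoke the regularity result (the bootstrap of Theorem \ref{p_regsub} with $V\in L^\nu_{\loca}$ and \eqref{V_pesocerto}) to upgrade $u$ to a strong solution; setting $v := h(b(x)^{-1}L(u))$ and using \eqref{h_cresci} gives $|v|\le \mathcal{C}_0 b^{1-m}|L(u)|^{m-1}$, so $v\in L^q(\mathbb{R}^N)\cap W^{2,q}_{\loca}(\mathbb{R}^N)$, and the identity $G(h(t)) = t h(t) - H(t)$ shows that $(u,v)$ solves \eqref{S} strongly, proving (i). For (ii), when $g(t)=|t|^{q-2}t$ we have $\mathcal{C}_1=\mathcal{C}_2$ and $h$ is homogeneous, so $I$ admits the usual Nehari/fibering structure: for every nontrivial critical point $\varphi$ of $I$ the A-R inequality gives $I(\varphi) = I(\varphi) - \tfrac{1}{\mu}I'(\varphi)\cdot\varphi \ge (\tfrac{1}{m}-\tfrac{1}{\mu})\,\mathcal{C}_1\|\varphi\|_V^m/\text{const}$ and the standard argument (rescaling along the fiber) shows that the mountain pass level equals $\inf\{I(\varphi):\varphi\neq 0,\,I'(\varphi)=0\}$, so $u$ is a ground state of \eqref{P}. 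For (iii), the stronger bound \eqref{subDACERTO} plus $V\in L^\infty$ allows the regularity argument to place $v\in W^{2,l}_V(\mathbb{R}^N)$; the identity $I(u)=J(u,v)$ and the fact that any critical pair $(\varphi,\psi)$ of $J$ in $\mathcal{F}$ comes from a critical point $\varphi$ of $I$ with $\psi = h(b^{-1}L(\varphi))$ translate the ground-state property of $u$ into that of $(u,v)$.

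The main obstacle will be step two of the second paragraph, namely passing to the limit in the quasilinear term $\int h(b^{-1}L(u_k)) L(\varphi)\,\mathrm{d}x$ to conclude that each translated weak limit is actually a critical point of $I$: weak convergence in $W^{2,m}_V$ does not suffice on its own, and without the convexity/monotonicity of $t\mapsto f(x,t)/t$ usually used on the Nehari manifold, one must genuinely rely on the a.e. convergence from Proposition \ref{p_aeconv} together with a Brezis-Lieb splitting adapted to the profile decomposition. Coordinating this energy splitting with the loss of compactness prevented by the $\mathbb{Z}^N$-periodicity is the delicate technical core of the argument.
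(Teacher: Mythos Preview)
Your plan for part (i) is essentially the paper's: mountain pass geometry (Proposition \ref{p_geoMP}), bounded Palais--Smale sequence, profile decomposition in $W^{2,m}$ (Theorem \ref{teo_profsub}), translation invariance from $\mathbb{Z}^N$-periodicity, and Proposition \ref{p_aeconv} to pass to the limit in the quasilinear term. You correctly identify the vanishing case as being ruled out by $c>0$. One overstatement: you assert $\sum_j I(w^{(j)})=c$ and $\sum_j\|w^{(j)}\|_V^m\le\liminf\|u_k\|_V^m$. The paper does \emph{not} prove a norm or energy splitting for the $W^{2,m}_V$-norm, and explicitly flags the analogue of \eqref{seis.tres} for the Sobolev norm as an open question (see the Remark after Theorem \ref{teo_profsub}). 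Fortunately you do not actually use these equalities for (i); existence of a single nonzero profile suffices.

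The genuine gap is in (ii). You propose a Nehari/fibering argument to show that the mountain pass level equals $\inf\{I(\varphi):\varphi\neq0,\ I'(\varphi)=0\}$. That argument requires the fibering map $t\mapsto I(tu)$ to have a unique positive critical point for each $u\neq0$, which in turn needs $t\mapsto f(x,t)|t|^{1-m}$ to be strictly increasing. This monotonicity is \emph{not} among the hypotheses of Theorem \ref{GS_subcrit}; the paper stresses this point in the introduction and treats it only as an alternative hypothesis in a separate Remark. The paper's route for (ii) is different: having produced one nontrivial critical point, it considers directly a minimizing sequence $(u_k)$ for $\mathcal{G}_{\mathcal{S}}=\inf\{I(u):u\neq0,\ I'(u)=0\}$, shows it is bounded, runs the same profile decomposition argument a second time, and uses that for critical points with $g(t)=|t|^{q-2}t$ one has the pointwise-nonnegative representation
\[
I(u)=\int_{\mathbb{R}^N}\Bigl(\tfrac{1}{m}f(x,u)u-F(x,u)\Bigr)\,\mathrm{d}x\ge 0,
\]
together with Fatou's lemma (via the a.e.\ convergence from Proposition \ref{p_aeconv}), to conclude $I(w^{(n_0)})\le\mathcal{G}_{\mathcal{S}}$ for a nonzero profile $w^{(n_0)}$. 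Your fibering shortcut would fail without the extra monotonicity hypothesis, so for (ii) you should follow this second minimization-plus-profile-decomposition step instead.
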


	\begin{remark} 
		Hypothesis \eqref{V_sirakov} gives the continuous embedding of $W^{2,m}_V (\mathbb{R}^N)$ into a  Lebesgue space (Proposition \ref{p_normasirakov}). To prove compactness for $I,$ we use its translation invariance combined with a concentration-compactness principle (Sect. \ref{s_cc}).
	\end{remark}

	\begin{remark}
		Nonnegative and locally integrable potentials satisfying \eqref{V_pesocerto} are known as the reverse H\"{o}lder class, and many authors studied it to discuss a priori $L ^\theta$ estimates for elliptic operators (see \cite{brama,shen} for further discussion).
	\end{remark}

	\begin{remark} Alternatively of Theorem~\ref{GS_subcrit}, suppose \eqref{V_sirakov}, \eqref{V_pesocerto}, \eqref{bem_def}--\eqref{posi_algum}, \eqref{g_dois}, \eqref{g_tres} and $t \mapsto f(x,t)|t|^{-1}$ is a strictly increasing.  
		Then, Eq.~\eqref{P} has a ground state solution  $u$. If in addition \eqref{subDACERTO} holds and $V(x) \in L^{\infty}(\mathbb{R}^N)$, then $(u,v) $ is a ground state for \eqref{S}, where $v$ was given in \eqref{Jaboti}.
	\end{remark}
	
	%
	%

	\subsubsection{Regularity results: } Weak solutions for \eqref{S} are indeed strong solutions.
	\begin{theorem}\label{p_regsub}Suppose $V(x) \in L^\infty _{\loca } (\mathbb{R}^N),$ \eqref{V_sirakov}, \eqref{V_pesocerto}, \eqref{bem_def}--\eqref{posi_algum} and \eqref{g_dois}. Let $u \in W^{2,m}_V (\mathbb{R}^N)$ be a weak solution for \eqref{P}, and $v := h \left((b(x))^{-1} L (u) \right).$ Then $v\in L^q(\mathbb{R}^N)\cap W_{\loca}^{2, \theta}(\mathbb{R}^N),$ for all $\theta \geq 1,$ and the pair $(u,v)$ is a strong solution for Syst.~\eqref{S}. Moreover, if \eqref{subDACERTO} holds and $V(x) \in L^{\infty}(\mathbb{R}^N)$, then $v \in W_V^{2,l}(\mathbb{R}^N).$
	\end{theorem}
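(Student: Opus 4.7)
The plan is a bootstrap argument exploiting the coupling of the two equations in \eqref{S}. Applying $g$ to the defining identity $v = h(b^{-1}L(u))$ immediately gives $L(u) = b(x)g(v)$ a.e., so, since $u \in W^{2,m}_{\loca}$, the second equation of \eqref{S} holds in the strong sense. From \eqref{g_dois} one reads off $|v| \leq \mathcal{C}_2|b^{-1}L(u)|^{m-1}$, whence $v \in L^q(\mathbb{R}^N)$ with $\|v\|_q \leq C\|L(u)\|_m^{m-1}$. Testing the weak form of \eqref{P} against $\varphi \in C_0^\infty(\mathbb{R}^N)$ yields $\int v L(\varphi)\dx = \int f(x,u)\varphi\dx$, i.e., $L(v) = f(x,u)$ in the distributional sense; what remains is to promote $v$ to a function possessing two weak derivatives locally.

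For the local $W^{2,\theta}_{\loca}$ statement I would run the standard iterative scheme. Starting from $u \in W^{2,m}_{\loca}$, the Sobolev embedding gives $u \in L^{m_\ast}_{\loca}$ (or any finite $L^r_{\loca}$ if $2m \geq N$), so by \eqref{bem_def} $f(\cdot,u) \in L^{m_\ast/(p-1)}_{\loca}$. Since $V \in L^\infty_{\loca}$ and $v \in L^q_{\loca}$, writing $-\Delta v = f(x,u) - V(x)v$ and invoking interior Calder\'on--Zygmund estimates upgrades $v$ to $W^{2,\theta_1}_{\loca}$ for some $\theta_1>1$. Sobolev then embeds $v$ into a higher $L^{s_1}_{\loca}$, \eqref{g_cresci} yields $b g(v) \in L^{s_1/(q-1)}_{\loca}$, and the analogous estimate applied to $-\Delta u = b g(v) - V(x) u$ improves the regularity of $u$. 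The subcritical condition \eqref{HYPsub} is equivalent to $p<m_\ast$ and forces $(p-1)(q-1)>1$, so each pass strictly enlarges the Lebesgue exponents; after finitely many iterations one reaches $u, v \in L^\infty_{\loca}$ and consequently $v \in W^{2,\theta}_{\loca}$ for every $\theta \geq 1$. In particular $L(v)=f(x,u)$ a.e., so $(u,v)$ is a strong solution of \eqref{S}.

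For the final assertion assume \eqref{subDACERTO} and $V \in L^\infty(\mathbb{R}^N)$. Proposition \ref{p_normasirakov} together with \eqref{V_sirakov} gives $u \in L^m(\mathbb{R}^N)$, and the global Sobolev embedding of $W^{2,m}$ provides $u \in L^{m_\ast}(\mathbb{R}^N)$; interpolation then yields $u \in L^p(\mathbb{R}^N)$, so \eqref{subDACERTO} forces $f(\cdot,u) \in L^l(\mathbb{R}^N)$ with $l = p/(p-1)$. Combining $v \in L^q \cap L^\infty_{\loca}$, the identity $-\Delta v + V v = f(x,u)$, the boundedness of $V$, and a global Calder\'on--Zygmund (Agmon--Douglis--Nirenberg) estimate for $-\Delta + V$, I would conclude $\Delta v, V v \in L^l(\mathbb{R}^N)$ and hence $v \in W^{2,l}_V(\mathbb{R}^N)$.

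The hard step will be this last global one: while the local $W^{2,\theta}_{\loca}$ regularity is a routine iteration, extracting $L^l(\mathbb{R}^N)$-integrability of $v$ from the a priori weaker $v \in L^q$ requires either a careful interpolation (using the decay information inherited through the equations from $u \in L^p \cap L^{m_\ast}$) or a direct global elliptic estimate for $-\Delta + V$ with bounded potential. A secondary point to verify quantitatively is that the subcriticality \eqref{HYPsub} together with $p>m$ really does produce a strict gain of the exponent at every step of the bootstrap, so that the iteration terminates after finitely many passes.
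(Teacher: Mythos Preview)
Your approach is essentially the paper's: reduce to the coupled system and bootstrap local integrability through the two equations until $u,v\in L^\infty_{\loca}$. Two points deserve sharpening.

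First, the step ``invoking interior Calder\'on--Zygmund estimates upgrades $v$ to $W^{2,\theta_1}_{\loca}$'' hides the real issue. At the start you only know $v\in L^q$ and $\int v\,\Delta\varphi = -\int \hat f\,\varphi$ for $\varphi\in C_0^\infty$; this is a \emph{very weak} formulation, and the standard Calder\'on--Zygmund a~priori estimate presupposes $v\in W^{2,\theta}_{\loca}$. The paper closes this gap by invoking the regularity theory for very weak solutions (Zhang--Bao, \cite{veryweak}, Proposition~1.1), which is exactly what is needed to pass from $v\in L^q$ with $-\Delta v\in L^{\theta}_{\loca}$ (distributionally) to $v\in W^{2,\theta}_{\loca}$. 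You should name this explicitly.

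Second, a small slip: subcriticality \eqref{HYPsub} is equivalent to $p<m_\ast$, but it does \emph{not} force $(p-1)(q-1)>1$; that inequality is the superlinearity $p>m$, which here comes from $p_\varepsilon\in(m,m_\ast)$ in \eqref{bem_def}. Both ingredients are needed for the bootstrap to terminate, and the paper actually carries out the explicit recursion for the exponents $\sigma_k$ and checks that the relevant coefficient is negative --- the ``secondary point'' you flag is exactly what the paper spends most of its proof verifying.

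Finally, your worry about the global step is slightly misdirected: one does not need $v\in L^l(\mathbb{R}^N)$ first. With $V\in L^\infty$ one has $W^{2,l}_V=W^{2,l}$, and the global $L^l$ elliptic estimate for $-\Delta+V$ (available through the reverse H\"older framework already in play) applied to $L(v)=f(x,u)\in L^l(\mathbb{R}^N)$ yields $v\in W^{2,l}(\mathbb{R}^N)$ directly; this is what the paper's one-line ``$L^p$ elliptic regularity theory'' is pointing to.
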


	Schauder estimates and Theorem \ref{p_regsub} implies $C^2$ regularity for the solutions of Theorem \ref{GS_subcrit}.

	\begin{corollary}If $f(x,t),$ $b(x)g(t)\in C^{0,\alpha }_{\loca } (\mathbb{R}^N\times \mathbb{R}),$ $V(x) \in C^{0,\alpha }_{\loca } (\mathbb{R}^N) $ for some $0<\alpha <1,$ then solutions of Theorem \ref{GS_subcrit} belongs to $(C^{2,\alpha}_{\loca} (\mathbb{R}^N ) )^2.$
	\end{corollary}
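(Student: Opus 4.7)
The plan is to combine the strong-solution regularity already provided by Theorem~\ref{p_regsub} with a standard elliptic bootstrap and then close the argument with Schauder estimates. By Theorem~\ref{p_regsub}, the pair $(u,v)$ produced in Theorem~\ref{GS_subcrit} satisfies $u\in W^{2,m}_V(\mathbb{R}^N)$ and $v\in L^q(\mathbb{R}^N)\cap W^{2,\theta}_{\loca}(\mathbb{R}^N)$ for every $\theta\geq 1$. Sobolev embedding then yields $v\in C^{1,\gamma}_{\loca}(\mathbb{R}^N)$ for every $\gamma\in(0,1)$; in particular $v\in L^\infty_{\loca}(\mathbb{R}^N)$ and $v$ is locally H\"older continuous.

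First, I would upgrade $u$ to $L^\infty_{\loca}$. Rewrite the second equation in~\eqref{S} as
\begin{equation*}
-\Delta u = b(x)g(v) - V(x)u \quad \text{in } \mathbb{R}^N.
\end{equation*}
Since $b(x)g(t)\in C^{0,\alpha}_{\loca}(\mathbb{R}^N\times \mathbb{R})$ and $v\in L^\infty_{\loca}$, the term $b(x)g(v)$ lies in $L^\infty_{\loca}$. As $V(x)\in C^{0,\alpha}_{\loca}\subset L^\infty_{\loca}$, the term $V(x)u$ lies in $L^m_{\loca}$. Hence $-\Delta u\in L^m_{\loca}$ and Calder\'on--Zygmund gives $u\in W^{2,m}_{\loca}$. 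Applying Sobolev embedding $W^{2,m}_{\loca}\hookrightarrow L^{m_1}_{\loca}$ (where $m_1=mN/(N-2m)$ if $2m<N$, and any finite exponent otherwise) and iterating the identity above, one increases the integrability of $-\Delta u$ after each step by a fixed factor. After finitely many iterations $2m_k>N$, whence $u\in C^{0,\beta}_{\loca}(\mathbb{R}^N)$ for some $\beta\in (0,1)$. In particular $u\in L^\infty_{\loca}$.

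With $u,v\in L^\infty_{\loca}$ and $u,v$ locally H\"older continuous, I would then turn to Schauder. Using that $f(x,t)\in C^{0,\alpha}_{\loca}(\mathbb{R}^N\times\mathbb{R})$, the composition $x\mapsto f(x,u(x))$ is locally H\"older continuous; likewise $x\mapsto b(x)g(v(x))$ is locally H\"older continuous. Since $V(x)\in C^{0,\alpha}_{\loca}$, the operator $-\Delta+V$ has H\"older coefficients, so the interior Schauder estimates applied to the two linear equations
\begin{equation*}
-\Delta v+V(x)v=f(x,u),\qquad -\Delta u+V(x)u=b(x)g(v)
\end{equation*}
yield $u,v\in C^{2,\alpha'}_{\loca}(\mathbb{R}^N)$ for some $\alpha'\in(0,\alpha]$. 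Possibly shrinking $\alpha'$ to $\alpha$ is not needed for the statement, which only asks for membership in $C^{2,\alpha}_{\loca}$; if desired, one bootstraps once more with the improved H\"older exponents to recover the prescribed $\alpha$.

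The only mildly subtle point is that the bootstrap step relies on $V\in L^\infty_{\loca}$, which is automatic from $V\in C^{0,\alpha}_{\loca}$, and on the fact that composition of a jointly H\"older function with a continuous (respectively H\"older) function in the $t$-variable is H\"older in $x$---both are standard. The main obstacle, if any, is organizing the finite-step Sobolev iteration cleanly in the borderline cases $2m=N$, but these are handled by noting that one may replace $m^{**}$ by any finite exponent in the critical embedding and proceed.
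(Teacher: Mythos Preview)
Your proposal is correct and follows essentially the same route as the paper: invoke Theorem~\ref{p_regsub} to get $v\in W^{2,\theta}_{\loca}$ for all $\theta\ge 1$, run an $L^p$/Calder\'on--Zygmund bootstrap on the equation $-\Delta u + V(x)u = b(x)g(v)$ to push $u$ into $W^{2,\theta}_{\loca}$ for all $\theta$, and then apply interior Schauder estimates to both linear equations. The paper states this in one line (``Schauder estimates and Theorem~\ref{p_regsub}''), and in fact the bootstrap for $u$ is already implicit in the proof of Theorem~\ref{p_regsub} itself, so your explicit iteration for $u$ is just re-deriving a step the paper has already absorbed.

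Two minor remarks. First, your phrase ``shrinking $\alpha'$ to $\alpha$'' is backwards---you mean \emph{raising} $\alpha'$ to $\alpha$---but the fix you propose (one more Schauder pass once $u,v\in C^{1,\gamma}_{\loca}\subset C^{0,1}_{\loca}$, so the composed right-hand sides are genuinely $C^{0,\alpha}_{\loca}$) is exactly right. Second, you can avoid that extra pass altogether: after the $L^p$ bootstrap you actually have $u,v\in W^{2,\theta}_{\loca}$ for every $\theta$, hence $u,v\in C^{1,\gamma}_{\loca}$ for every $\gamma<1$, so the compositions $f(x,u(x))$ and $b(x)g(v(x))$ are $C^{0,\alpha}_{\loca}$ on the first Schauder application and you land directly in $C^{2,\alpha}_{\loca}$.
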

	The novelty of Theorem \ref{p_regsub} also resides in the arguments used in its proof. Thanks to the regularity type results in  \cite{veryweak} on very weak solutions, our argument reduces the analysis of \cite{edersonGSH} to a direct approach involving a bootstrap argument.

	%
	%

	\subsection{Critical case: (\ref{HYPcrit})}\label{s_critical} To study this case, it is required that the nonlinearities are \textit{self-similar}. Precisely,
	\begin{flalign}\tag{$g_3$}\label{g_quatro}
		h (t) = 2^{\frac{N-mN}{m}j} h (2 ^{\frac{N}{m}j} t),\, \forall \, t \in \mathbb{R},\ j \in \mathbb{Z}.
	\end{flalign}
	One can see that if $h(t)$ satisfies \eqref{g_quatro} then
	\begin{equation*}
		H(t) = 2^{-Nj}H(2 ^{\frac{N}{m}j} t),\, \forall \, t \in \mathbb{R},\ j \in \mathbb{Z}.
	\end{equation*}
	We ask $f(x,t)\equiv f(t) \in C(\mathbb{R})$ and
	\begin{flalign}\tag{$f_1 ^\ast$}\label{H_um}
		f(t)=2 ^{\left(\frac{N-2m}{m}-N\right)j} f\left(2 ^{\frac{N-2m}{m}j} t \right),\, \forall \, t \in \mathbb{R},\ j \in \mathbb{Z}.
	\end{flalign}
	If $f(t)$ satisfies \eqref{H_um}, then \eqref{subDACERTO} holds and
	\begin{equation}\label{original_selfsimilar}
		F(t)=2 ^{-Nj} F\left(2 ^{\frac{N-2m}{m}j} t \right),\, \forall \, t \in \mathbb{R},\ j \in \mathbb{Z}.
	\end{equation}
	We describe the behavior of  \textit{self-similar} nonlinearities in Section \ref{s_selfsimilar}. 
	
	For the following results, we assume $b(x) \equiv 1,$ general oscillating nonlinearities, $V(x) \equiv V_\lambda(x) \equiv  -\lambda |x|^{-2},$ with $ \lambda \geq 0$ and
	\begin{equation*}
		\fbox{
			$
			L_\lambda(u) = -\Delta u -\lambda|x|^{-2} u.
			$
		}
	\end{equation*}
	
	\subsubsection{Existence results}
	
	\begin{theorem}\label{teo_mincrit}
		Assume \eqref{H_um}, \eqref{A-R} and that there exists $t_\ast > 0 $ such that $F(t_\ast) > 0$. 
		Then,  there exists a precompact minimizing sequence $( 2 ^{-\frac{N-2m}{m}j_k } u_k (2 ^{- j_k } \cdot ) ),$ $(j_k) \subset \mathbb{Z}$ for the infimum
		\begin{equation}\label{min}
			\Theta _{p,q,\lambda} = \inf \left\lbrace  \int_{\mathbb{R}^N}  |L_\lambda(u)|^m \dx : u \in  \mathcal{D}_\rad^{2,m} (\mathbb{R}^N),\ \int _{\mathbb{R}^N} F(u) \dx = 1 \right\rbrace,
		\end{equation}
		provided  $0 \leq \lambda < \Lambda _{N,m}^{1/m}$. Therefore, \eqref{min} is attained.
	\end{theorem}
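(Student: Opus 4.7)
The plan is to combine the discrete dilation invariance of the problem with the radial profile decomposition of Theorem~\ref{teo_profcrit} and a strict superadditivity argument. Let $(u_k)\subset \mathcal{D}_{\rad}^{2,m}(\mathbb{R}^N)$ be a minimizing sequence for $\Theta_{p,q,\lambda}$. Since $0\le \lambda<\Lambda_{N,m}^{1/m}$, the Hardy inequality recalled in the introduction shows that $u\mapsto \|L_\lambda u\|_m$ defines on $\mathcal{D}^{2,m}(\mathbb{R}^N)$ a norm equivalent to $\|\Delta u\|_m$, so $(u_k)$ is bounded. A direct computation shows that the discrete dilations
\[
(g_j u)(x):=2^{-\frac{N-2m}{m}j}\,u(2^{-j}x),\qquad j\in\mathbb{Z},
\]
leave $\int_{\mathbb{R}^N}|L_\lambda u|^m\dx$ invariant (the factor $|x|^{-2}$ in $L_\lambda$ scales consistently with $\Delta$); by \eqref{original_selfsimilar} they also preserve $\int_{\mathbb{R}^N} F(u)\dx$.

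Next I would apply Theorem~\ref{teo_profcrit} to $(u_k)$. In the radial setting the translations collapse, so one is left with profiles $w^{(i)}\in\mathcal{D}_{\rad}^{2,m}(\mathbb{R}^N)$ and pairwise divergent dilation indices $(j_k^{(i)})\subset\mathbb{Z}$, $|j_k^{(i)}-j_k^{(l)}|\to\infty$ for $i\ne l$, with the asymptotic decomposition $u_k=\sum_{i=1}^n g_{j_k^{(i)}}w^{(i)}+r_k^{(n)}$ and the norm splitting
\[
\|L_\lambda u_k\|_m^m=\sum_{i=1}^n\|L_\lambda w^{(i)}\|_m^m+\|L_\lambda r_k^{(n)}\|_m^m+o_k(1).
\]
A Br\'ezis--Lieb-type decoupling adapted to the self-similar $F$ --- leveraging the $m_\ast$-growth implied by \eqref{H_um}, the Ambrosetti--Rabinowitz condition \eqref{A-R}, and the asymptotic vanishing of the remainders $r_k^{(n)}$ in the critical Lebesgue norm --- then yields the dual splitting $1=\int_{\mathbb{R}^N}F(u_k)\dx\to \sum_i\alpha_i$, where $\alpha_i:=\int_{\mathbb{R}^N} F(w^{(i)})\dx$.

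To rule out dichotomy, for each profile with $\alpha_i>0$ I would rescale multiplicatively, picking $c_i>0$ such that $\int_{\mathbb{R}^N} F(c_i w^{(i)})\dx=1$ (which exists by continuity, since \eqref{A-R} forces $F$ to grow superlinearly at infinity). Testing $c_i w^{(i)}$ against the infimum then gives $\|L_\lambda w^{(i)}\|_m^m\ge \Theta_{p,q,\lambda}\,\varphi(\alpha_i)$ with $\varphi(\alpha):=c(\alpha)^{-m}$, and the superquadraticity $\mu>m$ in \eqref{A-R} should let one show that $\varphi$ is strictly superadditive on $(0,1]$ with $\varphi(1)=1$. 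Summing over $i$ and using $\sum_i\alpha_i=1$ yields
\[
\Theta_{p,q,\lambda}=\lim_k\|L_\lambda u_k\|_m^m\ge \Theta_{p,q,\lambda}\sum_i\varphi(\alpha_i),
\]
forcing exactly one $\alpha_{i_0}=1$, every other profile trivial, and $\|L_\lambda r_k^{(n)}\|_m\to 0$. Setting $j_k:=-j_k^{(i_0)}$, the rescaled sequence $v_k(x):=2^{-\frac{N-2m}{m}j_k}u_k(2^{-j_k}x)=g_{-j_k^{(i_0)}}u_k$ converges weakly to $w^{(i_0)}$; since $\|L_\lambda v_k\|_m\to \|L_\lambda w^{(i_0)}\|_m$ in the uniformly convex space $L^m$, the convergence upgrades to strong, giving the precompact minimizing sequence, and $w^{(i_0)}$ attains~\eqref{min}.

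The main obstacle is establishing the strict superadditivity of $\varphi$: because $F$ is only discretely (not continuously) self-similar, the constants $c(\alpha)$ do not obey a clean power law, so one has to exploit the superquadraticity in \eqref{A-R} (recall $\mu>m\mathcal{C}_2/\mathcal{C}_1\ge m$) to compare $c(\alpha)^{-m}$ across different values of $\alpha$ while absorbing the periodic correction coming from the discrete self-similarity. A secondary delicate point is the Br\'ezis--Lieb-style decoupling of $\int_{\mathbb{R}^N} F(u_k)\dx$ under only discrete scale invariance, which I would justify using the pointwise a.e.\ convergence machinery of Proposition~\ref{p_aeconv} together with the orthogonality of the $(j_k^{(i)})$.
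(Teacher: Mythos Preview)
Your outline has a genuine gap at the step where you claim the energy splitting
\[
\|L_\lambda u_k\|_m^m=\sum_{i=1}^n\|L_\lambda w^{(i)}\|_m^m+\|L_\lambda r_k^{(n)}\|_m^m+o_k(1).
\]
Theorem~\ref{teo_profcrit} does \emph{not} provide this: it gives only the subadditive control \eqref{seis.tres} in the critical Lebesgue norm $\|\cdot\|_p$, and the paper explicitly flags the Sobolev-norm version \eqref{aoutra} as an open question in the remark following Theorem~\ref{teo_profsub}. Without this splitting your dichotomy argument cannot start, because you have no lower bound on $\|L_\lambda u_k\|_m^m$ in terms of the profile energies. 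The paper bypasses this obstruction by first applying Ekeland's variational principle on the constraint set, producing a minimizing sequence that in addition satisfies $\mathcal{N}_\lambda'(u_k)-\mu_k\Phi'(u_k)\to 0$; this Palais--Smale information is exactly the hypothesis of Proposition~\ref{p_aeconv}, which then yields $L_\lambda(w_k)\to L_\lambda(w)$ a.e.\ for the dilated sequence $w_k=\hat d_k^{(n_0)}u_k$ and hence, via the classical Br\'ezis--Lieb lemma, the \emph{two-term} split $\|L_\lambda w_k\|_m^m=\|L_\lambda(w_k-w)\|_m^m+\|L_\lambda w\|_m^m+o_k(1)$. No full multi-bump splitting is ever needed or claimed. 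You invoke Proposition~\ref{p_aeconv} only for the $F$-decoupling, where it is neither needed (Proposition~\ref{prop_weakcrit} suffices) nor applicable (you lack the derivative condition).

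A second, independent point: your multiplicative rescaling $w^{(i)}\mapsto c_i w^{(i)}$ to rule out dichotomy is more delicate than necessary. Since $f$ is autonomous, the paper uses the \emph{spatial} rescaling $w\mapsto w(\delta^{1/N}\cdot)$, under which $\int F(w(\sigma\cdot))\dx=\sigma^{-N}\int F(w)\dx$ and $\|L_\lambda(w(\sigma\cdot))\|_m^m=\sigma^{2m-N}\|L_\lambda w\|_m^m$ hold exactly, with no appeal to \eqref{A-R} or to discrete self-similarity. This yields directly $\|L_\lambda w\|_m^m\ge\Theta_{p,q,\lambda}\,\delta^{1-2m/N}$ and the strict concavity of $t\mapsto t^{1-2m/N}$ gives the contradiction $1\ge \delta^{1-2m/N}+(1-\delta)^{1-2m/N}$. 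Your route via $\varphi(\alpha)=c(\alpha)^{-m}$ and the superquadraticity $\mu>m$ can be made to work when all $\alpha_i>0$, but it is less sharp (exponent $m/\mu$ instead of $1-2m/N$) and must separately address profiles with $\alpha_i\le 0$, which can occur because $F$ is allowed to change sign.
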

	Using a Pohozaev type identity for Hamiltonian systems (see Theorem \ref{p_pohozaev_id}), we are able to prove the next qualitative propriety for the minimizer of \eqref{min}.
	\begin{theorem}\label{teo_mincrit2}
		Under the assumptions of Theorem \ref{teo_mincrit}, if $w$ is a minimizer of \eqref{min}, then for some $\beta>0,$  $u=w(\cdot / \beta )$ is a ground state (strong) solution of Eq.~\eqref{P}. Moreover,  taking $v = | L_\lambda(u)|^{m-2}(L_\lambda(u)),$ the pair $(u,v) \in \mathcal{D}^{2,m}_{\rad}(\mathbb{R}^N) \times \mathcal{D}^{2,l}_{\rad}(\mathbb{R}^N)$ is a ground state (strong) solution of Syst.~\eqref{S} provided $g(t) \equiv |t|^{q-2}t,$ $b(x) \equiv 1$ and $V(x) = - \lambda |x|^{-2}.$
	\end{theorem}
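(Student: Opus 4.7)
The plan is to start from the minimizer $w\in\mathcal{D}^{2,m}_{\rad}(\mathbb{R}^N)$ produced by Theorem~\ref{teo_mincrit} and apply a Lagrange multiplier argument on the constraint $\int F(u)\dx=1$. This yields a $\mu\in\mathbb{R}$ with
\[
m\int|L_\lambda w|^{m-2}L_\lambda w\cdot L_\lambda\varphi\dx=\mu\int f(w)\varphi\dx
\]
for every radial $\varphi$; since $L_\lambda$ and $f$ are $O(N)$--invariant, the Palais symmetric criticality principle promotes it to all $\varphi\in\mathcal{D}^{2,m}(\mathbb{R}^N)$, giving $m\,L_\lambda(h(L_\lambda w))=\mu f(w)$ weakly with $h(t)=|t|^{m-2}t$. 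Testing with $w$ and combining with \eqref{A-R} (here $\mu_{AR}>m$ since $\mathcal{C}_1=\mathcal{C}_2=1$) produces $\mu\int f(w)w=m\Theta_{p,q,\lambda}>0$ and $\int f(w)w\geq\mu_{AR}>0$, hence $\mu>0$. I would then set $\beta:=(\mu/m)^{1/(2m)}$ and $u(x):=w(x/\beta)$. A direct computation using $L_\lambda u(x)=\beta^{-2}L_\lambda w(x/\beta)$ and the $(m-1)$--homogeneity of $h$ gives
\[
L_\lambda(h(L_\lambda u))(x)=\beta^{-2m}L_\lambda(h(L_\lambda w))(x/\beta)=\beta^{-2m}(\mu/m)f(w(x/\beta))=f(u(x)),
\]
so $u$ weakly solves \eqref{P}.

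To identify $u$ as a ground state, I would invoke the Pohozaev-type identity of Theorem~\ref{p_pohozaev_id}. Under the present hypotheses ($b\equiv1$, $V=V_\lambda$, $h(t)=|t|^{m-2}t$, and \eqref{subDACERTO} which follows from \eqref{H_um}) it specialises, for any weak solution $\tilde u$ of \eqref{P}, to
\[
\tfrac{N-2m}{m}\int|L_\lambda\tilde u|^m\dx=N\int F(\tilde u)\dx,
\]
so that $I(\tilde u)=\tfrac{2}{N}\int|L_\lambda\tilde u|^m\dx$. The Pohozaev identity also forces $\int F(\tilde u)>0$; rescaling $\tilde v(x):=\tilde u(x/\sigma)$ with $\sigma=(\int F(\tilde u))^{-1/N}$ puts $\tilde v$ in the admissible set of \eqref{min}, whence $\int|L_\lambda\tilde u|^m\geq(\int F(\tilde u))^{(N-2m)/N}\Theta_{p,q,\lambda}$. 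Combined with Pohozaev this yields a universal lower bound for $\int|L_\lambda\tilde u|^m$, and the key algebraic consistency $\beta^{2m}=\mu/m=(N-2m)\Theta_{p,q,\lambda}/(mN)$ (the last equality being Pohozaev applied to $w$ itself) shows equality for $u=w(\cdot/\beta)$. Thus $u$ minimises $I$ over all nontrivial critical points, i.e.\ $u$ is a ground state of \eqref{P}.

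For the system I would set $v:=h(L_\lambda u)=|L_\lambda u|^{m-2}L_\lambda u$. By the remark following the definitions, as soon as $v\in\mathcal{D}^{2,l}_{\rad}(\mathbb{R}^N)$, the identity $G(h(t))=t\,h(t)-H(t)$ delivers $I(u)=J(u,v)$ and the bijective correspondence $J'(u,v)\cdot(\varphi,0)=I'(u)\varphi$; the membership itself is extracted from the equation $-\Delta v=f(u)+\lambda|x|^{-2}v$ combined with the regularity and Hardy estimates encoded in Theorem~\ref{c_regularity_crit_0}, using that $f(u)\in L^l$ by \eqref{subDACERTO} and the critical Sobolev embedding. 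Since weak and strong solutions coincide under these regularity results, the minimality of $u$ among nontrivial critical points of $I$ transfers to the minimality of $(u,v)$ among nontrivial critical points of $J$, so $(u,v)$ is a ground state (strong) solution of \eqref{S}.

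The principal obstacle I anticipate is twofold. First, a careful derivation of the Pohozaev identity in Theorem~\ref{p_pohozaev_id} and its reduction to the clean form $\tfrac{N-2m}{m}\int|L_\lambda\tilde u|^m=N\int F(\tilde u)$ requires controlling integrations by parts against the singular weight $|x|^{-2}$ in $\mathcal{D}^{2,m}_{\rad}$, which is precisely the border of admissibility given by the Hardy constant $\Lambda_{N,m}^{1/m}$. Second, verifying $v\in\mathcal{D}^{2,l}_{\rad}(\mathbb{R}^N)$ demands a global (not merely local) $L^l$--bound on $\Delta v$, which must be bootstrapped from the self-similarity of $f$, the radial regularity, and the sharp Hardy inequality; the algebraic scaling computations in the identification of $\beta$ are routine once this analytic input is in place.
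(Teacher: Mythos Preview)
Your proposal is correct and follows essentially the same route as the paper: Lagrange multiplier on the constraint, Palais symmetric criticality, rescaling via $\beta=(\mu/m)^{1/(2m)}$ with the exact value of $\mu$ pinned down by the Pohozaev identity (Corollary~\ref{cor_pohozaev}), and then the ground-state comparison by rescaling an arbitrary solution $\psi$ onto the constraint manifold. One minor correction: the membership $v\in\mathcal{D}^{2,l}_{\rad}(\mathbb{R}^N)$ is obtained from Theorem~\ref{p_regcrit}, not Theorem~\ref{c_regularity_crit_0} (the latter concerns local $L^p$ regularity near the origin in the case $\lambda=0$); the argument you sketch---$f(u)\in L^l$ by \eqref{constant_ss} and the critical embedding, hence $L_\lambda v\in L^l$---is precisely the content of Theorem~\ref{p_regcrit}.
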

	Theorem \ref{teo_mincrit} deals with the existence of ground states for \eqref{S} using constrained minimization requiring $g(t) \equiv |t|^{q-2}t,$ see \cite{lionscompcase1,lionscompcase2}. Next, we deal with a more general class of nonlinearities $g(t)$ by using minimax techniques.
	\begin{theorem}\label{GS_crit}
		Assume \eqref{H_um}, \eqref{A-R}, \eqref{g_dois}--\eqref{g_quatro} and that there exists $t_\ast > 0 $ such that $F(t_\ast) > 0.$ Then Eq. \eqref{P} has a ground state (strong) solution, provided 
		$b(x)\equiv 1,$ $V(x) \equiv  -\lambda |x|^{-2}$ and $0 \leq \lambda < \Lambda _{N,m}^{1/m}$. Moreover, setting $v = h \left(L_\lambda(u) \right),$ the pair $(u,v) \in \mathcal{D}^{2,m}_{\rad}(\mathbb{R}^N) \times \mathcal{D}^{2,l}_{\rad}(\mathbb{R}^N)$ is a ground state (strong) solution for Syst.~\eqref{S}.
	\end{theorem}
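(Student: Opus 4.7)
The plan is to produce a ground state of \eqref{P} as a mountain pass critical point of $I$ restricted to $E = \mathcal{D}^{2,m}_{\rad}(\mathbb{R}^N)$, and then recover the system solution via the equivalence recalled in the introductory remarks. Since $I$ is $O(N)$-invariant (because $b \equiv 1$ and $V_\lambda$ is radial), the Palais symmetric criticality principle reduces the problem to finding critical points of $I|_E$. For the mountain pass geometry I would combine the lower bound $H(t) \geq (\mathcal{C}_1/m) |t|^m$ coming from \eqref{g_dois} with the Hardy inequality (using $\lambda < \Lambda_{N,m}^{1/m}$) to obtain $\int H(L_\lambda u) \dx \gtrsim \|\Delta u\|_m^m$; the self-similarity \eqref{H_um} forces $|f(t)| \lesssim |t|^{m_\ast-1}$, so $\int F(u) \dx$ is controlled by $\|\Delta u\|_m^{m_\ast}$ via Sobolev. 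Since $m_\ast > m$, this gives small-ball positivity. The positivity of $F$ at some $t_\ast$, amplified by \eqref{A-R}, yields along a fixed radial bump scaled suitably a path on which $I$ becomes negative.

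Next I extract a Palais-Smale sequence $(u_k) \subset E$ at the mountain pass level $c > 0$ and show it is bounded via the standard combination $\mu I(u_k) - I'(u_k) \cdot u_k$: the identity $G(h(t)) = t h(t) - H(t)$, condition \eqref{g_dois}, and the hypothesis $\mu > m\mathcal{C}_2/\mathcal{C}_1$ produce a strictly positive coefficient in front of $\int |L_\lambda u_k|^m \dx$, yielding a uniform bound on $\|\Delta u_k\|_m$ after one more application of Hardy. To recover compactness I exploit the discrete dilation invariance given by \eqref{g_quatro} and \eqref{H_um}: the map $u \mapsto 2^{-(N-2m)j/m} u(2^{-j}\cdot)$, $j \in \mathbb{Z}$, preserves $I$ and leaves $I'$ invariant in the dual norm. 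Replacing $u_k$ by a rescaled sequence $\tilde u_k$ with $j_k$ chosen to fix a normalization (say forcing $\int_{B_1} |\tilde u_k|^{m_\ast} \dx$ to sit in a compact subinterval of $(0,\infty)$, which is possible because $c > 0$ rules out vanishing after rescaling), the weak profile decomposition of Theorem \ref{teo_profcrit}, whose translational component collapses in the radial setting, guarantees a nontrivial weak limit $\tilde u_k \rightharpoonup u$ in $E$.

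To verify $I'(u) = 0$ I would invoke Proposition \ref{p_aeconv}, which (crucially using \eqref{g_tres}) gives $h(L_\lambda \tilde u_k) \to h(L_\lambda u)$ almost everywhere. This, together with the growth \eqref{h_cresci} and standard weak-$L^l$ convergence, allows passage to the limit in the quasilinear term against test functions $\varphi \in C^\infty_0(\mathbb{R}^N)$; the right-hand side $\int f(\tilde u_k)\varphi \dx$ is handled by the compact radial embedding into $L^r_{\loca}$ for $r < m_\ast$. Thus $u$ is a nontrivial weak solution of \eqref{P}, and Fatou's lemma applied to the representation $I(u) = \mu^{-1}[\mu I(u) - I'(u)\cdot u] + \mu^{-1} I'(u)\cdot u$ yields $I(u) \leq c$, so $u$ attains the ground-state energy (one checks that $c$ coincides with the infimum of $I$ over nontrivial critical points by the standard mountain pass argument combined with the Pohozaev identity of Theorem \ref{p_pohozaev_id} to exclude critical points below the mountain pass level). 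The strong solution upgrade comes from Theorem \ref{c_regularity_crit_0}, and setting $v = h(L_\lambda u)$ yields the corresponding ground state $(u,v) \in \mathcal{D}^{2,m}_{\rad}(\mathbb{R}^N) \times \mathcal{D}^{2,l}_{\rad}(\mathbb{R}^N)$ of \eqref{S} by the equivalence recalled after the definitions. The principal obstacle I anticipate is the rescaling/nontriviality step together with the almost-everywhere convergence of $h(L_\lambda \tilde u_k)$: because $L_\lambda$ is a linear but (for $\lambda > 0$) singular second-order operator sitting inside the nonlinearity $h$, adapting the Brezis-Lieb and concentration-compactness toolbox in Proposition \ref{p_aeconv} is the technical heart of the argument.
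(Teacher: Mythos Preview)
Your overall strategy matches the paper's: mountain pass geometry and bounded Palais--Smale sequence (packaged there as Proposition~\ref{p_geoMP}), profile decomposition in the radial space (Theorem~\ref{teo_profcrit} together with Proposition~\ref{prop_radial}), almost everywhere convergence of $h(L_\lambda u_k)$ via Proposition~\ref{p_aeconv}, and a Fatou argument. Two points deserve correction.

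\textbf{Ground state step.} Your claim that ``$c$ coincides with the infimum of $I$ over nontrivial critical points'' and that the Pohozaev identity ``excludes critical points below the mountain pass level'' is not justified, and the paper does \emph{not} proceed this way. The Pohozaev identity (Corollary~\ref{cor_pohozaev}) yields, for any critical point $\psi$, the representation $I(\psi)=\tfrac{2}{N}\int h(L_\lambda\psi)L_\lambda\psi\,\mathrm{d}x>0$, but this only shows $\mathcal{G}_{\mathcal{S}}>0$, not $\mathcal{G}_{\mathcal{S}}\ge c$. Because $H$ is not a pure power, neither the fibering map $t\mapsto I(t\psi)$ nor the dilation path $\sigma\mapsto I(\psi(\cdot/\sigma))$ need attain its maximum at the critical point, so the usual Nehari/Pohozaev-manifold argument that identifies $c$ with $\mathcal{G}_{\mathcal{S}}$ is not available. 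The paper instead runs the whole machinery a second time on a \emph{minimizing sequence of critical points} $(u_k)$ at level $\mathcal{G}_{\mathcal{S}}$: since each $u_k$ is already a critical point, Corollary~\ref{cor_pohozaev} gives $I(u_k)=\tfrac{2}{N}\int h(L_\lambda u_k)L_\lambda u_k\,\mathrm{d}x$ with nonnegative integrand, and Fatou applied to this representation (after extracting a nontrivial profile $w^{(n_0)}$ as before) yields $I(w^{(n_0)})\le\mathcal{G}_{\mathcal{S}}$. Your $\mu$-representation Fatou argument on the mountain pass sequence gives only $I(u)\le c$, which is not enough by itself.

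\textbf{Regularity citation.} The upgrade to a strong solution and the membership $v=h(L_\lambda u)\in\mathcal{D}^{2,l}_{\rad}(\mathbb{R}^N)$ follow from Theorem~\ref{p_regcrit}, which applies for all $0\le\lambda<\Lambda_{N,m}^{1/m}$. Theorem~\ref{c_regularity_crit_0} requires $\lambda=0$ and additional structural hypotheses, so it is not the correct reference here.
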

	In the context of Theorem \ref{GS_crit}, we point out that under additional assumptions, if $\lambda = 0,$ we prove in Theorem \ref{c_regularity_crit_0} that $(u,v) \in C^{2,\alpha}_{\loca} (\mathbb{R}^N) \times C^{2,\alpha}_{\loca} (\mathbb{R}^N).$ Also, Theorem \ref{GS_crit} provides the existence of ground states solutions for \eqref{S} for a class of oscillating critical nonlinearities $f(t)$ and $g(t)$ and singular potentials. To the authors' knowledge, Theorem \ref{GS_crit} is the first result dealing with Hamiltonian systems involving general nonlinearities with critical growth since J. Serrin and H. Zou \cite{serrinzou}, whereby topological methods were proved the existence of radial solutions requiring $C^2$ regularity.
	
	%
	%

	\subsubsection{Regularity results} In a similar fashion as made for problems with subcritical nonlinearities, we also have regularity results for problems involving critical growth by assuming,
	\begin{flalign}\tag{$f_2 ^\ast$}\label{constant_ss}
		\exists \, \mathcal{C}_{\ast}>0\text{ s.t. }|f(x,t)| \leq\mathcal{C}_{\ast} |t|^{p-1},\ \text{ for a.e. }x \in \mathbb{R}^N,\ \forall t\in \mathbb{R}.
	\end{flalign}
	
	\begin{theorem}\label{p_regcrit}
		Suppose \eqref{g_dois}, \eqref{constant_ss} \eqref{g_cresci}, \eqref{h_cresci}, $0<\beta \leq b(|x|)\in L^\infty (\mathbb{R}^N),$ $V(x) \equiv  -\lambda |x|^{-2}$ and $0 \leq \lambda < \Lambda _{N,m}^{1/m}.$ Let $u \in \mathcal{D}^{2,m}_\rad (\mathbb{R}^N)$ be a weak solution for Eq.~\eqref{P}, and take $v := h \left( (b(|x|))^{-1} L_\lambda (u) \right).$ Then $v\in \mathcal{D}^{2,l}_\rad (\mathbb{R}^N)$ and the pair $(u,v)$ is a radial strong solution for Syst.~\eqref{S}.
	\end{theorem}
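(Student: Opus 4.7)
\textbf{Proof plan for Theorem \ref{p_regcrit}.}

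First I would establish the baseline integrability of $v$ and of $f(x,u)$. Since $u\in \mathcal{D}^{2,m}_{\rad}(\mathbb{R}^N)$, the Rellich--Hardy inequality together with $0\le \lambda<\Lambda_{N,m}^{1/m}$ shows that $|x|^{-2}u\in L^m(\mathbb{R}^N)$, and hence $L_\lambda(u)=-\Delta u-\lambda|x|^{-2}u\in L^m(\mathbb{R}^N)$. Combining the pointwise bound \eqref{h_cresci} with the lower bound $b(|x|)\ge\beta>0$ gives $|v|\le C|L_\lambda(u)|^{m-1}$, which integrates to $v\in L^q(\mathbb{R}^N)$ in view of $q=m/(m-1)$. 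For the nonlinearity, the critical Sobolev embedding $\mathcal{D}^{2,m}(\mathbb{R}^N)\hookrightarrow L^{p}(\mathbb{R}^N)$ with $p=Nm/(N-2m)$, combined with \eqref{constant_ss} and the identity $l(p-1)=p$ (which is precisely the critical hyperbola), yields $f(\cdot,u)\in L^l(\mathbb{R}^N)$.

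Next I would recast the weak formulation of \eqref{P} as a very weak equation for $v$. The hypothesis $I'(u)=0$ reads
\[
\int_{\mathbb{R}^N} v\,L_\lambda(\varphi)\,\dx = \int_{\mathbb{R}^N} f(x,u)\varphi\,\dx \qquad \text{for every } \varphi\in \mathcal{D}^{2,m}_{\rad}(\mathbb{R}^N),
\]
and by the Palais principle of symmetric criticality together with the density of smooth radial functions compactly supported away from $0$, this identity persists for every $\varphi\in C^{\infty}_0(\mathbb{R}^N\setminus\{0\})$. Hence $v$ is a very weak solution of $L_\lambda v=f(x,u)$ on $\mathbb{R}^N\setminus\{0\}$, in the sense of \cite{veryweak}.

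The technical core is to upgrade this very weak identity to $v\in \mathcal{D}^{2,l}_{\rad}(\mathbb{R}^N)$, and this is the step I expect to be the main obstacle. My strategy is to invoke the regularity theory for very weak solutions of \cite{veryweak} applied to the operator $L_\lambda$ on the radial sector, where it degenerates into a one-dimensional weighted problem whose Calder\'on--Zygmund estimates are available; the datum $f(\cdot,u)\in L^l$ then propagates through the linear solver $L_\lambda^{-1}\colon L^l_{\rad}\to \mathcal{D}^{2,l}_{\rad}$, whose boundedness rests on a dual Rellich--Hardy bound at the exponent $l$, which is compatible with the admissible range $0\le\lambda<\Lambda_{N,m}^{1/m}$ through the critical hyperbola. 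Uniqueness of the very weak solution (coming from the coercivity of $L_\lambda$ and from $v\in L^q$) identifies $v$ with this canonical representative, so $v\in \mathcal{D}^{2,l}_{\rad}(\mathbb{R}^N)$. The singularity of $V_\lambda$ at the origin is the delicate point; the radial reduction plus the subcritical use of the Hardy constant is what makes it tractable.

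Finally, with $v\in \mathcal{D}^{2,l}_{\rad}(\mathbb{R}^N)$, the equation $-\Delta v-\lambda|x|^{-2}v=f(x,u)$ holds a.e.\ in $\mathbb{R}^N$. On the other hand, applying $g$ to the defining identity $v=h(b^{-1}L_\lambda(u))$ gives $b(|x|)g(v)=L_\lambda(u)$ pointwise a.e., i.e., $-\Delta u-\lambda|x|^{-2}u=b(|x|)g(v)$ a.e. Both equations of \eqref{S} therefore hold almost everywhere, so $(u,v)$ is a radial strong solution, completing the proof.
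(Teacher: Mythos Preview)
Your proposal diverges from the paper's proof precisely at the step you flag as the main obstacle. You attempt to treat $L_\lambda$ globally and appeal to a regularity/solvability theory for very weak solutions of $L_\lambda v=f(x,u)$; but the reference \cite{veryweak} covers nonhomogeneous equations with bounded uniformly elliptic coefficients, not operators with singular potentials like $-\lambda|x|^{-2}$, so invoking it directly for $L_\lambda$ is not justified. The ``one-dimensional weighted Calder\'on--Zygmund'' step and the uniqueness claim (``coercivity of $L_\lambda$ and $v\in L^q$'') are left vague; making them rigorous would require proving a norm equivalence on $\mathcal{D}^{2,l}_{\rad}$ analogous to Proposition~\ref{p_norma_comparar}, constructing the inverse $L_\lambda^{-1}:L^l_{\rad}\to\mathcal{D}^{2,l}_{\rad}$, and then showing density of $\{L_\lambda\varphi:\varphi\in C^\infty_{0,\rad}(\mathbb{R}^N\setminus\{0\})\}$ in $L^m_{\rad}$ to identify $v$ with $L_\lambda^{-1}f(\cdot,u)$. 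None of this is carried out.

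The paper sidesteps the whole obstacle by localizing and moving the singular term to the right-hand side. On any smooth bounded $\Omega$ with $0\notin\overline{\Omega}$, $v$ is a very weak solution of $-\Delta v=\hat f(x):=f(|x|,u)+\lambda|x|^{-2}v$. The key ingredient you omit is Lemma~\ref{l_radial}: the radial pointwise decay gives $u\in L^\infty(\Omega)$, hence $f(\cdot,u)\in L^\infty(\Omega)$, and since $|x|^{-2}$ is bounded on $\Omega$ one gets $\hat f\in L^q(\Omega)$. Now \cite{veryweak} applies cleanly to the standard Laplacian and yields $v\in W^{2,q}_{\loca}(\Omega)$, so $(u,v)$ is a strong solution away from the origin and $L_\lambda v=f(x,u)$ holds a.e.\ in $\mathbb{R}^N$. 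One then simply computes $\|L_\lambda v\|_l^l=\|f(\cdot,u)\|_l^l\leq C\|u\|_p^p<\infty$ and invokes the characterization of Corollary~\ref{c_charaD} (at exponent $l$) to conclude $v\in\mathcal{D}^{2,l}_{\rad}$. In short, the paper never needs a regularity theory for $L_\lambda$ itself: second-order differentiability of $v$ comes from the ordinary Laplacian on domains away from the singularity, and global membership in $\mathcal{D}^{2,l}_{\rad}$ then follows from the algebraic characterization.
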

	Theorem \ref{p_regcrit} explores the fact that solutions lie in a radial Sobolev space and have some local regularity and decay at infinity (see Lemma \ref{l_radial}). However, in $x=0,$ one does not have such a priori regularity even in the case that $\lambda =0.$ In order to prove further smoothness in neighborhoods of zero, we have to improve and complement some results of \cite{hulshof,vandervorst_best} for general nonlinearities and Banach spaces. We mention that in \cite{hulshof,vandervorst_best}, the Hilbert structure of function spaces or the $q=2$ was crucial in their argument.
\begin{theorem}\label{c_regularity_crit_0}
Under the conditions of Theorem \ref{p_regcrit} and $\lambda = 0,$ we have $u,v \in L^p_{\loca } (\mathbb{R}^N),$ for all $p\geq 1,$ provided one of the following assumptions is satisfied:
\begin{flushleft}
$\mathrm{(i)}$ $p,q>2$;\\
$\mathrm{(ii)}$ $h(t)$ is strictly increasing, $q\leq 2$ and there exists $\hat{C}_\ast >0$ such that
		\begin{equation}\label{f_porbaixo}\tag{$\hat{f}_\ast$}
			|f(x,t)| \geq \hat{C}_\ast |t|^{p-1},\text{ for a.e. }x \in \mathbb{R}^N,\ \forall \ t\in \mathbb{R};
		\end{equation}
$\mathrm{(iii)}$ $f(|x|,t) \equiv \hat{b}(|x|) \hat{f}(t),$ $p \leq 2$ and interchange the assumptions of $f(|x|,t)$ by the ones made on $b(|x|)g(t),$ more precisely: $\hat{f}(t)$ is a nondecreasing homeomorphism with $\hat{h}(t) := \hat{f}^{-1}(t)$ satisfyng \eqref{g_dois}, \eqref{g_cresci}, \eqref{h_cresci}, where $\hat{f}(t)$ and $\hat{h}(t)$ replaces $g(t)$ and $h(t),$ respectively. $\hat{b}(|x|)$ belongs to $L^\infty (\mathbb{R}^N),$ with $0<\hat{\beta } \leq \hat{b}(x)$ and writing $\hat{g}(x,t) = b(x)g(t),$ the function $\hat{g}$ verifies \eqref{constant_ss}, with $f$ replacing $\hat{g}.$
\end{flushleft}
\end{theorem}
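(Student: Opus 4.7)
The plan is to establish the local $L^s$-integrability of $u$ and $v$ at the origin (the only possibly singular point) via a Brezis--Kato type bootstrap, adapted to each of the three hypotheses. Away from $x=0$, Lemma~\ref{l_radial} combined with the interior regularity already contained in Theorem~\ref{p_regcrit} shows that both $u$ and $v$ are bounded on every compact set disjoint from the origin. Thus it suffices to prove that $u, v \in L^s(B_{r})$ for arbitrarily large $s$ and some fixed small $r>0$.

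I would work with the linearized form
\begin{equation*}
-\Delta u = V_1(x)\,v, \qquad -\Delta v = V_2(x)\,u,
\end{equation*}
with $V_1(x) := b(x) g(v(x))/v(x)$ and $V_2(x) := f(x,u(x))/u(x)$, so that $|V_1| \leq c_0 |v|^{q-2}$ and $|V_2| \leq \mathcal{C}_\ast |u|^{p-2}$. The initial inputs are $u \in L^p_{\loca}$ and $v \in L^q_{\loca}$, coming from the continuous embeddings $\mathcal{D}^{2,m}_{\rad} \hookrightarrow L^{m_\ast} = L^p$ and $\mathcal{D}^{2,l}_{\rad} \hookrightarrow L^{l_\ast} = L^q$, where the identities $m_\ast = p$, $l_\ast = q$ follow from the critical hyperbola identity $1/p + 1/q = (N-2)/N$.

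For case~\textbf{(i)}, since the exponents $p-2$ and $q-2$ are nonnegative, the bounds on $V_1$, $V_2$ give $V_1 \in L^{q/(q-2)}_{\loca}$ and $V_2 \in L^{p/(p-2)}_{\loca}$, integrabilities that place us in the Brezis--Kato range. Testing the first equation with $\chi^2 u|u|^{2\gamma-2}$ and the second with $\chi^2 v|v|^{2\gamma-2}$, for a smooth cutoff $\chi \in C_0^\infty(B_{2r})$ equal to $1$ on $B_r$, splitting each $V_i$ into a small part in its natural Lebesgue space plus a bounded remainder (absolute continuity of the integral), and using Sobolev's inequality $\mathcal{D}^{1,2}\hookrightarrow L^{2^*}$ to absorb the small pieces, a coupled Moser iteration in $\gamma$ yields $u, v \in L^s_{\loca}$ for every $s \geq 1$.

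For cases~\textbf{(ii)} and~\textbf{(iii)}, which are mirror images under the interchange $u \leftrightarrow v$, $f \leftrightarrow bg$, $p \leftrightarrow q$, it suffices to treat (ii). With $q \leq 2$, the potential $V_1$ may blow up as $v \to 0$ and the Brezis--Kato splitting is unavailable. In exchange, the strict monotonicity of $h$ together with \eqref{g_dois} gives pointwise two-sided control $|v| \sim |b^{-1}(-\Delta u)|^{m-1}$, so any $L^s$-bound on $-\Delta u$ is promoted to an $L^{s(m-1)}$-bound on $v$; symmetrically, \eqref{f_porbaixo} yields $|u|^{p-1} \lesssim |{-\Delta v}|$, promoting $L^s$-bounds on $-\Delta v$ to $L^{s/(p-1)}$-bounds on $u$. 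Combining these inversions with Calder\'on--Zygmund $L^s$-estimates applied to each of the two equations in turn, each pair of steps strictly improves the integrability exponent; iterating, $s \to \infty$.

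The main obstacle is precisely cases (ii) and (iii): the classical Brezis--Kato splitting fails when one of $p, q$ is $\leq 2$, and it must be replaced by the multiplicative two-sided bounds coming from monotonicity of $h$ (resp.\ $\hat h$) and the sharp lower growth \eqref{f_porbaixo}. The delicate bookkeeping is to check that the combined Calder\'on--Zygmund/inversion step strictly raises the integrability exponent rather than recycling it, so that the iteration indeed terminates above any prescribed $s \geq 1$ in finitely many steps; this uses that the critical hyperbola is the only fixed point of the naive one-step iteration, while the two-sided pointwise control introduced via $h$ and \eqref{f_porbaixo} escapes that fixed point.
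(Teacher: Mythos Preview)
Your treatment of case~(i) is fine and matches the paper: the coupled Brezis--Kato/Moser iteration with the splitting of $V_1,V_2$ into small-$L^{N/2}$-type plus bounded pieces is exactly the Hulshof--van der Vorst argument the paper cites.

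For cases~(ii) and~(iii), however, your proposed bootstrap does \emph{not} escape the critical fixed point. The two-sided pointwise controls you invoke carry no new information: from $|v|\sim|-\Delta u|^{m-1}$ one gets $v\in L^{t}\Longleftrightarrow -\Delta u\in L^{t(m-1)}$, and from \eqref{f_porbaixo} one gets $-\Delta v\in L^{s}\Rightarrow u\in L^{s(p-1)}$. Starting from the natural exponents $u\in L^{p}$, $v\in L^{q}$, every combination of these inversions with Calder\'on--Zygmund plus Sobolev returns precisely $u\in L^{p}$, $v\in L^{q}$ (check: $l(p-1)=p$, $l_\ast=q$, $q/(q-1)=m$, $m_\ast=p$). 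The ``delicate bookkeeping'' you flag as the main issue is in fact fatal: the exponent map is the identity on the critical hyperbola, and pointwise lower bounds on the nonlinearities do not change that.

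The paper's route for (ii) is entirely different and is what actually uses \eqref{f_porbaixo}. One argues by contradiction: if $|u(x)|\to\infty$ as $x\to0$, then \eqref{f_porbaixo} forces the linearized coefficient $a(x)=f(x,u)/u$ to blow up near $0$ (here $p>2$ is essential), so in the Brezis--Kato splitting $f(x,u)=q_\varepsilon u+f_\varepsilon$ with $\|q_\varepsilon\|_{p/(p-2)}<\varepsilon$ one can arrange $f_\varepsilon\equiv0$ on a small ball $B_\delta$. The key analytic input is a representation of the fourth-order solution operator via Riesz potentials, $\mathcal{E}^{-1}(\xi)=\mathcal{R}_m\bigl[b\,g(\mathcal{R}_l[\xi])\bigr]$, together with Hardy--Littlewood--Sobolev; this yields $\|\mathcal{E}^{-1}(q_\varepsilon w\,\chi_{B_\delta})\|_{p}\le C_\delta\,\varepsilon^{q-1}\|w\,\chi_{B_\delta}\|_{p}^{q-1}$, and taking $\varepsilon$ small forces $u\equiv0$ on $B_\delta$, a contradiction. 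In short, the role of \eqref{f_porbaixo} is not to feed a bootstrap but to localize the support of the bounded remainder $f_\varepsilon$ away from the singular point, so that the smallness of $q_\varepsilon$ can close a fixed-point/contraction estimate for the \emph{nonlinear} inverse operator. Case~(iii) then follows by the symmetry you noted.
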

\begin{corollary}
If $ f(t),$ $g(t) \in C^{0,\alpha }_{\loca } (\mathbb{R}),$ for some $0<\alpha <1,$ then the solutions $(u,v)$ of Theorems \ref{teo_mincrit2} and \ref{GS_crit} belongs to $( C^{2,\alpha}_{\loca} (\mathbb{R}^N \setminus \{0\}) )^2.$ Moreover, if the conditions of Theorem \ref{c_regularity_crit_0} hold, then $(u,v)\in ( C^{2,\alpha}_{\loca} (\mathbb{R}^N) ) ^2.$
\end{corollary}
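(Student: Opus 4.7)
The plan is to set up a standard bootstrap argument starting from the radial second-order Sobolev regularity guaranteed by Theorem \ref{p_regcrit}, and then apply $L^p$ Calderón--Zygmund theory followed by Schauder estimates on each of the two equations of Syst.~\eqref{S}. The key observation is that with $V(x)\equiv V_\lambda(x) = -\lambda|x|^{-2}$ and $b(x)\equiv 1$, the coefficients are $C^\infty$ on $\mathbb{R}^N\setminus\{0\}$, so all the obstructions to smoothness concentrate at the origin.

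First I would record, using Theorem \ref{p_regcrit}, that the pair $(u,v) \in \mathcal{D}^{2,m}_{\rad}(\mathbb{R}^N)\times\mathcal{D}^{2,l}_{\rad}(\mathbb{R}^N)$ is a radial strong solution. By the (anticipated) Lemma \ref{l_radial}, radial functions in these homogeneous Sobolev spaces are locally bounded outside every neighborhood of the origin, so $u,v \in L^\infty_{\loca}(\mathbb{R}^N\setminus\{0\})$. Since $f,g$ are continuous, $f(u)$ and $g(v)$ are then in $L^\infty_{\loca}(\mathbb{R}^N\setminus\{0\})$, and hence the right-hand sides of
\begin{equation*}
-\Delta v = f(u) - V_\lambda(x) v, \qquad -\Delta u = g(v) - V_\lambda(x) u
\end{equation*}
are in $L^\infty_{\loca}(\mathbb{R}^N\setminus\{0\})$. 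Standard interior $L^p$ estimates for $-\Delta$ give $u,v \in W^{2,p}_{\loca}(\mathbb{R}^N\setminus\{0\})$ for every $p<\infty$, and Morrey's embedding then produces $u,v \in C^{1,\alpha'}_{\loca}(\mathbb{R}^N\setminus\{0\})$ for every $\alpha' \in (0,1)$. Since $f,g \in C^{0,\alpha}_{\loca}(\mathbb{R})$, composition with the now locally Lipschitz functions $u,v$ yields $f(u), g(v) \in C^{0,\alpha}_{\loca}(\mathbb{R}^N\setminus\{0\})$; together with $V_\lambda \in C^\infty(\mathbb{R}^N\setminus\{0\})$ and $V_\lambda u, V_\lambda v \in C^{1,\alpha'}_{\loca}(\mathbb{R}^N\setminus\{0\})$, the Schauder interior estimates applied to each Poisson-type equation deliver $u,v \in C^{2,\alpha}_{\loca}(\mathbb{R}^N\setminus\{0\})$, which is the first assertion.

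For the second assertion, assume the hypotheses of Theorem \ref{c_regularity_crit_0} (in particular $\lambda = 0$, so $V\equiv 0$ globally). That theorem provides $u,v \in L^p_{\loca}(\mathbb{R}^N)$ for every $p\geq 1$, including on neighborhoods of the origin. Using the growth conditions \eqref{constant_ss} and \eqref{g_cresci} we obtain $f(u), g(v) \in L^p_{\loca}(\mathbb{R}^N)$ for every $p\geq 1$. Calderón--Zygmund together with Morrey then produces $u,v \in C^{1,\alpha'}_{\loca}(\mathbb{R}^N)$ for all $\alpha' \in (0,1)$, and composition with the Hölder $f$ and $g$ gives $f(u), g(v) \in C^{0,\alpha}_{\loca}(\mathbb{R}^N)$; Schauder interior estimates for $-\Delta$ (no singular coefficient now) promote this to $u,v \in C^{2,\alpha}_{\loca}(\mathbb{R}^N)$.

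The main obstacle I anticipate is not the bootstrap itself, which is routine once the base $L^\infty_{\loca}$ (respectively $L^p_{\loca}$) regularity is in hand, but rather carefully invoking the right embedding/regularity statement from the paper to launch the iteration: for the first assertion one needs the radial-decay ingredient of Lemma \ref{l_radial} to avoid any direct treatment of the Hardy singularity, and for the second assertion one must appeal to the global $L^p_{\loca}$ conclusion of Theorem \ref{c_regularity_crit_0}, whose hypotheses (i)--(iii) cover precisely the cases left open by the radial embedding. Once those inputs are secured, the Schauder step is a direct consequence of $f,g \in C^{0,\alpha}_{\loca}$ applied to the two scalar Poisson equations that constitute Syst.~\eqref{S}.
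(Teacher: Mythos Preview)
Your proposal is correct and follows exactly the approach the paper has in mind: the corollary is stated without an explicit proof because it is meant as a direct consequence of Theorem~\ref{p_regcrit} (together with Lemma~\ref{l_radial}) and Theorem~\ref{c_regularity_crit_0}, combined with the standard $L^p$/Schauder bootstrap that the paper already spells out in the subcritical setting (see the sentence inside the proof of Theorem~\ref{p_regsub} and the corollary following Theorem~\ref{GS_subcrit}). Your write-up simply makes those routine steps explicit, and there is no discrepancy in strategy.
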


	%
	%
	\subsection{Nonexistence results} 
	Our following result is a Pohozaev identity for  \eqref{S}, which is a key to prove our general nonexistence results. Precisely, we show that some solutions of \eqref{S} must satisfy a specific integral identity, which also holds for smooth bounded domains (see \cite{Mitidieri-1993,vandervorst_prim,puccin-serrin}). In its proof, we use a “local-to-global” cutoff argument. This identity is crucial in our argument to control the energy level of the quasilinear term of the energy functional.
	%
	%
	\begin{theorem}\label{p_pohozaev_id}
		Let $(u,v) \in W_{\loca}^{2, q/(q-1)}(\mathbb{R}^N) \times W_{\loca}^{2, p/(p-1)}(\mathbb{R}^N )$ be a strong solution of \eqref{S} satisfying  \eqref{condicao_poho_weak}.  Suppose $0<\beta \leq b(x)\in L^\infty (\mathbb{R}^N)\cap C^1 (\mathbb{R}^N)$ and $V(x)\in C^1(\mathbb{R}^N \setminus \mathcal{O}),$ where $\mathcal{O}$ is a finite set.	Then it holds
		\begin{multline}
			N \int_{\mathbb{R}^N} F(x,u) + b(x) G(v)\dx +\sum _{i=1}^N \int _{\mathbb{R}^N}x_i F_{x_i} (x,u) \dx + \int _{\mathbb{R}^N} \left\langle  x,\nabla b (x) \right\rangle G(v) \dx=\\
			\frac{N-2}{2}\int_{\mathbb{R}^N}  uf(x,u) + vb(x)g(v)\dx + \int_{\mathbb{R}^N} \left[ 2 V(x) + \left\langle x, \nabla V(x) \right\rangle \right] uv \dx,\label{SIS_poho}
		\end{multline}
		provided the following terms belong to $ L^1(\mathbb{R}^N )$,
		\begin{equation}
			F(x,u),\ b(x)G(v),\ \sum _{i=1}^N x_i F_{x_i}(x,u),\  \left\langle \nabla b(x),x\right\rangle G(v),\ V(x)uv,\ \left\langle \nabla V(x),x \right\rangle uv,\ uf(x,u),\ vb(x)g(v).\label{condicao_poho}
		\end{equation}
		Moreover, if $g(t)$ is a nondecreasing homeomorphism and 
		\begin{equation*}
			v= h \left( (b(x))^{-1}L (u) \right) \in W_{\loca}^{2, p/(p-1)}(\mathbb{R}^N ),	
		\end{equation*}
		then the following identity holds 
		\begin{multline}
			N\int_{\mathbb{R}^N }b(x) H\left((b(x))^{-1}L(u)\right)\dx+ \int_{\mathbb{R}^N }\left\langle  x, \nabla b(x) \right\rangle H\left((b(x))^{-1}L(u)\right)\dx \\ 
			+ \int_{\mathbb{R}^N} h \left((b(x))^{-1} L(u)\right)\left[ 2 \Delta u + \left\langle x,\nabla V(x) \right\rangle u  -(b(x))^{-1}\left\langle x,\nabla b(x) \right\rangle L(u)\right]\dx \\ = N \int _{\mathbb{R}^N}F(x,u) \dx + \sum _{i=1}^N \int _{\mathbb{R}^N}x_i  F_{x_i}(x,u) \dx.\label{EQ_poho}
		\end{multline}
	\end{theorem}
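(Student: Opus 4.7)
The plan is the classical Pohozaev multiplier technique adapted to Hamiltonian systems: multiply the first equation of \eqref{S} by $\langle x,\nabla u\rangle$, the second by $\langle x,\nabla v\rangle$, add, and integrate over $\mathbb{R}^N$. Three algebraic identities do the heavy lifting. (i) The symmetrization $v_j u_{ij}+u_j v_{ij}=\partial_i(\nabla u\cdot\nabla v)$, combined with one integration by parts, gives
\[
\int_{\mathbb{R}^N}\bigl[(-\Delta v)\langle x,\nabla u\rangle+(-\Delta u)\langle x,\nabla v\rangle\bigr]\dx=(2-N)\int_{\mathbb{R}^N}\nabla u\cdot\nabla v\dx.
\]
(ii) Writing $V(v\langle x,\nabla u\rangle+u\langle x,\nabla v\rangle)=V\langle x,\nabla(uv)\rangle$ and integrating by parts produces $-N\int Vuv\dx-\int\langle x,\nabla V\rangle uv\dx$. (iii) Setting $\widetilde F(x):=F(x,u(x))$ we have $\partial_{x_i}\widetilde F=F_{x_i}(x,u)+f(x,u)\partial_{x_i}u$, so
\[
\int_{\mathbb{R}^N} f(x,u)\langle x,\nabla u\rangle\dx=-N\int_{\mathbb{R}^N}F(x,u)\dx-\sum_{i=1}^N\int_{\mathbb{R}^N}x_i F_{x_i}(x,u)\dx,
\]
and an entirely parallel computation for $b(x)g(v)\langle x,\nabla v\rangle$ yields the same with $F$ replaced by $b\,G$, picking up the extra $-\int\langle x,\nabla b\rangle G(v)\dx$ from the product rule. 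Using \eqref{condicao_poho_weak} to eliminate $\int\nabla u\cdot\nabla v\dx$ then produces \eqref{SIS_poho}.

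To make this rigorous globally on $\mathbb{R}^N$, I carry out a local-to-global cutoff argument. Take $\psi_R\in C_c^\infty(\mathbb{R}^N)$ with $\psi_R\equiv 1$ on $B_R$, $\supp\psi_R\subset B_{2R}$, $|\nabla\psi_R|\leq C/R$, together with a second cutoff $\eta_\varepsilon$ vanishing in $\varepsilon$-neighborhoods of the finite set $\mathcal{O}$ (so that $\nabla V$ is classically defined on $\supp\eta_\varepsilon$). The test functions $\psi_R\eta_\varepsilon\langle x,\nabla u\rangle$ and $\psi_R\eta_\varepsilon\langle x,\nabla v\rangle$ are admissible by the local regularity $u\in W_{\loca}^{2,q/(q-1)}(\mathbb{R}^N)$, $v\in W_{\loca}^{2,p/(p-1)}(\mathbb{R}^N)$. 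Letting first $\varepsilon\to 0$ and then $R\to\infty$, each bulk integral converges by dominated convergence under the integrability list \eqref{condicao_poho}. Errors coming from $\nabla\psi_R$ live on the annulus $B_{2R}\setminus B_R$; the Pohozaev multiplier contributes $|x|\lesssim R$ which is cancelled by $|\nabla\psi_R|\lesssim 1/R$, so each such error reduces to a tail of an $L^1$-integral of one of the quantities in \eqref{condicao_poho} or of $\nabla u\cdot\nabla v$ (the latter being controlled via \eqref{condicao_poho_weak}), and hence vanishes. The $\nabla\eta_\varepsilon$ errors near $\mathcal{O}$ are killed by a capacity estimate exploiting that $\mathcal{O}$ is finite and that $V(x)uv,\,\langle x,\nabla V\rangle uv\in L^1(\mathbb{R}^N)$.

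The second identity \eqref{EQ_poho} then follows from \eqref{SIS_poho} by a purely algebraic substitution. From $v=h((b(x))^{-1}L(u))$ one has $b(x)g(v)=L(u)$, and the Legendre duality $G(h(t))=t\,h(t)-H(t)$ gives $b(x)G(v)=v\,L(u)-b(x)H((b(x))^{-1}L(u))$ and $\langle x,\nabla b\rangle G(v)=\tfrac{1}{b(x)}\langle x,\nabla b\rangle v\,L(u)-\langle x,\nabla b\rangle H((b(x))^{-1}L(u))$. The cross-integral identity $\int uf(x,u)\dx=\int v\,L(u)\dx$, immediate from testing either equation against $u$ or $v$ (valid under \eqref{condicao_poho_weak}), removes $\int uf(x,u)\dx$ from \eqref{SIS_poho}, and rewriting $v\,L(u)-V(x)uv=-v\Delta u$ collapses the remaining terms into the bracket $2v\Delta u+\langle x,\nabla V\rangle u-(b(x))^{-1}\langle x,\nabla b\rangle L(u)$ appearing in \eqref{EQ_poho}.

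The hard part is the cutoff analysis of the previous paragraph: because the Pohozaev multiplier is unbounded at infinity, the annular error integrals carry a factor $\sim R$ that is only just compensated by $|\nabla\psi_R|\sim 1/R$, so I must check carefully that \emph{every} error produced by $\nabla\psi_R$ reduces to a tail of a quantity hypothesized to lie in $L^1(\mathbb{R}^N)$. This is precisely what forces the list \eqref{condicao_poho} on us, since for the class of strong solutions considered no stronger a priori global integrability is available. Handling the singularity of $V$ on $\mathcal{O}$ is a secondary but structurally similar difficulty, dealt with by the $\eta_\varepsilon$ cutoff.
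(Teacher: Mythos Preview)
Your proposal is correct and follows essentially the same route as the paper: multiply the two equations by the Pohozaev test functions $\langle x,\nabla u\rangle$ and $\langle x,\nabla v\rangle$ truncated by cutoffs, exploit the divergence identities you list, pass to the limit by dominated convergence using the $L^1$ assumptions \eqref{condicao_poho}, and then invoke \eqref{condicao_poho_weak}; the second identity \eqref{EQ_poho} is obtained from \eqref{SIS_poho} exactly as you describe, via $G(h(t))=t\,h(t)-H(t)$ and $\int uf(x,u)\dx=\int v\,b(x)g(v)\dx$. The only cosmetic difference is that the paper uses a single cutoff $\xi_n$ which simultaneously truncates at scale $n$ near infinity and excises shrinking balls of radius $\sim 1/n$ around the points of $\mathcal{O}$ (designed so that $|x|\,|\nabla\xi_n|\le C$), whereas you separate this into $\psi_R\eta_\varepsilon$ and take an iterated limit; the underlying error analysis is the same.
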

We emphasize that Theorem \ref{p_pohozaev_id} gives a partial answer for the Lane-Endem conjecture  for the case of weak solutions (see \cite{zouh,souplet}), which we state as follows:
\begin{corollary}\label{c_leconject} 
 \begin{flushleft}
$\mathrm{(i)}$ Suppose $N \geq 3$ and $(p,q)$ satisfies \eqref{HYPsuper} or \eqref{HYPcrit}. Let 
     $(u,v) \in W^{2, m}(\mathbb{R}^N) \times W^{2, l}(\mathbb{R}^N )$ be a nonnegative 
  weak solution of the following system
		\begin{equation*}
			\left\{\begin{array}{lll}
				-\Delta v+ v = |u|^{p-2}u \quad &\mbox{in}\quad \mathbb{R}^N,\\
				-\Delta u+ u = |v|^{q-2}v \quad &\mbox{in}\quad \mathbb{R}^N.
			\end{array}\right.
		\end{equation*}
		 Then $u=v=0.$\\
$\mathrm{(ii)}$  Suppose $N \geq 3$ and $(p,q)$ satisfies \eqref{HYPsub}. 
Let $ (u,v) \in  \mathcal{D}^{2, m}(\mathbb{R}^N) \times  \mathcal{D}^{2, l}(\mathbb{R}^N )
		$ be  a weak solution of
		\begin{equation*}
			\left\{\begin{array}{lll}
				-\Delta v = |u|^{p-2}u \quad &\mbox{in}\quad \mathbb{R}^N,\\
				-\Delta u =|v|^{q-2}v \quad &\mbox{in}\quad \mathbb{R}^N.
			\end{array}\right.
		\end{equation*}
		Then $u=v=0$.
\end{flushleft} 
	\end{corollary}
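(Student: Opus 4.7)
The strategy is to invoke the Pohozaev-type identity of Theorem \ref{p_pohozaev_id} in two closely related specializations and then exploit the algebraic incompatibility with the hyperbola condition. In both parts the data are $b \equiv 1$, $f(x,u) = |u|^{p-2}u$, $g(v) = |v|^{q-2}v$, so $F(u) = |u|^p/p$, $G(v) = |v|^q/q$, while $F_{x_i}$, $\nabla b$, $\nabla V$ all vanish. In part~(i), $V \equiv 1$ contributes an extra $2uv$ term in \eqref{SIS_poho}, whereas in part~(ii), $V \equiv 0$ erases it.

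The first task is to verify the hypotheses of Theorem~\ref{p_pohozaev_id}. The identity \eqref{condicao_poho_weak} is obtained by testing each equation with the opposite component and summing, which is legal under the given weak-solution framework. For the integrability requirements in \eqref{condicao_poho}: starting from $u \in W^{2,m}(\mathbb{R}^N)$ (resp.\ $\mathcal{D}^{2,m}(\mathbb{R}^N)$), the inclusion $\Delta u \in L^m$ combined with the second equation forces $|v|^{q-2}v \in L^m$, i.e., $v \in L^q$; symmetrically $u \in L^p$. A standard bootstrap, modeled on Theorem \ref{p_regsub} in part~(i) and performed directly in part~(ii), then upgrades the pair to a strong solution in $W^{2,m}_{\loca}\times W^{2,l}_{\loca}$ with enough smoothness for the Pohozaev identity to apply; the membership $uv \in L^1$ in part~(i) will follow from the energy identity below.

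Testing the first equation with $u$ and the second with $v$ (legal by the preceding step) yields
\begin{equation*}
\int_{\mathbb{R}^N} |u|^p \dx \;=\; \int_{\mathbb{R}^N} \bigl(\nabla u\cdot\nabla v + V(x)\,uv\bigr)\dx \;=\; \int_{\mathbb{R}^N} |v|^q \dx \;=:\; T.
\end{equation*}
Substituting into \eqref{SIS_poho} and using the vanishing of $F_{x_i}$, $\nabla b$, $\nabla V$ collapses the Pohozaev identity to
\begin{equation*}
T\cdot N\Bigl(\tfrac{1}{p}+\tfrac{1}{q}-\tfrac{N-2}{N}\Bigr) \;=\; 2\chi \int_{\mathbb{R}^N} uv\dx,
\end{equation*}
where $\chi=1$ in part~(i) (coming from $2V\equiv 2$) and $\chi=0$ in part~(ii).

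The conclusion is then case-by-case. In part~(ii), $\chi=0$ and \eqref{HYPsub} makes the bracket strictly positive, hence $T=0$ and therefore $u \equiv v \equiv 0$. In part~(i), the supercritical case \eqref{HYPsuper} produces a strictly negative bracket, while the non-negativity of $u,v$ forces $\int uv\geq 0$; both sides must therefore vanish and we recover $T=0$. The critical case \eqref{HYPcrit} of part~(i) is the delicate one: the bracket vanishes and we only obtain $\int uv = 0$, hence $uv=0$ almost everywhere. To close I would invoke the strong maximum principle for $-\Delta+\mathrm{id}$, applied to the (by further bootstrap classically regular) inequality $-\Delta u+u = v^{q-1}\geq 0$: if $u\not\equiv 0$, then $u>0$ pointwise, so $v\equiv 0$, but then the first equation gives $u^{p-1}\equiv 0$, a contradiction; a symmetric argument handles $v\not\equiv 0$. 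The principal technical obstacle is this chain of regularity/integrability verifications—specifically ensuring all members of \eqref{condicao_poho} are in $L^1(\mathbb{R}^N)$ and that enough classical smoothness is available in the critical sub-case to legitimately invoke the strong maximum principle.
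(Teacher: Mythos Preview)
Your argument is correct and closely parallels the paper's, but the route is not identical. The paper obtains the corollary as a specialization of Proposition~\ref{p_nonexist}: with $g(t)=|t|^{q-2}t$ one has $\mathcal{C}_1=\mathcal{C}_2$, and then conditions \eqref{assump_nonexist2} and \eqref{assump_nonexist} reduce precisely to $1/p+1/q\le(N-2)/N$ and $1/p+1/q>(N-2)/N$, so parts (ii) and (iii) of Proposition~\ref{p_nonexist}, which are proved via the \emph{equation} form \eqref{EQ_poho} of the Pohozaev identity, dispose of the two cases. You instead work directly with the \emph{system} form \eqref{SIS_poho} and exploit the additional energy identity $\int|u|^p=\int|v|^q$ obtained by cross-testing, which collapses \eqref{SIS_poho} into the single scalar relation $T\cdot N\bigl(\tfrac1p+\tfrac1q-\tfrac{N-2}{N}\bigr)=2\chi\int uv$. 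This is more elementary and makes the role of the critical hyperbola completely transparent; the paper's route has the advantage of being a by-product of a result that also covers oscillating nonlinearities through the constants $\mathcal{C}_1,\mathcal{C}_2$. Your handling of the critical endpoint in part~(i) via $uv=0$ a.e.\ followed by the strong maximum principle is the same mechanism used in the proof of Proposition~\ref{p_nonexist}\,(iv). The integrability checks you flag (notably $uv\in L^1$, which follows from $u\in L^m$, $v\in L^q$ and $1/m+1/q=1$) go through without difficulty.
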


	%
	%

	\subsection{Additional remarks on the hypotheses} \ 
	
\noindent$\mathrm{(i)}$ Condition \eqref{bem_def} is satisfied by any $f: \mathbb{R}^N \times \mathbb{R} \rightarrow \mathbb{R}$  Carath\'{e}odory function such that $\lim_{t\rightarrow 0}f(x,t)/|t|^{m-1}=0,$ uniform in $x$, and such that
	\begin{equation*}
		|f(x,t)|\leq C(1+|t|^{\varrho(t)-1}) \quad \text{  almost everywhere }  \mathbb{R}^N, \; \forall \,  t \in \mathbb{R},
	\end{equation*}
	where $\varrho(t) \in L^\infty ( \mathbb{R}),$ $m<\inf_{t \in \mathbb{R}}\varrho (t)\leq  \sup_{t \in \mathbb{R}}\varrho(t)<m_\ast$ and $\inf _{x\in[\delta,1]}\varrho(x) \geq \sup _{x \in [1,\infty)}\varrho(x),$ for any $\delta \in (0,1).$ For this case, in the above conditions is implicit that $(\varrho(t) - 1)(q-1)>1,$ and 
	\begin{equation*}
		\frac{1}{\varrho(t)} + \frac{1}{q} > 1 - \frac{2}{N},\ \varrho(t),q >1,\ N \geq 3.
	\end{equation*}
	\noindent$\mathrm{(ii)}$ Theorems \ref{p_regsub} improves and extends \cite[Theorem A.1]{edersonGSH} for more general nonlinearities and potentials. Besides, if we suppose $f(x,t) \equiv f(t)$ is a homeomorphism additionally to the assumptions of Theorem \ref{p_regsub}, then the same argument of \cite[Theorem A.1]{edersonGSH} gives
	\[
	\begin{aligned}
		u \in W^{2,\theta}(\mathbb{R}^N), \text{ for } \max\{1,1/(p-1) \} < \theta < \infty,\\
		v \in W^{2,\hat{\theta}} (\mathbb{R}^N), \text{ for }  \max\{1,1/(q-1) \} < \hat{ \theta }< \infty.
	\end{aligned}
	\]
	In particular, $|u(x)|,$ $|v(x)|,$ $|\nabla u(x)|,$ $|\nabla v(x)|\rightarrow 0,$ as $|x|\rightarrow \infty.$
	
	\noindent$\mathrm{(iii)}$ Suppose $g(t) \in C^{2,\alpha}_{\loca} (\mathbb{R}^N)$ and the hypotheses of Theorem \ref{p_regsub}, then $u \in W^{4,\theta}(\mathbb{R}^N)\cap C^{4,\alpha }_{\loca} (\mathbb{R}^N),$ for some $\theta > 1.$ Similarly, under the assumptions of Theorem \ref{p_regcrit}, we have  $u \in  \mathcal{D}^{4,\theta }_\rad (\mathbb{R}^N)\cap C^{4,\alpha }_{\loca} (\mathbb{R}^N).$	
	
	\noindent$\mathrm{(iv)}$ If there exists $t_0 > 0 $ such that $F(t_0) > 0,$ then the infimum in \eqref{min} is positive. 
	Indeed,  for $ R>0,$ take  $v _R \in C^\infty _0 (\mathbb{R},[0,t_0])$ such that $v _R(t)=t_0,$ if $|t|\leq R,$ and $v _R(t)=0,$ if $|t|>R+1.$ 
	Defining $\varphi_R(x) := v_R(|x|),$ we have $\varphi_R \in \mathcal{D}^{2,m}_{\rad } (\mathbb{R}^N ).$ Moreover, 
	\begin{equation*}
		\int_{\mathbb{R}^N} F(\varphi_R ) \dx \geq F(t_0)|B_R|-|B_{R+1}\setminus B_R|\left(\max_{t \in [0,t_0]} |F(t)|\right)>0,
	\end{equation*}
	if $R$ is large.  Thus, taking a suitable $\sigma >0$, 
	$
	\int _{\mathbb{R}^N} F(\varphi_R (\cdot / \sigma )) \dx = 1.
	$
	\begin{example}
		Our approach includes the following potentials and nonlinearities:
		
		\noindent$\mathrm{(i)}$ If  $V(x)\in L ^\infty(\mathbb{R}^N)$ satisfies \eqref{V_pesocerto} is $\mathbb{Z}^N$--periodic and strictly positive.
		
		\noindent$\mathrm{(ii)}$ In Example \ref{ex_ss_g}, we construct a function $g$ satisfying \eqref{g_dois}--\eqref{g_quatro}.
		
		\noindent$\mathrm{(iii)}$ Also, if \eqref{HYPsub} holds, then $f(x,t)=k(x)\left[ \varrho'(t) (\ln |t| t) + \varrho (t)\right] |t|^{\varrho(t) -2}t$ with $ f(x,0)\equiv 0,$ satisfies \eqref{bem_def}, \eqref{A-R} and \eqref{posi_algum}, for $\varrho(t) = \delta_1 \sin\left( \ln( |\ln|t| |)\right)+\delta_2,$ $0 < \inf _{x \in \mathbb{R}^3} k(x) \leq   k(x)\in C(\mathbb{R})\cap L^\infty(\mathbb{R}^N),$  and suitable $\delta_1, \delta_2>0.$
		
		\noindent$\mathrm{(iv)}$ The nonlinearity $f(t) = (c \delta \cos(\delta \ln |t| ) +p)( \exp\{c(\sin(\delta \ln |t|) +1)\} )|t|^{p-2}t, $ $f(0):=0,$ where $\delta = 2 \pi m /(\ln2 (N-2m) )$ fulfills the conditions of Theorem~\ref{GS_crit}, whenever \eqref{HYPcrit} and $\delta c+\mu\leq p$ hold.
	\end{example}
	
	%
	%
	
	\subsection{Outline}
	Sect. \ref{s_preli}, is dedicated to developing our variational framework. In Sect.~\ref{s_regularity}, we prove some regularity results and a Pohozaev-type identity, which allows us to obtain some nonexistence results. In Sect.~\ref{s_cc}, we study a concentration-compactness employing profile decomposition type result and the almost everywhere convergence of the quasilinear term of $I$. Sections \ref{s_t_mincrit} and \ref{s_t_proof} are dedicated to the proof of our main results.
	
	\section{Preliminaries results}\label{s_preli}
	
	\subsection{Variational Settings} Next, we present some properties of the spaces $W^{2,m}_V(\mathbb{R}^N)$ and  $\mathcal{D}_\rad^{2,m} (\mathbb{R}^N).$
	\begin{proposition}\label{p_normasirakov} $W^{2,m}_V(\mathbb{R}^N)$ is a well-defined Banach space and it is continuously embedded in $W^{2,m} (\mathbb{R}^N)$ provided that \eqref{V_sirakov} holds. Moreover,
		\begin{equation}\label{conversely}
			W^{2,m}_V(\mathbb{R}^N) \subset \left\lbrace u \in W^{2,m}(\mathbb{R}^N) : \int_{\mathbb{R}^N} |V(x)u|^m\dx < \infty \right\rbrace .
		\end{equation}
	\end{proposition}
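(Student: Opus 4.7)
The plan is to verify that $\|\cdot\|_V$ is a norm on $C^\infty_0(\mathbb{R}^N)$, derive an a priori inequality $\|u\|_{W^{2,m}}\leq C\|u\|_V$ for all $u\in C^\infty_0(\mathbb{R}^N)$, and then identify the abstract completion with a concrete subspace of $W^{2,m}(\mathbb{R}^N)$ by passing to the limit.

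First I would check that $\|\cdot\|_V$ is actually a norm. Positive homogeneity and the triangle inequality follow directly from Minkowski's inequality in $L^m$. The only nontrivial point is definiteness: if $\|u\|_V=0$ then $\|\Delta u\|_m=\|Vu\|_m=0$. By homogeneity of the infimum in \eqref{V_sirakov}, the constant $\mathcal{C}_V>0$ yields
\begin{equation*}
\mathcal{C}_V\,\|u\|_m^m \leq \|\Delta u\|_m^m + \|Vu\|_m^m = \|u\|_V^m \quad\text{for all } u\in C^\infty_0(\mathbb{R}^N),
\end{equation*}
so $\|u\|_V=0$ forces $\|u\|_m=0$, hence $u\equiv 0$.

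Next I would prove $\|u\|_{W^{2,m}}\leq C\|u\|_V$ on $C^\infty_0(\mathbb{R}^N)$. The previous displayed inequality gives $\|u\|_m \leq \mathcal{C}_V^{-1/m}\|u\|_V$. The Calder\'on--Zygmund estimate on $\mathbb{R}^N$ (valid since $1<m<\infty$) produces a constant $C_{m,N}$ with $\|D^2 u\|_m \leq C_{m,N}\|\Delta u\|_m \leq C_{m,N}\|u\|_V$. A standard Gagliardo--Nirenberg interpolation then bounds $\|\nabla u\|_m$ by $\|u\|_m^{1/2}\|D^2 u\|_m^{1/2}$, and Young's inequality yields $\|\nabla u\|_m\lesssim \|u\|_V$. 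Adding the three contributions gives the a priori inequality. Since $W^{2,m}(\mathbb{R}^N)$ is complete, every $\|\cdot\|_V$-Cauchy sequence $(u_k)\subset C^\infty_0(\mathbb{R}^N)$ is Cauchy in $W^{2,m}(\mathbb{R}^N)$ and therefore converges to some $u\in W^{2,m}(\mathbb{R}^N)$. This embeds the completion continuously into $W^{2,m}(\mathbb{R}^N)$ and shows that $W^{2,m}_V(\mathbb{R}^N)$ is a Banach space.

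Finally, to obtain \eqref{conversely}, take $u\in W^{2,m}_V(\mathbb{R}^N)$ and a sequence $(u_k)\subset C^\infty_0(\mathbb{R}^N)$ with $\|u_k-u\|_V\to 0$. Then $u_k\to u$ in $W^{2,m}(\mathbb{R}^N)$ by the previous paragraph, and $(Vu_k)$ is Cauchy in $L^m(\mathbb{R}^N)$, so $Vu_k\to h$ in $L^m(\mathbb{R}^N)$ for some $h\in L^m(\mathbb{R}^N)$. The expected obstacle is the identification $h=Vu$ a.e., since $V$ need not be locally bounded. The hypothesis $V\in L^\nu_{\loca}(\mathbb{R}^N)$ handles this: after extracting a subsequence we may assume $u_k\to u$ a.e.\ and $Vu_k\to h$ a.e., and since $V$ is finite a.e.\ we conclude $Vu_k\to Vu$ a.e., whence $h=Vu$ a.e. Thus $Vu\in L^m(\mathbb{R}^N)$, establishing \eqref{conversely}. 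The norm $\|u\|_V$ of the limit is then $(\|\Delta u\|_m^m+\|Vu\|_m^m)^{1/m}$, confirming that the abstract completion is concretely realized inside $W^{2,m}(\mathbb{R}^N)$.
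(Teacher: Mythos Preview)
Your proof is correct and follows essentially the same route as the paper: both obtain $\|u\|_m\leq \mathcal{C}_V^{-1/m}\|u\|_V$ from \eqref{V_sirakov}, combine it with the trivial bound $\|\Delta u\|_m\leq\|u\|_V$ (and the Calder\'on--Zygmund equivalence of $\|\cdot\|_{W^{2,m}}$ with $\|\Delta u\|_m+\|u\|_m$ on $\mathbb{R}^N$) to get the continuous embedding, and then pass to the limit. The only minor difference is in the verification of \eqref{conversely}: the paper invokes Fatou's Lemma on $|Vu_k|^m$ along an a.e.\ convergent subsequence, while you identify the $L^m$-limit of $(Vu_k)$ directly via a.e.\ convergence; both arguments are standard and equivalent in strength.
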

	\begin{proof} Clearly
		\begin{equation}\label{para_provar}
			\int_{\mathbb{R}^N} | \Delta u |^m \dx\leq \int _{\mathbb{R}^N} |\Delta u|^m + |V(x)u |^m\dx, \ \forall \, u \in C^\infty _0( \mathbb{R}^N).
		\end{equation}
		Also by \eqref{V_sirakov} we have
		\begin{equation*}
			\mathcal{C}_V \int _{\mathbb{R}^N}|u|^m \dx\leq \int _{\mathbb{R}^N} |\Delta u|^m + |V(x)u |^m\dx, \ \forall \, u \in C^\infty _0( \mathbb{R}^N).
		\end{equation*}
		From \eqref{para_provar}, for any sequence $(\varphi _n) \subset C^\infty _0 (\mathbb{R}^N),$
		\begin{equation*}
			\|\varphi _k - \varphi _l \|^m_{2,m}\leq (1 + \mathcal{C}_V)\|\varphi _k - \varphi _l\|^m_V,\ \forall \, k\neq l.
		\end{equation*}
		Consequently $W^{2,m}_V(\mathbb{R}^N)$ is well defined with continuous embedding in $W^{2,m}(\mathbb{R}^N),$ and \eqref{conversely} holds by Fatou Lemma. \end{proof}
	\begin{corollary}
		Assume \eqref{V_sirakov} and $V(x) \in L^{\infty}_{\loca}(\mathbb{R}^N),$ then
		\begin{equation*}
			W^{2,m}_V(\mathbb{R}^N) = \left\lbrace u \in W^{2,m}(\mathbb{R}^N) : \int_{\mathbb{R}^N} |V(x)u|^m\dx < \infty \right\rbrace .
		\end{equation*}
		In particular, $W^{2,m}_V(\mathbb{R}^N) = W^{2,m}(\mathbb{R}^N)$ provided $V(x) \in L^{\infty}(\mathbb{R}^N).$
	\end{corollary}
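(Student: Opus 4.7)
The inclusion $\subseteq$ is precisely the content of Proposition~\ref{p_normasirakov}. For the reverse inclusion, the plan is to prove that $C_0^\infty(\mathbb{R}^N)$ is dense in the right-hand set equipped with the norm $\|\cdot\|_V$; once this is done, the universal property of the completion forces any such $u$ to belong to $W^{2,m}_V(\mathbb{R}^N)$. I would carry out the classical two-step approximation scheme: truncate to compact support, then mollify.

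\textbf{Truncation.} Fix $u\in W^{2,m}(\mathbb{R}^N)$ with $Vu\in L^m(\mathbb{R}^N)$. Choose $\eta\in C_0^\infty(\mathbb{R}^N)$ with $0\le\eta\le 1$, $\eta\equiv 1$ on $B_1$ and $\supp\eta\subset B_2$, and set $\eta_R(x)=\eta(x/R)$, $u_R:=\eta_R u$. I would show $\|u_R-u\|_V\to 0$ as $R\to\infty$. Splitting $\Delta u_R$ by Leibniz gives
\[
\Delta u_R=\eta_R\Delta u+2\nabla u\cdot\nabla\eta_R+u\,\Delta\eta_R,
\]
and the scalings $\|\nabla\eta_R\|_\infty=R^{-1}\|\nabla\eta\|_\infty$, $\|\Delta\eta_R\|_\infty=R^{-2}\|\Delta\eta\|_\infty$ show the last two terms tend to $0$ in $L^m$, while $\eta_R\Delta u\to\Delta u$ in $L^m$ by dominated convergence (dominator $|\Delta u|$). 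For the weighted part, $|V(u_R-u)|^m=|V|^m(1-\eta_R)^m|u|^m\le|Vu|^m\in L^1$ and the integrand vanishes pointwise, so another application of DCT finishes this step.

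\textbf{Mollification.} Each $u_R\in W^{2,m}(\mathbb{R}^N)$ has compact support in $B_{2R}$. Convolving with a standard mollifier $\rho_\varepsilon$ yields $\phi_{R,\varepsilon}:=\rho_\varepsilon*u_R\in C_0^\infty(\mathbb{R}^N)$, supported in $B_{2R+1}$ for $\varepsilon<1$, and standard mollifier theory gives $\phi_{R,\varepsilon}\to u_R$ in $W^{2,m}(\mathbb{R}^N)$ as $\varepsilon\to 0$. Here is where $V\in L^\infty_{\loca}(\mathbb{R}^N)$ enters crucially: since the supports remain inside the fixed compact set $\overline{B_{2R+1}}$,
\[
\|V(\phi_{R,\varepsilon}-u_R)\|_m\le\|V\|_{L^\infty(B_{2R+1})}\|\phi_{R,\varepsilon}-u_R\|_m\to 0.
\]
A diagonal choice of $(\varepsilon_R)$ produces $\phi_{R,\varepsilon_R}\in C_0^\infty(\mathbb{R}^N)$ with $\phi_{R,\varepsilon_R}\to u$ in $\|\cdot\|_V$, so $u\in W^{2,m}_V(\mathbb{R}^N)$ as desired.

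The \emph{in particular} assertion is immediate: if $V\in L^\infty(\mathbb{R}^N)$, then $\|Vu\|_m\le\|V\|_\infty\|u\|_m<\infty$ for every $u\in W^{2,m}(\mathbb{R}^N)$, so the right-hand set coincides with $W^{2,m}(\mathbb{R}^N)$. No serious obstacle arises in this argument; the one point demanding care is precisely the use of $L^\infty_{\loca}$ rather than mere local integrability, which is exactly what is needed to upgrade $L^m$-convergence to $V$-weighted $L^m$-convergence on the bounded sets containing the mollified supports.
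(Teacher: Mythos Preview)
Your proof is correct and follows essentially the same approach as the paper: the inclusion $\subseteq$ is taken from Proposition~\ref{p_normasirakov}, and the reverse inclusion is obtained by the same two-step approximation (truncation by a scaled cutoff and mollification), with the hypothesis $V\in L^\infty_{\loca}(\mathbb{R}^N)$ used at exactly the same place to control $\|V(\phi-u)\|_m$ on compact sets. The only cosmetic difference is the order of presentation: the paper treats the compactly supported case (mollification) first and then reduces to it by truncation, whereas you truncate first and then mollify, but the content is identical.
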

	\begin{proof} 
		Let $u \in W^{2,m} (\mathbb{R}^N)$ such that $\int_{\mathbb{R}^N} |V(x)u|^m\dx < \infty.$ Assume first that $u$ has compact support. Let $(\varrho_k)$ a sequence of mollifiers and define $u_k = \varrho_k \ast u \in C^\infty_0(\mathbb{R}^N).$ We have that $u_k \rightarrow u$ and $\Delta u_k \rightarrow \Delta u$ in $L^m(\mathbb{R}^N).$ Since $K:=\supp(u) \cup \overline{B_1 }$ is a compact set,
		\begin{equation*}
			\int_{\mathbb{R}^N}|V(x)(u_k -u)|^m \dx\leq \|V(x)\|^m_{L^\infty(K)}\int_{\mathbb{R}^N} |u_k -u|^m\dx \rightarrow 0, \text{ when }k \rightarrow \infty.
		\end{equation*}
		For the general case, we consider a truncation function $\xi \in C_0 ^\infty (\mathbb{R}^N)$ such that $\xi\equiv 1$ in $B_1,$ $\xi\equiv 0$ on $\mathbb{R}^N \setminus B_2$ and $\|\Delta \xi\|_\infty, \| \nabla \xi\|_\infty \leq M.$ Setting $\xi _k (x) = \xi (x/k)$ and $u_k = \xi_k u \in W^{2,m} (\mathbb{R}^N)$ (which has compact support) we have $\|u_k - u\|_V \rightarrow 0,$ by Lebesgue's theorem.
	\end{proof}
	We consider the following norm on $W^{2,m}_V(\mathbb{R}^N),$
	\begin{equation*}
		[ \, u \, ]_V := \left[ \int_{\mathbb{R}^N}|L(u)|^m \dx \right] ^{1/m},
	\end{equation*}
	which gives the appropriated variational formulation for weak solutions of \eqref{P}.
	\begin{corollary}\label{l_norma_comparar_sub}
		The norms $[\ \cdot \ ]_V$ and $\| \cdot \| _V$ are equivalent in $W^{2,m}_V (\mathbb{R}^N),$ if \eqref{V_sirakov} and \eqref{V_pesocerto} hold.
	\end{corollary}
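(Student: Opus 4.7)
The plan is to establish the equivalence by proving the two inequalities separately. One direction is essentially a triangle inequality and does not use the reverse Hölder assumption; the other requires the $L^m$ a priori estimate for Schrödinger operators with potential in the reverse Hölder class, which is precisely why hypothesis \eqref{V_pesocerto} appears.

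First, I would check the easy bound $[u]_V \le C_1 \|u\|_V$ on $C_0^\infty(\mathbb{R}^N)$. Indeed, since $L(u) = -\Delta u + V(x)u$, the elementary convexity inequality $(a+b)^m \le 2^{m-1}(a^m+b^m)$ combined with $|L(u)|\le|\Delta u|+|V(x)u|$ pointwise gives
\begin{equation*}
[u]_V^m \;=\; \int_{\mathbb{R}^N}|L(u)|^m\dx \;\le\; 2^{m-1}\int_{\mathbb{R}^N}\bigl(|\Delta u|^m + |V(x)u|^m\bigr)\dx \;=\; 2^{m-1}\|u\|_V^m,
\end{equation*}
so $[u]_V \le 2^{(m-1)/m}\|u\|_V$ for every $u\in C_0^\infty(\mathbb{R}^N)$, and hence by density for every $u\in W^{2,m}_V(\mathbb{R}^N)$.

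For the reverse inequality, one needs simultaneous control of $\|\Delta u\|_m$ and $\|V(x)u\|_m$ by $\|L(u)\|_m$. This is exactly the content of the $L^m$ a priori estimate for the Schrödinger operator $-\Delta+V$ with $V\ge 0$ in the reverse Hölder class $B_\nu$ (with $\nu=\max\{m,N/2\}$), established by Shen and later refined by Bramanti–Brandolini \cite{shen,brama}. Applied on $C_0^\infty(\mathbb{R}^N)$ under hypotheses \eqref{V_sirakov}--\eqref{V_pesocerto}, it yields a constant $C=C(N,m,V)>0$ such that
\begin{equation*}
\|\Delta u\|_m + \|V(x)u\|_m \;\le\; C\,\|L(u)\|_m \;=\; C[u]_V, \qquad \forall\, u\in C_0^\infty(\mathbb{R}^N),
\end{equation*}
which directly gives $\|u\|_V \le 2^{(m-1)/m}C\,[u]_V$ on the dense subspace. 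Extending by density using Proposition \ref{p_normasirakov} (which places $W^{2,m}_V(\mathbb{R}^N)\hookrightarrow W^{2,m}(\mathbb{R}^N)$ so that both the Laplacian and the multiplicative terms pass to the limit) yields the inequality on all of $W^{2,m}_V(\mathbb{R}^N)$.

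The main obstacle is the nontrivial direction: verifying that the Shen–type estimate is applicable in our regime. This requires the combination $V\in L^\nu_{\loca}$ with $\nu=\max\{m,N/2\}$ together with the $B_\nu$ reverse Hölder condition \eqref{V_pesocerto}, which together guarantee the Calderón–Zygmund type decomposition arguments underlying the estimate. Care must also be taken in the density step, since one must simultaneously pass to the limit for $\Delta u$ and for $V(x)u$; this is justified because the approximating Cauchy sequence in $\|\cdot\|_V$ converges in $L^m$ both in the Laplacian and (after a subsequence, almost everywhere) in the $V$--weighted term, and the estimate proved on $C_0^\infty$ is uniform in the approximating sequence.
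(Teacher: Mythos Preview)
Your proposal is correct and follows essentially the same approach as the paper: the trivial direction via $|a+b|^m\le 2^{m-1}(|a|^m+|b|^m)$, and the nontrivial direction by invoking Shen's $L^\theta$ estimate for Schr\"odinger operators with reverse H\"older potentials (the paper cites \cite[Corollary~0.9]{shen} for $1<\theta\le\nu$ and then takes $\theta=m$, which is admissible since $\nu=\max\{m,N/2\}\ge m$). Your added discussion of the density passage is more explicit than the paper's, but the core argument is identical.
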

	\begin{proof}
		Since $V(x)$ belongs to the reverse H\"{o}lder class $B_{\nu},$ and $\nu \geq N/2,$ we may apply the results of \cite[Corollary 0.9]{shen} to obtain $C>0$ such that
		\begin{equation*}
			\int _{\mathbb{R}^N} |\Delta u|^{\theta} + |V(x)u |^\theta\dx\leq C \int _{\mathbb{R}^N}|L(u)|^\theta \dx,\ \forall \, u \in C^\infty _0 (\mathbb{R}^N),
		\end{equation*}
		for $1< \theta \leq \nu.$ In particular, we can take $\theta = m.$ On the other hand, using $|a+b|^p \leq 2^{p-1} (  |a| ^p + |b|^p  )$, we have $[ \ u \ ]^m_V \leq 2^{m-1} \| u\|^m _V.$
	\end{proof}
	To dealing with nonlinearities with critical growth, we consider the space $\mathcal{D}^{2,m}_{\rad }(\mathbb{R}^3)$ endowed with the norm $\| u\|_{\lambda} := \left[ \int _{\mathbb{R}^N} |L_\lambda (u)|^m \dx\right]^{1/m}.$
	\begin{proposition}\label{p_norma_comparar} The norm $\|  \cdot  \|_{\lambda}$ is equivalent to the usual norm in  $\mathcal{D}_\rad^{2,m} (\mathbb{R}^N),$ provided $0 \leq \lambda < \Lambda _{N,m}^{1/m}.$
	\end{proposition}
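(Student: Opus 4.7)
The plan is to prove the two-sided estimate
\[
(1+\lambda\Lambda_{N,m}^{-1/m})^{-1}\,\|u\|_\lambda\;\leq\;\|\Delta u\|_m\;\leq\;(1-\lambda\Lambda_{N,m}^{-1/m})^{-1}\,\|u\|_\lambda
\]
directly from the sharp Hardy inequality stated just before the statement, namely
\[
\Lambda_{N,m}\int_{\mathbb{R}^N}|x|^{-2m}|u|^m\,\mathrm{d}x\;\leq\;\int_{\mathbb{R}^N}|\Delta u|^m\,\mathrm{d}x,\qquad u\in C_0^\infty(\mathbb{R}^N),
\]
which, by density and Fatou, extends to every $u\in\mathcal{D}^{2,m}(\mathbb{R}^N)$ and gives $\|\,|x|^{-2}u\|_m\leq\Lambda_{N,m}^{-1/m}\|\Delta u\|_m$. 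The case $\lambda=0$ is trivial since then $L_\lambda(u)=-\Delta u$, so I may assume $\lambda>0$.

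For the upper bound, I apply the triangle inequality in $L^m(\mathbb{R}^N)$ to the identity $L_\lambda(u)=-\Delta u-\lambda|x|^{-2}u$:
\[
\|u\|_\lambda\;=\;\|L_\lambda(u)\|_m\;\leq\;\|\Delta u\|_m+\lambda\,\|\,|x|^{-2}u\|_m\;\leq\;\bigl(1+\lambda\Lambda_{N,m}^{-1/m}\bigr)\|\Delta u\|_m,
\]
using Hardy in the last step. This shows $\|u\|_\lambda$ is continuous with respect to the usual norm.

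For the lower bound I reverse the identity, writing $-\Delta u=L_\lambda(u)+\lambda|x|^{-2}u$, and again apply triangle inequality and Hardy:
\[
\|\Delta u\|_m\;\leq\;\|L_\lambda(u)\|_m+\lambda\,\|\,|x|^{-2}u\|_m\;\leq\;\|u\|_\lambda+\lambda\Lambda_{N,m}^{-1/m}\,\|\Delta u\|_m.
\]
Since $0\leq\lambda<\Lambda_{N,m}^{1/m}$, the coefficient $\lambda\Lambda_{N,m}^{-1/m}<1$, so I can absorb the last term on the left:
\[
\bigl(1-\lambda\Lambda_{N,m}^{-1/m}\bigr)\|\Delta u\|_m\;\leq\;\|u\|_\lambda,
\]
yielding the desired reverse estimate. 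All quantities are finite on $\mathcal{D}^{2,m}(\mathbb{R}^N)$ by the Hardy inequality just invoked, so the bounds transfer from $C_0^\infty$ to the radial completion by density.

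There is no real obstacle: the argument is just two applications of the Hardy inequality bracketing $L_\lambda$ between $\pm\Delta$, and the strict bound $\lambda<\Lambda_{N,m}^{1/m}$ is precisely what is needed to make the absorption step admissible. The only mild care point is confirming density of $C_0^\infty$ (where Hardy is initially stated) inside the radial completion $\mathcal{D}^{2,m}_{\rad}(\mathbb{R}^N)$, which is built into the definition of the space, so the extension is immediate.
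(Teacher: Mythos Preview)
Your proof is correct but takes a genuinely different route from the paper. You argue directly from the sharp Hardy inequality $\|\,|x|^{-2}u\|_m\le\Lambda_{N,m}^{-1/m}\|\Delta u\|_m$ via two applications of the triangle inequality in $L^m$ and an absorption step, which works precisely because $\lambda\Lambda_{N,m}^{-1/m}<1$. The paper instead invokes an external $L^p$ estimate of Planchon--Stalker--Tahvildar-Zadeh for the radial Schr\"odinger operator with inverse-square potential, which gives an \emph{if and only if} algebraic condition on $\lambda,N,m$ for the bound $\|\Delta u\|_m\le C\|L_\lambda u\|_m$ to hold on $C^\infty_{0,\rad}$; the authors then verify this condition is satisfied whenever $N>2m$. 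Your argument is more elementary and self-contained (it uses nothing beyond what the paper already states in the introduction), and as a bonus it does not need the radiality restriction at all: your two-sided estimate holds on all of $\mathcal{D}^{2,m}(\mathbb{R}^N)$. The paper's route, on the other hand, yields a sharper picture in the radial setting (the cited condition turns out to be satisfied for all $\lambda\ge 0$ once $N>2m$, not only below the Hardy threshold), though this extra range is not used anywhere else in the paper.
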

	\begin{proof}
		This result follows from \cite[Proposition 3.5 and Remark 3.7]{lpradialestimate}. Indeed,
		\begin{equation}\label{capetaineq}
			\int_{\mathbb{R}^N}|\Delta u|^m\dx \leq C \int_{\mathbb{R}^N}|L_\lambda (u) |^m\dx, \quad \forall \, u \in C_{0,\rad} ^\infty (\mathbb{R}^N),
		\end{equation}
		holds, if and only, 
		\begin{equation*}
			\Gamma_N ^2 + \lambda ^2 >\left|\frac{N}{m} - (\Gamma_N +2)\right| ^2,\text{ that is, }\lambda ^2 > \left( \frac{N}{m} - 2\right)^2 \left[1-\frac{2m\Gamma_N}{ N-2m}  \right],
		\end{equation*}
		$\Gamma_N =(N-2)/2.$ Thus \eqref{capetaineq} holds, since $N>2m, \; N-2m(1+\Gamma_N)<0.$
	\end{proof}
	\begin{corollary}\label{c_charaD} $\mathcal{D}_\rad^{2,m}(\mathbb{R}^N) = \{u \in L_\rad^p(\mathbb{R}^N) : L_\lambda (u) \in L_\rad^{m}(\mathbb{R}^N) \}$ if $0 \leq \lambda < \Lambda _{N,m}^{1/m}.$
	\end{corollary}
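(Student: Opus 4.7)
The forward inclusion $\mathcal{D}^{2,m}_{\rad}(\mathbb{R}^N) \subseteq \{u \in L^p_{\rad} : L_\lambda u \in L^m_{\rad}\}$ is immediate. For $u \in \mathcal{D}^{2,m}_{\rad}$ take $(u_k)\subset C_{0,\rad}^\infty(\mathbb{R}^N)$ with $\|\Delta(u_k - u)\|_m \to 0$; the critical hyperbola yields $p = mN/(N-2m) = m_\ast$, so the Sobolev embedding $\mathcal{D}^{2,m}(\mathbb{R}^N) \hookrightarrow L^{m_\ast}(\mathbb{R}^N)$ gives $u \in L^p_{\rad}$, while Proposition~\ref{p_norma_comparar} ensures that $(L_\lambda u_k)$ is Cauchy in $L^m_{\rad}$ and identifies its limit distributionally with $L_\lambda u$.

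For the reverse inclusion, the plan is to realize $L_\lambda : \mathcal{D}^{2,m}_{\rad}(\mathbb{R}^N)\to L^m_{\rad}(\mathbb{R}^N)$ as a bijection, so that any $u\in L^p_{\rad}$ with $f:=L_\lambda u\in L^m_{\rad}$ is forced to coincide with the unique preimage $\tilde u\in\mathcal{D}^{2,m}_{\rad}$ of $f$. Proposition~\ref{p_norma_comparar} already says the map is bounded and bounded below, giving injectivity and closed range. Surjectivity then reduces via Hahn--Banach to density of $L_\lambda(C_{0,\rad}^\infty)$ in $L^m_{\rad}$, equivalently: any $g\in L^{m'}_{\rad}$ with $\int g\, L_\lambda\varphi\,\dx = 0$ for every $\varphi\in C_{0,\rad}^\infty$ must vanish. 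Such a $g$ solves $L_\lambda g=0$ in $\mathcal{D}'(\mathbb{R}^N)$; by interior elliptic regularity it is smooth on $\mathbb{R}^N\setminus\{0\}$, and being radial it is a combination $A r^{\beta_+}+B r^{\beta_-}$ with $\beta_{\pm}=\tfrac{1}{2}\bigl(-(N-2)\pm\sqrt{(N-2)^2-4\lambda}\bigr)$ in the real case (and an analogous bounded-oscillation form otherwise). A direct integrability count shows that no $r^{\beta_{\pm}}$ lies in $L^{m'}_{\rad}(\mathbb{R}^N)$: the condition $\beta_{\pm}>-N/m'$ required near the origin and $\beta_{\pm}<-N/m'$ required at infinity are mutually exclusive, so $A=B=0$ and $g\equiv 0$.

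Having thus produced $\tilde u\in\mathcal{D}^{2,m}_{\rad}$ with $L_\lambda\tilde u = f$, the difference $w := u-\tilde u\in L^p_{\rad}$ satisfies $L_\lambda w = 0$ in $\mathcal{D}'(\mathbb{R}^N)$. Repeating the same Euler-equation argument now in $L^p_{\rad}$ forces $w\equiv 0$, hence $u = \tilde u\in\mathcal{D}^{2,m}_{\rad}(\mathbb{R}^N)$, completing the characterization.

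The main obstacle is this Liouville/density step: verifying that the characteristic exponents $\beta_{\pm}$ attached to $L_\lambda$ block membership of every nontrivial radial kernel element in both $L^{m'}_{\rad}$ and $L^p_{\rad}$. The hypothesis $0\le\lambda<\Lambda_{N,m}^{1/m}$ is exactly what places these exponents in the regime where the integrability requirements at $0$ and at $\infty$ cannot be simultaneously met, paralleling the sharp $L^m$-Hardy--Rellich inequality of \cite{lpradialestimate} underpinning Proposition~\ref{p_norma_comparar}.
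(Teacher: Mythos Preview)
The paper states this result as an immediate corollary of Proposition~\ref{p_norma_comparar} and gives no proof, so your argument is not competing with an explicit one; it is supplying the details the paper omits. Your bijection-plus-Liouville strategy is correct: Proposition~\ref{p_norma_comparar} gives that $L_\lambda:\mathcal{D}^{2,m}_{\rad}\to L^m_{\rad}$ is bounded and bounded below, hence injective with closed range; the Hahn--Banach reduction to a radial $L^{m'}$ (respectively $L^p$) kernel of $L_\lambda$, followed by the Euler--equation classification on $\mathbb{R}^N\setminus\{0\}$ and the incompatibility of integrability at $0$ and at $\infty$, kills both obstructions. The distributional manipulations are legitimate because $N>2m$ forces $|x|^{-2}\in L^m_{\loca}$ and $p>N/(N-2)$ forces $|x|^{-2}\in L^{p'}_{\loca}$, so $|x|^{-2}g$ and $|x|^{-2}w$ are locally integrable.

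One expository correction: your final paragraph misattributes the role of the hypothesis $0\le\lambda<\Lambda_{N,m}^{1/m}$. The incompatibility ``$\beta>-N/m'$ near $0$'' versus ``$\beta<-N/m'$ at $\infty$'' holds for \emph{every} real exponent $\beta$, and in the complex-root case the envelope $r^{-(N-2)/2}$ is likewise never in $L^{m'}(\mathbb{R}^N)$ (nor in $L^p(\mathbb{R}^N)$), regardless of $\lambda$. So the Liouville step needs no restriction on $\lambda$. The place where the hypothesis genuinely enters is Proposition~\ref{p_norma_comparar} itself, which furnishes the lower bound $\|\Delta u\|_m\le C\|L_\lambda u\|_m$ on $C^\infty_{0,\rad}$ and hence the closed-range property; without it the operator $L_\lambda$ need not be bounded below and the surjectivity argument collapses.
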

	
	\subsection{Existence of bounded Palais-Smale sequences at minimax levels}  
	Lets us consider the minimax level of the energy functional  $I$ associated to problem  \eqref{P},
	\begin{equation*}
		c(I)= \inf_{\gamma \in \Gamma_I } \sup_{t \geq 0 } I(\gamma (t)),
	\end{equation*}
	where  $E=\mathcal{D}_\rad^{2,m} (\mathbb{R}^N) $ or $E = W^{2,m}_V(\mathbb{R}^N)$ and
	$$
	\Gamma _{I} = \left\lbrace \gamma \in C([0, \infty ),  E) : \gamma(0)=0, \ \lim _{t \rightarrow \infty}I(\gamma (t)) = - \infty \right\rbrace .
	$$ 
	Now we prove that $I$ has a minimax geometry.
	
	\begin{proposition}\label{p_geoMP} $\mathrm{(i)}$ Assume that there exists $t_\ast> 0 $ such that $F(t_\ast)>0,\;   V(x) = -\lambda |x|^{-2}$ with $0 \leq \lambda < \Lambda _{N,m}^{1/m},$ and that \eqref{A-R}, \eqref{constant_ss}, \eqref{g_dois} hold. Then $0<c(I)<\infty$, and there exists a bounded sequence $(u_k)$ in $\mathcal{D}^{2,m}_\rad (\mathbb{R}^N)$ such that $I(u_k) \rightarrow c(I)$ and $I'(u_k) \rightarrow 0.$
		
		\noindent$\mathrm{(ii)}$ The same conclusion of (i) holds for $W_V^{2,m}(\mathbb{R}^N)$ if \eqref{HYPsub}, \eqref{V_sirakov}, \eqref{V_pesocerto}, \eqref{bem_def}, \eqref{A-R}, \eqref{posi_algum} and \eqref{g_dois} hold.
	\end{proposition}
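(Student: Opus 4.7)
The plan is to verify the Ambrosetti--Rabinowitz mountain pass hypotheses for $I$ on $E$ (taking $E=\mathcal{D}_\rad^{2,m}(\mathbb{R}^N)$ for (i) and $E=W_V^{2,m}(\mathbb{R}^N)$ for (ii)), apply the classical mountain pass theorem in its deformation-lemma form (no Palais--Smale assumption), and finally derive a uniform norm bound on the resulting (PS) sequence by exploiting the sharpened threshold $\mu>m\mathcal{C}_2/\mathcal{C}_1$ in \eqref{A-R} together with the two-sided bounds on $h$ supplied by \eqref{g_dois}.

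For the local geometry at $0$, I would bound the quasilinear part of $I$ from below by $(\mathcal{C}_1/m)\int b(x)^{1-m}|L(u)|^m\,\dx$ using $H(t)\geq(\mathcal{C}_1/m)|t|^m$. In case (i), Proposition \ref{p_norma_comparar} identifies $\int|L_\lambda(u)|^m\,\dx$ with $\|u\|_\lambda^m$, and combining \eqref{constant_ss} with the Sobolev embedding $\mathcal{D}_\rad^{2,m}(\mathbb{R}^N)\hookrightarrow L^{m_*}(\mathbb{R}^N)$ yields $I(u)\geq c_1\|u\|_\lambda^m-c_2\|u\|_\lambda^{m_*}$. In case (ii), Corollary \ref{l_norma_comparar_sub}, Proposition \ref{p_normasirakov} and \eqref{bem_def} give the analogous inequality $I(u)\geq(c_1-\varepsilon)\|u\|_V^m-C_\varepsilon\|u\|_V^{p_\varepsilon}$ for prescribed $\varepsilon>0$, with $p_\varepsilon>m$. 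Since the second exponent strictly exceeds $m$ in both cases, a small sphere about the origin is obtained on which $I\geq\alpha>0$. To construct an endpoint with $I(e)<0$, I would invoke $F(t_\ast)>0$ (respectively \eqref{posi_algum}) and build a cutoff $\varphi\in E$ with $\int F(x,\varphi)\,\dx>0$ exactly as in Remark (iv). Integrating \eqref{A-R} yields $F(x,t)\geq A|t|^\mu-B$ with $\mu>m$, while \eqref{g_dois} bounds the quasilinear part of $I(t\varphi)$ above by a multiple of $t^m$, so $I(t\varphi)\to-\infty$ as $t\to\infty$. The mountain pass theorem then delivers a sequence $(u_k)\subset E$ with $I(u_k)\to c(I)\in(0,\infty)$ and $I'(u_k)\to 0$.

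The decisive step is boundedness. Setting $w_k:=b(x)^{-1}L(u_k)$ and observing that $h(w_k)L(u_k)=b(x)h(w_k)w_k$, I would compute
\[
I(u_k)-\tfrac{1}{\mu}I'(u_k)\cdot u_k \;=\; \int_{\mathbb{R}^N} b(x)\Bigl[H(w_k)-\tfrac{1}{\mu}h(w_k)w_k\Bigr]\dx \;-\; \int_{\mathbb{R}^N}\Bigl[F(x,u_k)-\tfrac{1}{\mu}f(x,u_k)u_k\Bigr]\dx.
\]
By \eqref{A-R}, the bracketed integrand in the second term is pointwise non-positive, so it contributes non-negatively after subtraction. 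By \eqref{g_dois}, the first integrand is at least $(\mathcal{C}_1/m-\mathcal{C}_2/\mu)|w_k|^m$, and the strict inequality $\mu>m\mathcal{C}_2/\mathcal{C}_1$ makes this coefficient strictly positive. Using $\beta\leq b(x)\leq\|b\|_\infty$, one obtains $c(I)+o(1)+o(\|u_k\|_E)\geq\kappa\int|L(u_k)|^m\,\dx$ for some $\kappa>0$, and this last integral equals $\|u_k\|_\lambda^m$ in case (i) (Proposition \ref{p_norma_comparar}) or is equivalent to $\|u_k\|_V^m$ in case (ii) (Corollary \ref{l_norma_comparar_sub}). Since $m>1$, boundedness of $(u_k)$ follows.

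The main obstacle is precisely this sharpened AR threshold: because the quasilinear part of $I$ is not $m$-homogeneous but only trapped between $(\mathcal{C}_1/m)|\cdot|^m$ and $(\mathcal{C}_2/m)|\cdot|^m$, the classical assumption $\mu>m$ does not suffice, and one must carefully track both constants in \eqref{g_dois} so that the Nehari-type correction $\mathcal{C}_1/m-\mathcal{C}_2/\mu$ remains strictly positive. This is exactly what the hypothesis $\mu>m\mathcal{C}_2/\mathcal{C}_1$ is designed to guarantee, and the whole boundedness argument hinges on it.
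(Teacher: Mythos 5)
Your overall route is the paper's: establish the mountain-pass geometry of $I$ on $\mathcal{D}^{2,m}_{\rad}(\mathbb{R}^N)$ (resp. $W^{2,m}_V(\mathbb{R}^N)$), extract a Palais--Smale sequence at level $c(I)$ without any compactness assumption (the paper uses Ekeland's principle, you invoke the deformation form of the mountain pass theorem -- an immaterial difference), and prove boundedness from $I(u_k)-\mu^{-1}I'(u_k)\cdot u_k$, using \eqref{A-R} to discard the nonlinear term and \eqref{g_dois} to bound $H(w)-\mu^{-1}h(w)w$ below by $(\mathcal{C}_1/m-\mathcal{C}_2/\mu)|w|^m$; this is precisely the paper's estimate \eqref{psb_final}, and your local analysis near $0$ matches the paper's as well.

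The one step that does not hold as stated is the claim that ``integrating \eqref{A-R} yields $F(x,t)\geq A|t|^{\mu}-B$.'' Under the present hypotheses this uniform pointwise bound is false in general: \eqref{A-R} only says that $t\mapsto F(x,t)|t|^{-\mu}$ is nondecreasing in $|t|$ on each half-line, and $F$ is allowed to change sign -- in case (ii) the sole positivity assumption is the local condition \eqref{posi_algum}, so for instance $F(x,t)=k(x)|t|^{\mu}$ with $k$ positive on $B_R(x_0)$ but negative elsewhere satisfies \eqref{bem_def}, \eqref{A-R}, \eqref{posi_algum} and admits no constants $A>0$, $B$ with $F(x,t)\geq A|t|^{\mu}-B$; even in the autonomous case (i) the bound can fail for $t<0$. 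Consequently your justification that $I(t\varphi)\to-\infty$ is incomplete. The repair is exactly the paper's argument and uses only the cutoff $\varphi\geq 0$ you already constructed: since $F(x,s)s^{-\mu}$ is nondecreasing for $s>0$, one has pointwise $F(x,t\varphi(x))\geq t^{\mu}F(x,\varphi(x))$ for $t\geq 1$ (trivially where $\varphi=0$), hence $\int_{\mathbb{R}^N}F(x,t\varphi)\dx\geq t^{\mu}\int_{\mathbb{R}^N}F(x,\varphi)\dx$ with the last integral strictly positive by \eqref{posi_algum} (resp. $F(t_\ast)>0$), so that $I(t\varphi)\leq (\mathcal{C}_2/m)\beta^{1-m}t^{m}\int_{\mathbb{R}^N}|L(\varphi)|^{m}\dx-t^{\mu}\int_{\mathbb{R}^N}F(x,\varphi)\dx\to-\infty$ as $t\to\infty$, since $\mu>m\mathcal{C}_2/\mathcal{C}_1\geq m$. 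With this substitution your proposal is correct and coincides with the paper's proof.
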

	\begin{proof}
		Suppose \eqref{posi_algum} and $ \xi _R \in C^\infty _0 (\mathbb{R},[0,t_0]) $ such that $\xi_R(t) = t_0 $ if $|t| \leq R$ and $\xi _R(t)= 0$ if $|t| > R+1.$ Setting $v(x) = \xi_R(|x-x_0|),$ we have $v \in W_V^{2,m} (\mathbb{R}^N )$ and
		\begin{align*}
			\int_{\mathbb{R}^N} F(x,v ) \dx &= \int _{B_R (x_0)} F(x,t_0) \dx + \int_{B_{R+1}(x_0)\setminus B_R(x_0)} F(x,v ) \dx\\
			&\geq |B_R|\inf _{B_R(x_0)} F(x,t_0) + |B_{R+1}\setminus B_R|\inf_{C_R(x_0,t_0)} F(x,t) >0.
		\end{align*}
		Suppose $f(x,t) \equiv f(t)$ and $F(t_\ast)>0.$ Taking $\eta _R (x) = \xi _R (|x|),$ with $t_\ast$ instead of $t_0$ in the definition of $\xi _R.$ Then $\eta _R \in \mathcal{D}^{2,m}_\rad (\mathbb{R}^N)$ and
		\begin{align*}
			\int_{\mathbb{R}^N} F(\eta_R ) \dx &= \int _{B_R (x_0)} F(t_\ast) \dx + \int_{B_{R+1}(x_0)\setminus B_R(x_0)} F(\eta _R ) \dx\\
			&\geq F(t_\ast)|B_R|-|B_{R+1}\setminus B_R|\left(\max_{t \in [0,t_\ast]} |F(t)|\right).
		\end{align*}
		Thus, there exist $C_1,\ C_2>0$ such that for $R$ large,
		\begin{equation*}
			\int_{\mathbb{R}^N} F(\eta_R ) \dx\geq C_1 R^N - C_2 R^{N-1}>0.
		\end{equation*}
		By \eqref{constant_ss} and the Sobolev embedding $\mathcal{D}^{2,m}(\mathbb{R}^N) \hookrightarrow L^p (\mathbb{R}^N)$ we have,
		\begin{equation*}
			I(u) \geq \left((\mathcal{C}_1/m) \| b (x) \| _\infty ^{1-m}- \overline{C}\| L_\lambda (u) \|_m ^{p-m}\right)\| L_\lambda (u) \|^m_m,\ \forall \, u \in \mathcal{D}^{2,m}_\rad (\mathbb{R}^N),
		\end{equation*}
		for some $\overline{C}>0.$ Thus $I(u)\geq \delta >0,$ if $\| L_\lambda (u) \|_m=r$ is small. Notice that \eqref{A-R} is equivalent to $d/dt (F(t)t ^{-\mu}) \geq 0,$ for $t> 0.$ If $v(x)=\eta _R (x) = \xi _R (|x|),$ then
		\begin{equation*}
			\int_{\mathbb{R}^N}F(tv)\dx \geq t^\mu \int_{\mathbb{R}^N} F(v)\dx,\quad\text{whenever } t>1.
		\end{equation*}
		Hence, as $t \rightarrow \infty,$
		\begin{equation*}
			I(tv) \leq \mathcal{C}_2 t^m \int_{\mathbb{R}^N} |L_\lambda(v)|^m\dx - t^\mu \int_{\mathbb{R}^N} F(v)\dx \rightarrow - \infty.
		\end{equation*}
		Thus by the Ekeland Variational Principle, there exists $(u_k)\subset \mathcal{D}^{2,m}_\rad (\mathbb{R}^N)$ such that $I(u_k) \rightarrow c(I)$ and $I'(u_k) \rightarrow0.$ By \eqref{g_dois}, for large $k,$ we have
		\begin{equation}\label{psb_final}
			\begin{aligned}
				c(I)+1+\| L_\lambda (u_ k) \|_m  	&\geq I(u_k) - \frac{1}{\mu} I'(u_k) \cdot u_k \\
				&\geq \left(\frac{\mathcal{C}_1}{m} - \frac{\mathcal{C}_2}{ \mu }\right) \| b(x) \|_\infty ^{1-m} \|L_\lambda (u_ k)\|_{m} ^m,		
			\end{aligned}
		\end{equation}
		which implies that $(u_k)$ is bounded in $\mathcal{D}^{2,m}_\rad (\mathbb{R}^N)$ and (i) holds.
		
		Using \eqref{bem_def} and $W_V^{2,m}(\mathbb{R}^N) \hookrightarrow L^{\theta} (\mathbb{R}^N),$ $m \leq \theta <p,$ we have
		\begin{equation*}
			I(u) \geq \left( (\mathcal{C}_1/m) \| b (x) \| _\infty ^{1-m}  - \varepsilon C_0- \overline{C}_\varepsilon \|L(u)  \|_m ^{p_\varepsilon-m} \right)\|L(u) \|_m ^m,\ \forall \, u \in W_V^{2,m}(\mathbb{R}^N),
		\end{equation*}
		for some $\overline{C}_0 > 0. $ Consequently, taking $\varepsilon>0$ small enough, $I(u)\geq \delta >0,$ if $\| L (u) \|_m=r$ is small. Also, from the fact that $d/dt (F(x,t)t ^{-\mu}) \geq 0,$ for $t> 0,$ is equivalent to \eqref{A-R}, we can take $v(x) = \xi_R(|x-x_0|),$ to obtain
		\begin{equation*}
			I(tv) \leq (\mathcal{C}_2/m) \beta ^{1-m} t^m \int_{\mathbb{R}^N} |L(v)|^m\dx - t^\mu \int_{\mathbb{R}^N} F(x,v)\dx \rightarrow - \infty,\text{ as }t \rightarrow \infty.
		\end{equation*}
		Therefore, there exists $(u_k)\subset W_V^{2,m} (\mathbb{R}^N)$ such that $I(u_k) \rightarrow c(I)$ and $I'(u_k) \rightarrow0.$ Once again by \eqref{g_dois}, we can argue as in \eqref{psb_final} (replacing $L_\lambda$ by $L$) to conclude that $(u_k)$ is bounded in $W_V^{2,m} (\mathbb{R}^N)$ and (ii) holds.
	\end{proof}
	
	\subsection{Self-similar functions and some properties}\label{s_selfsimilar}
	\begin{definition} A function  $F \in C(\mathbb{R})$ 
		is self-similar with factor $\gamma >1$ and power $\sigma>0$ if 
		\begin{equation}\label{selfsimilar}
			F(t)=\gamma ^{-Nj} F(\gamma ^{\sigma j} t ), \;  \forall j \in \mathbb{Z}, \; t \in \mathbb{R}.
		\end{equation}
	\end{definition}
	
	\begin{proposition} 
		\noindent$\mathrm{(i)}$ A  continuously differentiable functions  $F$ is self-similar with factor $\gamma>1$ and power $\sigma>0$ if and only if 
		$	F'(t)=\gamma ^{(\sigma -N)j} F'\left(\gamma ^{\sigma j}  t \right), \; \forall j \in \mathbb{Z}, \ t \in \mathbb{R}.$
		Consequently,  $f \in C(\mathbb{R})$ is self-similar if its primitive satisfies \eqref{selfsimilar}.
		
		\noindent$\mathrm{(ii)}$ Self-similar functions behave like fractals; that is, they are completely determined once is known their value in some interval. 
		Let $I_j = [\gamma ^{\sigma j}, \gamma ^{\sigma (j+1)} ),$ $j \in \mathbb{Z}$ and $F_0: I_0 \rightarrow \mathbb{R}$ be a continuous function such that $\lim _{t \rightarrow \gamma ^\sigma }F_0 (t) = \gamma ^N F_0 (1)$. 
		If we define $F(t) = \gamma ^{Nj} F_0(\gamma ^{-\sigma j} t)$ if $t\in I_j$ for $j \in \mathbb{Z}$, we can see that $F$ is a self-similar function.
		
		\noindent$\mathrm{(iii)}$ Assume that $2m < N$ and $ F(t)$ is self-similar with factor $\gamma>1$ and power $\sigma =(N-2m)/m$. Then, for $m_\ast = mN/(N-2m),$ there exists $C>0$ such that $|F(t)| \leq C |t|^{m_\ast}, \;  \forall \, t\in \mathbb{R}.$ Moreover, if $F \in C^{k} (\mathbb{R}),$ then $|F^{(k)}(t)| \leq |t|^{m_\ast-k}, \; \forall \, t \in \mathbb{R},$ provided that $m_\ast > k.$
		
		\noindent$\mathrm{(iv)}$ Under the assumptions of the item  \textbf{(iii)}, $m_\ast = mN/(N-2m), \; u \in L^{m_\ast} (\mathbb{R}^N)$ and $ j \in \mathbb{Z}$, we have
		\begin{equation*}
			\int _{\mathbb{R}^N} F \left(\gamma ^{\frac{N-2m}{m}j} u(\gamma ^j x )\right) \dx = \int _{\mathbb{R}^N} F (u) \dx.
		\end{equation*}
		
		\noindent$\mathrm{(v)}$ Let $F(t)$ be a locally Lipschitz function and assume  the hypotheses of  item (iii),  then for  $a_1, \ldots, a_{M} \in \mathbb{R}$ there is $C=C(M)>0$ such that
		\begin{equation*}
			\left|F\left(\sum _{n=1} ^{M} a_n \right) - \sum_{n=1}^{M}F(a_n )\right| \leq C(M) \sum _{m \neq n \in \{1,\ldots,M\}} |a_n| ^{m_\ast -1} |a_m|.
		\end{equation*}
	\end{proposition}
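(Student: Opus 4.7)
The plan is to exploit the reduction to the fundamental interval $I_0=[1,\gamma^\sigma)$ provided by the self-similar identity, with each item of the proposition following from that reduction together with an elementary calculation.

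Items (i), (ii), and (iv) are direct. For (i), differentiating $F(t)=\gamma^{-Nj}F(\gamma^{\sigma j}t)$ in $t$ produces the stated relation for $F'$; conversely, integrating from $0$ and using that the self-similarity forces $F(0)=\gamma^{-Nj}F(0)$ for every $j$, hence $F(0)=0$, recovers self-similarity of the primitive. For (ii), the formula $F(t)=\gamma^{Nj}F_0(\gamma^{-\sigma j}t)$ on $I_j$ is continuous inside each $I_j$ by composition, and continuity across the endpoint $\gamma^{\sigma(j+1)}$ reduces precisely to the matching condition $\lim_{t\to\gamma^\sigma}F_0(t)=\gamma^N F_0(1)$. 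Item (iv) is a change of variables $y=\gamma^j x$ (Jacobian $\gamma^{Nj}$) combined with the self-similar identity applied pointwise: $F(\gamma^{\sigma j}u(y))=\gamma^{Nj}F(u(y))$.

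For (iii), I would observe that $F(0)=0$ from (i). For any $t\neq 0$, pick the unique $j=j(t)\in\mathbb{Z}$ with $\gamma^{\sigma j}|t|\in[1,\gamma^\sigma)$, so that $|F(t)|=\gamma^{-Nj}|F(\gamma^{\sigma j}t)|\leq \gamma^{-Nj}\sup_{|s|\in[1,\gamma^\sigma)}|F(s)|$; since $\gamma^{-\sigma j}\leq \gamma^\sigma|t|$ and $N/\sigma=m_\ast$, we obtain $\gamma^{-Nj}\leq C|t|^{m_\ast}$, which is the claimed bound. The $C^k$ version follows by iterating (i) to get $F^{(k)}(t)=\gamma^{(k\sigma-N)j}F^{(k)}(\gamma^{\sigma j}t)$ and applying the same reduction, yielding $|F^{(k)}(t)|\leq C|t|^{m_\ast-k}$ whenever $m_\ast>k$ (so the exponent $k\sigma-N$ is negative).

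Item (v) is the main technical step; I would prove it by induction on $M$, reducing to the base case $M=2$. For $M=2$, assume without loss of generality $|a_2|\leq|a_1|$. Since $F$ is locally Lipschitz, $F'$ exists a.e., and the reduction of (iii) applied either to $F'$ on sets of continuity or directly to the local Lipschitz constants of $F$ on the intervals $I_j$ gives $|F'(t)|\leq C|t|^{m_\ast-1}$ for a.e.~$t$. Using $F(0)=0$, one writes
\begin{equation*}
F(a_1+a_2)-F(a_1)-F(a_2)=\int_0^1\bigl[F'(a_1+ta_2)-F'(ta_2)\bigr]\,a_2\,\mathrm{d}t,
\end{equation*}
and the integrand is bounded in modulus by $C(|a_1|+|a_2|)^{m_\ast-1}|a_2|+C|ta_2|^{m_\ast-1}|a_2|\leq C|a_1|^{m_\ast-1}|a_2|$ since $|a_2|\leq|a_1|$ and $m_\ast-1>0$, which is dominated by the symmetric target $C(|a_1|^{m_\ast-1}|a_2|+|a_2|^{m_\ast-1}|a_1|)$. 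The inductive step expresses the error for $M$ terms as the $M=2$ error applied to $(S_{M-1},a_M)$ with $S_{M-1}=\sum_{n<M}a_n$, plus the error at step $M-1$, and closes using $|S_{M-1}|^{m_\ast-1}\leq (M-1)^{m_\ast-1}\max_{n<M}|a_n|^{m_\ast-1}$. The main obstacle lies in the $M=2$ case, where the symmetric form of the bound relies on the case distinction $|a_2|\leq|a_1|$ and on transferring the pointwise estimate of (iii) to the almost-everywhere derivative of a merely locally Lipschitz $F$.
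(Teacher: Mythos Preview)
Your argument is correct and follows the standard reduction to the fundamental interval $I_0$, which is precisely the approach the paper defers to by citing \cite[Lemma~6.4]{paper1}; the paper gives no independent proof beyond indicating that one replaces the power $(N-2s)/2$ there by $(N-2m)/m$.

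One small point worth tightening in part~(i): in the converse direction your phrase ``the self-similarity forces $F(0)=\gamma^{-Nj}F(0)$'' is circular, since self-similarity of $F$ is what you are establishing. Integrating the derivative relation from $0$ only yields $F(t)-F(0)=\gamma^{-Nj}\bigl(F(\gamma^{\sigma j}t)-F(0)\bigr)$, so the correct statement of the equivalence is that $F$ is self-similar if and only if $F'$ satisfies the stated relation \emph{and} $F(0)=0$ (the latter being automatic whenever $F$ is self-similar, by taking $t=0$ in the definition). This does not affect the subsequent items, where you use $F(0)=0$ as a consequence of self-similarity of $F$ rather than as an input.
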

	\begin{proof} The proof  is adapted from \cite[Lemma 6.4]{paper1} replacing the power $\sigma = (N-2s)/2$ by $\sigma = (N-2m)/m.$
	\end{proof}

	This class of  self-similar functions was introduced in \cite{tintass,tintabook} and later considered in \cite{paper1} to investigate a class of  fractional Schrodinger equations. They were also considered in \cite{selfsimilar} and are closely related to the notion of fractals. We follow the idea of \cite[Section 6]{paper1} to show that weak profile decomposition results like Theorem \ref{teo_profcrit} motivate extensions of the notion of nonlinearities with critical Sobolev growth and the definition of self-similar functions. As it can be seen in the next examples, self-similar functions can oscillate about a critical power $|t|^{mN/(N-2m)}$ and may not satisfy that $t \mapsto t^{-1}f(x,t)$ is increasing.
	
	\begin{example} In this work, we consider self-similar functions with power $\sigma = (N-2m)/m$ or $\sigma = N/m,$ and with factor $\gamma =2.$
		Typical examples are:
		
		\noindent$\mathrm{(i)}$ $F(t)=|t|^{mN/(N-2m)},$ with power $\sigma =(N-2m)/m$ any factor $\gamma>1$;
		
		\noindent$\mathrm{(ii)}$ $H(t)=\cos (\ln |t| ) |t|^{mN/(N-2m)},$ $H(0):=0,$ with factor $\gamma = e^{2m \pi /(N-2m)}$ and power $\sigma = (N-2m)/N.$
		
		\noindent$\mathrm{(iii)}$ $\hat{H}(t)=\exp\{ \sin (\Theta \ln |t| ) \}|t|^{mN/(N-2m)},$ $\hat{H}(0):=0,$ where $\Theta = 2 \pi m /(\ln2 (N-2m) ),$ is self-similar with $\gamma = 2$ and $\sigma = (N-2m)/N.$
	\end{example}

	\begin{example}\label{ex_ss_g}The pure power $|t|^{q-2}t$ with $q=2^\ast$ is the simplest example of self-similar homeomorphism, for the space $\mathcal{D}^{1,2}(\mathbb{R}^N).$ To have a nontrivial example take  $\gamma>1, \;  I_0 = (1,\gamma ^{N/m})$ and $h_0(t) \in C(\overline{I_0})\cap C^1(I_0) $ satisfying $h_0(\gamma ^{N/m} )=\gamma ^N h_0 (1),$ \eqref{g_dois}, \eqref{g_tres} and $h_0' (t)\geq 0$ for all $ t\in   I_0.$  Setting $I_j = [\gamma ^{\sigma j}, \gamma ^{\sigma (j+1)} ),$ $j \in \mathbb{Z}$, for each $t \in I_j,$ define $h_1(t) = \gamma ^{(N-N/m)j}h_0 (\gamma ^{-(N/m) j} t ),$ if $t>0,$ $h_1 (0) = 0 $ and $h_2(t) = -h_1(t),$ if $t<0.$ Then the function given by $h(t):=h_1(t),$ if $t \geq 0,$ and $h(t) = h_2(t)$ if $t<0,$ is a nondecreasing homeomorphism and taking $g(t) = h^{-1}(t),$ we see that $g(t)$ satisfies \eqref{g_dois}, \eqref{g_tres} and \eqref{g_quatro}, when $\gamma =2$ (see \cite[p. 116]{tintabook}).
	\end{example}

	\section{Regularity results and Pohozaev type identity}\label{s_regularity}
	
	\subsection{Regularity results}  First we have  pointwise estimate for $u \in \mathcal{D}^{1,\theta }_{\rad}(\mathbb{R}^N)$.

	\begin{lemma}\label{l_radial} Let $N> \theta >1$.  Every function $u \in \mathcal{D}^{1,\theta }_{\rad}(\mathbb{R}^N)$  is almost everywhere equal to a funtion $\overline{u} \in C(\mathbb{R}^N \setminus \{0\} )$ such that
		\begin{equation*} 
			|\overline{u}(x)|\leq C \| \nabla u \| _\theta |x|^{\frac{\theta-N }{\theta}} ,\ \forall \, x \in \mathbb{R}^N \setminus \{0\},
		\end{equation*}
		where  $C$ depends only on $N$ and $ \theta$. 
		Consequently, if  $u \in \mathcal{D}^{2,\theta }_{\rad}(\mathbb{R}^N),$ then $\overline{u} \in C^1(\mathbb{R}^N \setminus \{0\} ),$
		\begin{equation*}
			|\overline{u} (x)|\leq C \| \nabla u \| _{\frac{\theta N}{N-\theta }} |x|^{\frac{2\theta-N }{\theta}} \quad\text{and}\quad|\nabla \overline{u}  (x)| \leq C \| \Delta u \| _{\theta } |x| ^{\frac{\theta - N}{\theta }} ,\ \forall x \in \mathbb{R}^N \setminus \{0\}.
		\end{equation*}
		In particular, for a smooth bounded domain $ \Omega\subset \mathbb{R}^N $ such that $ \overline{\Omega } \cap \{0\}= \emptyset,$ there is $ \beta \in (N, \infty) $ such that
		$$
		|\overline{u}(x)-\overline{u}(y)|\leq C_\Omega|x-y|^{1-N/\beta}\|u\|_{W^{1,\beta }(\Omega ) } \quad \text{for all} \quad x,y \in \Omega.
		$$
	\end{lemma}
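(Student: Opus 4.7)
The plan is to prove a weighted pointwise decay estimate for radial functions by combining the one-dimensional fundamental theorem of calculus with Hölder's inequality, extend it to the full homogeneous Sobolev space by density, and then upgrade to the second-order version via Sobolev embedding together with the radial identity for the Laplacian. The Hölder statement on $\Omega$ is then a direct consequence of Morrey's embedding.

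For the first estimate I take $u\in C^\infty_{0,\rad}(\mathbb{R}^N)$ and write $u(r)=-\int_r^\infty u'(s)\,\mathrm{d}s$. Splitting the integrand as $u'(s)\,s^{(N-1)/\theta}\cdot s^{-(N-1)/\theta}$ and applying Hölder with exponents $\theta$ and $\theta'=\theta/(\theta-1)$ gives
\begin{equation*}
|u(r)|\leq\Bigl(\int_r^\infty s^{N-1}|u'|^\theta\,\mathrm{d}s\Bigr)^{1/\theta}\Bigl(\int_r^\infty s^{-(N-1)/(\theta-1)}\,\mathrm{d}s\Bigr)^{1/\theta'}.
\end{equation*}
The hypothesis $N>\theta$ makes the weight integral convergent with value a multiple of $r^{(\theta-N)/(\theta-1)}$, while the first integral is bounded by a constant times $\|\nabla u\|_\theta^\theta$ (using $|\nabla u|=|u'|$ for radial $u$). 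Combining and computing $\tfrac{\theta-N}{\theta-1}\cdot\tfrac{\theta-1}{\theta}=\tfrac{\theta-N}{\theta}$ yields $|u(r)|\leq C\|\nabla u\|_\theta\, r^{(\theta-N)/\theta}$. Applying this same bound to differences $u_k-u_\ell$ of a smooth radial sequence approximating a general $u\in\mathcal{D}^{1,\theta}_{\rad}(\mathbb{R}^N)$ produces a Cauchy sequence uniformly on every compact subset of $\mathbb{R}^N\setminus\{0\}$; the pointwise limit $\overline{u}$ is continuous there, agrees with $u$ almost everywhere (identifying limits in $L^{\theta N/(N-\theta)}_{\loca}$ by Sobolev), and inherits the estimate.

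For the second-order estimates I first invoke the Calderón--Zygmund/Sobolev chain $\mathcal{D}^{2,\theta}(\mathbb{R}^N)\hookrightarrow \mathcal{D}^{1,\theta N/(N-\theta)}(\mathbb{R}^N)$ and apply the first step with $\theta$ replaced by $\theta^\ast:=\theta N/(N-\theta)$; the exponent becomes $1-N/\theta^\ast=(2\theta-N)/\theta$, yielding the bound on $|\overline{u}(x)|$. For $|\nabla\overline{u}|$, I exploit the radial identity
\begin{equation*}
(r^{N-1}u'(r))'=r^{N-1}\Delta u(r),
\end{equation*}
which, combined with $r^{N-1}u'(r)\to 0$ as $r\to 0^+$ for smooth radial $u$, gives $r^{N-1}u'(r)=\int_0^r s^{N-1}\Delta u(s)\,\mathrm{d}s$. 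Hölder applied to $(|\Delta u|\,s^{(N-1)/\theta})\cdot s^{(N-1)/\theta'}$ with exponents $\theta$ and $\theta'$ produces
\begin{equation*}
|u'(r)|\leq C\|\Delta u\|_\theta\, r^{-(N-1)+N(\theta-1)/\theta}=C\|\Delta u\|_\theta\, r^{(\theta-N)/\theta}.
\end{equation*}
Applying this to differences of an approximating sequence shows $\nabla u_k$ also converges uniformly on compacts of $\mathbb{R}^N\setminus\{0\}$, so $\overline{u}\in C^1(\mathbb{R}^N\setminus\{0\})$.

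For the final Hölder statement, on $\Omega$ with $\overline{\Omega}\cap\{0\}=\emptyset$ the factor $|x|$ is bounded above and below, so the pointwise bounds from the previous step give $\overline{u},\nabla\overline{u}\in L^\infty(\Omega)\subset L^\beta(\Omega)$ for every $\beta\in(N,\infty)$, and the Morrey embedding $W^{1,\beta}(\Omega)\hookrightarrow C^{0,1-N/\beta}(\overline{\Omega})$ yields the conclusion. I expect the principal technical point to be the rigorous passage from $C^\infty_{0,\rad}$ to general elements of $\mathcal{D}^{1,\theta}_{\rad}$ and $\mathcal{D}^{2,\theta}_{\rad}$—in particular, justifying that the identity $r^{N-1}u'(r)=\int_0^r s^{N-1}\Delta u(s)\,\mathrm{d}s$ persists in the limit. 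This I address by the density argument: the pointwise estimates applied to $u_k-u_\ell$ yield locally uniform convergence of $u_k$ and $u_k'$ on $\mathbb{R}^N\setminus\{0\}$, which permits passage to the limit on both sides of the identity and on both sides of the Hölder bound.
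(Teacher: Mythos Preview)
Your proof is correct and follows essentially the same approach as the paper, which only gives a one-sentence sketch invoking ``standard arguments'', the embedding $\mathcal{D}^{2,\theta}\hookrightarrow\mathcal{D}^{1,\theta N/(N-\theta)}$, and the fact that $\varphi_{x_j}\in\mathcal{D}^{1,\theta}$, together with a reference to Strauss. Your argument fleshes out precisely these standard details: the Strauss-type integral/H\"older estimate for the first-order bound, the Sobolev embedding for the bound on $|\overline{u}|$, and a direct use of the radial Laplacian identity $(r^{N-1}u')'=r^{N-1}\Delta u$ for the gradient bound (a clean alternative to manipulating the non-radial components $\varphi_{x_j}$); the density passage and the final Morrey step are handled correctly.
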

	\begin{proof}The proof follows by standard arguments once we take into account the continuous embedding $\mathcal{D}^{2,\theta}(\mathbb{R}^N) \hookrightarrow \mathcal{D}^{1,\theta N/(N-\theta)}(\mathbb{R}^N)$ and that for $\varphi \in \mathcal{D}^{2,\theta}(\mathbb{R}^N),$ we have $\varphi_{x_j} \in \mathcal{D}^{1,\theta}(\mathbb{R}^N),$ for $j=1,\ldots, N$ (see also \cite{strauss} for further details).
	\end{proof}
	\begin{proof}[Proof of Theorem \ref{p_regcrit}]
		If $\Omega$ is a bounded domain with smooth boundary such that $0 \not \in \overline{\Omega } ,$ then $v=h \left((b(|x|))^{-1} L_\lambda (u) \right) \in L^q (\mathbb{R}^N)$ is a very weak solution of the equation $-\Delta v = \hat{f} (x)$ in $\Omega,$ where $\hat{f} (x) = f(|x|,u) + \lambda |x|^{-2}v,$ that is, 
		$\int_{\Omega} v \Delta \varphi \dx  = \int_{\Omega} \hat{f} (x) \varphi \dx  $ for all $\varphi \in C_c^{\infty} (\Omega)  $.
		By Lemma \ref{l_radial}, we see that $\hat{f} (x) \in L^q(\Omega),$ and therefore we can apply \cite[Proposition 1.1]{veryweak}, to conclude that $v\in W^{2,q}_{\loca} (\Omega ),$ and $(u,v)$ is a radial strong solution for Syst.~\eqref{S}. Moreover,
		\begin{equation*}
			\int_{\mathbb{R}^N}|L_\lambda (v)|^l \dx= \int_{\mathbb{R}^N}|f(x,u)|^l \dx\leq C\int_{\mathbb{R}^N}|u|^p \dx,
		\end{equation*}
		and so, using Corollary \ref{c_charaD}, we see that $v\in \mathcal{D}^{2,l}_\rad (\mathbb{R}^N).$
	\end{proof}
	In the context of Theorem \ref{p_regcrit}, one can notice that the decay given by Lemma \ref{l_radial} cannot be applied at the origin. Therefore, when $\lambda =0$, we need a different approach from the one above and some additional assumptions. To study this case, we improve some arguments on regularity for hamiltonian systems contained in \cite{hulshof,vandervorst_best}.
	\begin{proof}[Proof of Theorem \ref{c_regularity_crit_0}]
		We first observe that by Theorem \ref{p_regcrit}, we only need to prove that $\limsup_{x \rightarrow 0}|u(x)|<\infty.$ Also, let us recall that here we have \eqref{HYPcrit}, and so $p,q > N/(N-2)$.
		
		\textit{Case $p,q > 2$: } The super linear case follows by using a similar argument as in \cite[Theorem 3.1]{hulshof}. Hence, if $N=3$ or $N=4$ we are done.
		
		\textit{Case $p\leq2$ or $q\leq 2$ ($N\geq 5 $):}  First, let us consider $q\leq 2,$ which is equilvent to $p\geq 2_\ast = 2N/(N-4)>2.$  For this case, we separate the proof in five steps.
		
		\noindent \textit{Step 1.} We prove that for each $\varepsilon >0$ there exists $q_\varepsilon \in L^{p/(p-2)}$ and $f_\varepsilon \in L^\infty (\mathbb{R}^N) \cap L^{p/(p-1)} (\mathbb{R}^N)$ such that $f(x,u) = a(x)u = q_\varepsilon u + f_\varepsilon,$ almost everywhere in $\mathbb{R}^N,$ where $a(x) = f(x,u)/u$ and $\| q_\varepsilon \|_{p/(p-2)} <\varepsilon.$
		
		To see this, following \cite[Lemma B2]{vandervorst_best} we explicitly define $q_\varepsilon$ and $f_\varepsilon$ for which the next properties can be checked:
		$$
		\begin{aligned}
			\mathcal{A}_k = \;  & \{ x \in \mathbb{R}^N : |a(x)| < k \}, \\
			\mathcal{U}_n = \; &  \{ x \in \mathbb{R}^N : |u(x)| < n \} ,\text{ where } k,n \, \in \mathbb{N},\\
			q_\varepsilon (x)= \; &
			\left\{
			\begin{aligned}
				&\frac{1}{n_\varepsilon}a(x), \quad  & \text{if} & \quad   x \in  \mathcal{A}_k \cap \mathcal{U}_n,  \\
				& a(x),\quad & \text{if} & \quad   x \in \mathbb{R}^N \setminus (\mathcal{A}_k \cap \mathcal{U}_n),
			\end{aligned}
			\right.
			\\
			f_\varepsilon (x) = \; & (a(x) - q _\varepsilon(x) )u(x).
		\end{aligned}
		$$
		Therefore, $f_\varepsilon = 0$ on $\mathbb{R}^N \setminus (\mathcal{A}_k \cap \mathcal{U}_n).$
		
		\noindent \textit{Step 2.} Denote $\mathcal{E}(w ) = (- \Delta ) \left(h ((b(x))^{-1} (-\Delta w))\right)$ the fourth-order operator given by \eqref{P}. We prove that there exists a well defined solution operator $\mathcal{S} = \mathcal{E}^{-1} : L^{l}(\mathbb{R}^N) \rightarrow \mathcal{D}^{2,m}(\mathbb{R}^N).$ Precisely,  given $\xi \in L^{l}(\mathbb{R}^N)$ there exists a unique $u_\xi $ ($=\mathcal{E}^{-1}(\xi )$) $\in \mathcal{D}^{2,m}(\mathbb{R}^N)$ such that $\mathcal{E}(u_\xi) = \xi,$ in the weak sense, that is,
		\begin{equation}\label{operator_weakform}
			\int _{\mathbb{R}^N } h \left((b(x))^{-1}  (-\Delta u_\xi)\right) (-\Delta \varphi )\dx  = \int_{\mathbb{R}^N} \xi \varphi \dx,\ \forall \, \varphi \in \mathcal{D}^{2,m}(\mathbb{R}^N).
		\end{equation}
		In order to do that, we consider the functional $L_{\xi} : \mathcal{D}^{2,m} (\mathbb{R}^N) \rightarrow \mathbb{R}$ given by 
		\begin{equation*}
			L_{\xi}(w) = \int _{\mathbb{R}^N }b(x)H ((b(x))^{-1}(-\Delta w) ) \dx - \int _{\mathbb{R}^N}\xi w \dx.
		\end{equation*}
		It is easy to see that critical points of $L_{\xi}$ satisfy \eqref{operator_weakform} and so they are the solutions of $\mathcal{E}(w) = \xi,$ in the weak sense. 
		The existence and uniqueness of a critical point of $L_\xi$ are ensured by the fact that $L_\xi$ is coercive and sequentially weakly lower semicontinuous.
		
		\noindent\textit{Step 3.} We prove that the operator $\mathcal{E}^{-1}$ can be characterized as compositions of Riesz potentials. More precisely, let $\mathcal{R}=\mathcal{R}_\theta : L ^\theta (\mathbb{R}^N)\rightarrow L ^\mu (\mathbb{R}^N) ,$ $1\leq \theta < \mu <\infty,$ given by
		\begin{equation*}
			\mathcal{R}_\theta[w](x) = c_N \int _{\mathbb{R}^N} w(y) |x-y|^{2-N} \dy,\ \text{where} \ \frac{1}{\mu} = \frac{1}{\theta} - \frac{2}{N}\text{ and }c_N=\frac{\Gamma\left( \frac{N}{2} - 2\right)}{4\pi^{\frac{N}{2}} \Gamma(1)}.
		\end{equation*}
		Then $\mathcal{E}^{-1} (\xi ) = \mathcal{R}_m \left[b(x) g \left( \mathcal{R}_l [\xi] \right) \right].$
		
		To prove this, we begin by recalling that $(-\Delta ) (\mathcal{R}[\varphi]) = \varphi $ for all $\varphi \in C^\infty _0 (\mathbb{R}^N),$ also the Hardy-Littlewood-Sobolev inequality:  $\| \mathcal{R}_\theta[w] \| _\mu \leq C \| w\|_\theta, $ for all $w \in L ^\theta (\mathbb{R}^N)$ (see more details in \cite{stein}). Now let $(\varphi_k ) \subset C^\infty _0 (\mathbb{R}^N)$ such that $\varphi_k \rightarrow \xi $ in $L^l (\mathbb{R}^N).$ We have that
		\begin{equation*}
			\int _{\mathbb{R}^N}|\Delta (\mathcal{R}_l [\varphi _k - \varphi _i])|^l \dx = \int_{\mathbb{R}^N} |\varphi _k - \varphi _i|^l\dx,\ \forall \, k,i \in \mathbb{N},
		\end{equation*}
		which means that $(\mathcal{R}_l [\varphi _k])$ is a Cauchy sequence in $\mathcal{D}^{2,l}(\mathbb{R}^N),$ and so it must converge for some $v_l \in \mathcal{D}^{2,l}(\mathbb{R}^N).$ On the other hand, by the Hardy-Littlewood-Sobolev inequality (with $\theta  = l$), we see that $\mathcal{R}_l [\varphi _k] \rightarrow \mathcal{R}_l [\xi]$ in $L^{q}(\mathbb{R}^N),$ and so $v_l = \mathcal{R}_l [\xi].$ Let $\varphi \in C^\infty _0(\mathbb{R}^N),$ we have, by Lebesgue's dominated convergence theorem,
		\begin{equation*}
			\int_{\mathbb{R}^N}\left(-\Delta(\mathcal{R}_l [\xi]) \right) \varphi \dx = \lim_{k \rightarrow \infty }\int_{\mathbb{R}^N}\left(-\Delta(\mathcal{R}_l [\varphi_k]) \right) \varphi \dx  = \int_{\mathbb{R}^N}\xi  \varphi \dx.
		\end{equation*}
		Consequently, $-\Delta(\mathcal{R}_l [\xi]) = \xi.$ By the same regularity result used in Theorem \ref{p_regcrit}, we can see that $v_\xi := h \left((b(x))^{-1}  (-\Delta u_\xi)\right) \in \mathcal{D}^{2,l}(\mathbb{R}^N)$ (see \eqref{operator_weakform}). Since $-\Delta v_\xi = \xi,$ we conclude that $v_\xi = \mathcal{R}_l [\xi].$ Setting $\hat{\xi}= b(x)g(v_\xi) \in L^m(\mathbb{R}^N)$ and using an analogous argument as above, we obtain that $u_\xi = \mathcal{R}_m[\hat{\xi}] = \mathcal{R}_m [b(x)g(\mathcal{R}_l [\xi])] .$
		
		\noindent \textit{Step 4.} Given $R>0,$ we prove that the operator $\mathcal{B}_\varepsilon(w) := \mathcal{E}^{-1}(q_\varepsilon w\mathcal{X}_{B_R})$ acts in $L^p(\mathbb{R}^N)$ on $L^p(\mathbb{R}^N).$ Moreover, $\|\mathcal{B}_\varepsilon(w) \|_p \leq C_R\varepsilon^{q-1}\|w\mathcal{X}_{B_R}\|^{q-1}_p,$ for all $w \in L^p(\mathbb{R}^N),$ and some $\hat{C}_R>0$ independent of $\varepsilon.$
		
		To prove this, we note that by H\"{o}lder inequality $q_\varepsilon w \in L^{l}(\mathbb{R}^N),$ provided that $w \in L^p(\mathbb{R}^N).$ Now in order to apply the Hardy-Littlewood-Sobolev inequality, we use the characterization given in \textit{Step 3}, 
		\begin{equation*}
			\|\mathcal{B}_\varepsilon(w)\|_p \leq C_1 \|\mathcal{R}[q_\varepsilon w\mathcal{X}_{B_R}] \|^{q-1}_{\mu _p (q-1) }\leq C_2\| q_\varepsilon w\mathcal{X}_{B_R} \|_{\mu_ {p,q}} ^{q-1},
		\end{equation*}
		where
		\begin{equation*}
			\frac{1}{p} = \frac{1}{\mu_p} - \frac{2}{N}\quad \text{and}\quad\frac{1}{\mu_p (q-1)} = \frac{1}{\mu_{p,q}} - \frac{2}{N}.
		\end{equation*}
		For $\sigma = p/(p-\mu_{p,q}),$ H\"{o}lder inequality implies that $\| q_\varepsilon w \mathcal{X}_{B_R}\|_{\mu_{p,q}}\leq \|q_\varepsilon \mathcal{X}_{B_R}\| _{\sigma\mu_{p,q}} \|w \mathcal{X}_{B_R} \|_p.$ Moreover, since $q\leq 2,$ we see that
		\begin{equation*}
			\sigma \mu_{p,q} = \frac{Nq(q-1)}{2q^2 - (q-2) [(q-1)N - 2q]} \leq \frac{qN}{4q-(q-2)N} = \frac{p}{p-2},
		\end{equation*}
		and the equality occurs when $q=2.$ Therefore, $\|q_\varepsilon \mathcal{X}_{B_R}\| _{\sigma\mu_{p,q}} \leq |B_R|^\zeta \|q_\varepsilon \mathcal{X}_{B_R} \|_{p/(p-2)},$ $\zeta=(p-2)/(p-\mu_{p,q}(p-2))$ (when $q<2$), and we conclude, by Step $1,$ that $\|\mathcal{B}_\varepsilon(w)\|_p \leq C_R\varepsilon^{q-1}\|w\mathcal{X}_{B_R}\|^{q-1}_p,$ where $C_R>0$ does not depend on $\varepsilon.$
		
		\noindent \textit{Step 5.} Now let us argue by contradiction and assume that $\lim_{x\rightarrow 0}|u(x)| = +\infty.$ In this case, using \eqref{f_porbaixo}, we can assume that in the \textit{Step 1}, $k,n$ are taken in a such way that $B_\delta \subset \mathbb{R}^N \setminus( \mathcal{A}_k \cap \mathcal{U}_n),$ for some $\delta >0.$ We have seen in Theorem \ref{p_regcrit} and \textit{Step 1} that $\mathcal{E}(u) = q_\varepsilon u + f_\varepsilon.$ Hence $\mathcal{E}^{-1} (\mathcal{X}_{B_{\delta}}\mathcal{E}(u) )=\mathcal{E}^{-1}( \mathcal{X}_{B_{\delta}}(q_\varepsilon u + f_\varepsilon ) ) =\mathcal{E}^{-1}( \mathcal{X}_{B_{\delta}} q_\varepsilon u ).$ We now observe that there exists a constant $\mathcal{C}_u>0$ (depending only on $u$) such that $\mathcal{C}_u \| u \mathcal{X}_{B_{\delta}} \|_p \leq   \| \mathcal{E}^{-1} (\mathcal{X}_{B_{\delta}}\mathcal{E}(u) ) \mathcal{X}_{B_{\delta}} \|_p.$ Hence, from \textit{Step 4},
		\begin{align*}
			0 &= \|\mathcal{E}^{-1} (\mathcal{X}_{B_{\delta}}\mathcal{E}(u) ) \mathcal{X}_{B_{\delta}}-   \mathcal{E}^{-1}( \mathcal{X}_{B_{\delta}} q_\varepsilon u )\mathcal{X}_{B_{\delta}}\|_p\\
			&\geq (\mathcal{C}_u  -  C_1 \varepsilon^{q-1} \| u \mathcal{X}_{B_{\delta}} \|_p ^{q-2}) \| u \mathcal{X}_{B_{\delta}} \|_p.
		\end{align*}
		Taking $\varepsilon $ small enough we see that $u=0$ in $B_{\delta},$ a contradiction with fact that $|u(x)| \rightarrow +\infty,$ as $|x|\rightarrow 0.$
		
		\textit{Case where $p\leq 2$:} In this case we have $q> 2. $ Thus our hyphotheses allow us to consider the reduction of Syst.~\eqref{S} to the equation $-\Delta( (\hat{b}(x))^{-1} f^{-1} (-\Delta v) ) = b(x)g(v),$ and use the same analysis made before for the case that $q\leq 2$ (interchange $u$ by $v$).
	\end{proof}
	\begin{remark}
		When $q=2,$ the proof of this result is much more simpler, since the $\mathcal{E}^{-1}$ is linear.
	\end{remark}
	For the case that nonlinearities have subcritical growth in the hyperbola sense, thanks to \cite[Proposition 1.1]{veryweak}, we can approach the local regularity of Eq. \eqref{P} in a similar way to what is usually made for the one equation case (using a bootstrap type argument).
	\begin{proof}[Proof of Theorem \ref{p_regsub}]Let $\Omega$ be a open bounded domain with smooth boundary. So $v=h \left( (b(x))^{-1}L (u) \right) \in L^q (\mathbb{R}^N)$ is a very weak solution of the equation $-\Delta v = \hat{f} (x)$ in $\Omega,$ where $\hat{f} (x) = f(u)-V(x)v\in L^q(\Omega).$ So we have that $v \in W_{\loca}^{2, q}(\mathbb{R}^N),$ which implies that $(u,v)$ is a strong solution for Syst.~\eqref{S}. If $N-2m \leq 0,$ then Sobolev embedding implies that $u \in L^\theta (\mathbb{R}^N)$ for all $\theta \in (m, \infty).$ Successively applications of \cite[Proposition 1.1]{veryweak} leads to $v \in W_{\loca}^{2, \theta}(\mathbb{R}^N),$ for all $\theta \in (1,\infty),$ and therefore $u,v \in C^1(\mathbb{R}^N).$ In this case if one assume that $f(x,t),$ $b(x)g(t)\in C^{0,\alpha }_{\loca } (\mathbb{R}^N\times \mathbb{R}),$ $V(x) \in C^{0,\alpha }_{\loca } (\mathbb{R}^N),$ then Schauder regularity theory can be applied to conclude that $u,v \in C^{2,\alpha}_{\loca} (\mathbb{R}^N).$ 
		
		If $N-2m > 0,$ then $f(u) \in L_{\loca }^{m_\ast / p_\varepsilon}(\mathbb{R}^N),$ and so $\hat{f}(x) \in L ^{m_1} _{\loca} (\mathbb{R}^N),$ where $m_1  = \min\{m_\ast / p_\varepsilon ,q \}.$ Therefore \cite[Proposition 1.1]{veryweak} implies that $v \in W^{2, m_1}_{\loca} (\mathbb{R}^N).$ In particular we get that $(u,v)$ is a strong solution for \eqref{S}. Now note that $f(u) \in L ^{p_\varepsilon / (p_\varepsilon -1 )} (\Omega ),$ which allows us to apply the standard $L^p$ regularity theory to get that $v \in  W^{2,p_\varepsilon / (p_\varepsilon -1 )}_{\loca } (\Omega ).$ Denoting $l^{(\varepsilon )} = p_\varepsilon / (p_\varepsilon -1 ),$ we see that $g(v) \in L^{l _\ast / (q-1)}_{\loca}(\Omega),$ and so $u \in W^{2,\sigma_1 } _{\loca} (\Omega ),$ where $\sigma_1 = l_\ast/(q-1).$ If $N-2\sigma_1 \leq0,$ we argue as above and conclude the result. If not, by Sobolev embedding, we see that $u \in L^{(\sigma_1)_\ast}_{\loca }(\Omega)$ and $f(u) \in L_{\loca}^{(\sigma_1)_\ast /(p_\varepsilon-1)}(\Omega).$ Similarly $v \in W^{2,(\sigma_1)_\ast /(p_\varepsilon-1)}_{\loca } (\Omega ),$ $g(v) \in L^{\sigma_2}_{\loca}(\Omega)$ and $u \in W^{2,\sigma_2 } _{\loca} (\Omega ),$ where $\sigma_2 = (\sigma_1)_\ast /  [(p_\varepsilon - 1)(q-1)].$ Once again, if $N-2\sigma_2 \leq 0,$ then the proof is complete. Hence we suppose that $N-2\sigma_2 > 0.$ Proceeding with this argument we find that $\sigma_k = (\sigma_{k-1})_\ast  /  [(p_\varepsilon - 1)(q-1)],$ $k \geq 2,$ where $N-2\sigma _{k-1} >0.$ Using the definition of $(\sigma _k)_\ast$ we see that
		\begin{align}
			\label{bootstrap} \frac{1}{ (\sigma_k )_\ast } &= \frac{1}{(\sigma_1)_\ast}[(p_\varepsilon - 1)(q - 1)]^{k-1} - \frac{2}{N} \sum _{j=0}^{k-2} [(p-1)(q-1)]^j
			\\ &= \left[\frac{1}{ (\sigma_1)_\ast } - \frac{2}{N}\frac{1}{(p_\varepsilon-1)(q-1) -1} \right] [(p_\varepsilon - 1)(q - 1)]^{k-1}\nonumber
			\\ & \qquad  +\frac{2}{N}\frac{1}{(p_\varepsilon - 1)(q-1) - 1}.\nonumber
		\end{align}
		The numerator of the coefficient for the first term in the right-hand side of \eqref{bootstrap} equals to
		\begin{equation*}
			\left[ (p-1)(q-1) -1 \right] \left[N (p-1)(q-1) - 2pq\right]  -2p,
		\end{equation*}
		and it is negative. Therefore, for $k$ large enough we have that $1/(\sigma_k )_\ast <0$ and consequently $W^{2,\sigma _k}_{\loca }(\Omega) \hookrightarrow L^\theta_{\loca} (\mathbb{R}^N),$  for all $\theta \in [1,\infty ).$ For the case that $\eqref{subDACERTO}$ holds, we see that $f(u)\in L^{p/(p-1)}(\mathbb{R}^N)$ and since $V(x)\in L ^\infty (\mathbb{R}^N)$ we can apply the $L^p$ elliptic regularity theory to obtain that $v \in W^{2,l}(\mathbb{R}^N) = W^{2,l}_V(\mathbb{R}^N).$
	\end{proof}
	\subsection{Pohozaev type identity} 
	In this section, we prove Theorem \ref{p_pohozaev_id}, and we study its consequences.
	\begin{proof}[Proof of Theorem \ref{p_pohozaev_id}]
		Consider $\xi \in C_0 ^\infty (\mathbb{R}:[0,1])$ such that $\xi(t)= 1,$ if $|t| \leq 1,$ $\xi(t)= 0,$ if $|t| \geq 2,$ and $|\xi ' (t)| \leq C,$ $\forall \, t \in \mathbb{R}$ for $C>0.$ Let $\mathcal{O}=\{x^{(1)},\ldots,x^{(l)} \}.$ For each $n=1,\ldots,$ define $\xi _n : \mathbb{R}^{N} \rightarrow \mathbb{R}$ by
		\begin{align*}
			\xi_n (x)=
			\left\{
			\begin{aligned}
				&\xi(|x|^2/n^2), \quad&\text{if }|x-x^{(i)}|^2 > 2/n^2,&\ \text{ for all }i=1,\ldots,l,\\
				&1-\xi (n^2 |x-x^{(i)}|^2),\quad&\text{if }  |x-x^{(i)}|^2 \leq 2/n^2,&\ \text{ for some }i=1,\ldots,l.
			\end{aligned}
			\right.
		\end{align*}
		Then, for $n$ large enough, $\xi _n$ is well defined ($\xi_n \equiv 1$ in $B_{n^2}$), $\xi _n \in C_0 ^\infty (\mathbb{R}^N)$ and verifies $|x||\nabla \xi _n (x)| \leq C,$ $\forall \, x\in \mathbb{R}^{N},$ for some $C>0.$ Some computations leads to the following identities
		\begin{equation}\label{poho1}
			\begin{aligned}
				\xi _n \left\langle x, \nabla v  \right\rangle \Delta u &= \dive \left(\xi _n \left\langle x, \nabla u  \right\rangle \nabla v- \xi _n  \left\langle \nabla u, \nabla v \right\rangle x \right) \\
				& \quad - \left\langle x, \nabla v\right\rangle \left\langle\nabla \xi _n , \nabla u \right\rangle + \left\langle x, \nabla \xi _n \right\rangle \left\langle\nabla u, \nabla v \right\rangle \\
				& \quad + \xi _n N  \left\langle \nabla u , \nabla v \right\rangle - \xi _n \left\langle \nabla u, \nabla v \right\rangle + \xi _n \sum_{j=1}^N \sum _{i=1}^N x_j u_{x_i x_j} v_{x_i},
			\end{aligned}
		\end{equation}
		\begin{equation*}
			\begin{aligned}
				\xi _n \left\langle x, \nabla u  \right\rangle \Delta v &= \dive \left(\xi _n \left\langle x, \nabla v  \right\rangle \nabla u- \xi _n  \left\langle \nabla v, \nabla u \right\rangle x \right)  \\
				&\quad - \left\langle x, \nabla u\right\rangle \left\langle\nabla \xi _n , \nabla v \right\rangle + \left\langle x, \nabla \xi _n \right\rangle \left\langle\nabla v, \nabla u \right\rangle \\
				& \quad + \xi _n N  \left\langle \nabla v , \nabla u \right\rangle - \xi _n \left\langle \nabla v, \nabla u \right\rangle + \xi _n \sum_{j=1}^N \sum _{i=1}^N x_j v_{x_i x_j} u_{x_i},
			\end{aligned}
		\end{equation*}
		Moreover, denoting $\Psi= F(x,u) + b(x)G(v) - V(x)uv,$ we have
		\begin{multline}
			(f(x,u) - V(x)v) \left\langle x, \nabla u \right\rangle \xi _n + (b(x)g(v) - V(x)u)\left\langle x, \nabla v\right\rangle \xi _n \\ = \dive (\xi \Psi x) - \Psi \left\langle x, \nabla \xi _n \right\rangle -\xi _n N  \Psi  - \xi _n \sum _{i=1}^N x_i F_{x_i}(x,u) \\ - \xi _n G(v) \left\langle x, \nabla b(x) \right\rangle + \xi _n \left\langle x, \nabla V(x) \right\rangle uv.\label{poho3}
		\end{multline}
		We also have that
		\begin{equation*}
			\sum_{j=1}^N \sum _{i=1}^N \int_{\mathbb{R}^N}\xi n x_j  \left(v{x_i x_j} u_{x_i}+v_{x_i x_j} u_{x_i}\right) \dx=-\int_{\mathbb{R}^N}(\left\langle \nabla \xi _n ,  x\right\rangle +N \xi _n) \left\langle \nabla u, \nabla v\right\rangle  \dx.
		\end{equation*}
		Multiplying  the first equation of \eqref{S} by $\xi _n \left\langle x, \nabla v  \right\rangle$ and the second one by $\xi _n \left\langle x, \nabla u  \right\rangle,$ suming  the resulting identities, integrating over $\mathbb{R}^N,$ and applying  \eqref{poho1}--\eqref{poho3}, we obtain 
		\begin{equation*}
			(2-N) \int_{\mathbb{R}^N}\left\langle\nabla u , \nabla v \right\rangle \dx = \int_{\mathbb{R}^N} \left\langle x, \nabla V(x) \right\rangle uv -(N\Psi + \sum _{i=1}^N x_i F{x_i}(x,u) + G(v) \left\langle x, \nabla b(x) \right\rangle)\dx,
		\end{equation*}
		where we have used Lebesgue's dominated convergence theorem to pass the limit as $n \rightarrow \infty.$ Using condition \eqref{condicao_poho}, identity \eqref{SIS_poho} follows.
		
		To prove \eqref{EQ_poho} it suffices to use the fact that $G(h (t)) = t h(t) - H(t),$ $\forall \, t\in \mathbb{R},$ and
		\begin{equation*}
			\int_{\mathbb{R}^N} f(x,u)u \dx = \int_{\mathbb{R}^N}h \left((b(x))^{-1} L(u)\right) L(u) \dx  = \int_{\mathbb{R}^N}v b(x) g(v)  \dx\qedhere
		\end{equation*}
	\end{proof}
	\begin{corollary}\label{cor_pohozaev} 
		Assume \eqref{constant_ss}, \eqref{g_dois}, $b(x)\equiv 1$ and that $V(x) \equiv  -\lambda |x|^{-2}$ with $0 \leq \lambda < \Lambda _{N,m}^{1/m}$.  Let $u\in \mathcal{D}^{2,m} (\mathbb{R}^N)$ be a weak solution of \eqref{P}. Then
		\begin{equation*}
			\int_{\mathbb{R}^N}H(L_\lambda (u))- \frac{2}{N}h(L_\lambda (u))L_\lambda (u)\dx = \int _{\mathbb{R}^N}F(u) \dx.
		\end{equation*}
		In particular,
		\begin{equation*}
			\int_{\mathbb{R}^N}|L_\lambda (u)|^m\dx = \frac{mN}{N-2m}\int _{\mathbb{R}^N}F(u) \dx = p \int _{\mathbb{R}^N}F(u) \dx.
		\end{equation*}
	\end{corollary}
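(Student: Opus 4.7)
The plan is to specialize the Pohozaev-type identity \eqref{EQ_poho} of Theorem \ref{p_pohozaev_id} to the present setting. Taking $b(x) \equiv 1$ and $V(x) = V_\lambda(x) = -\lambda|x|^{-2}$ (which belongs to $C^1(\mathbb{R}^N\setminus\{0\})$, so we apply Theorem \ref{p_pohozaev_id} with the exceptional set $\mathcal{O} = \{0\}$), both terms containing $\nabla b$ in \eqref{EQ_poho} vanish, and since $f$ depends only on $u$ the term $\sum_i x_i F_{x_i}(x,u)$ is also zero. A direct computation yields $\langle x, \nabla V_\lambda(x)\rangle = 2\lambda|x|^{-2} = -2 V_\lambda(x)$, so that
\begin{equation*}
2\Delta u + \langle x, \nabla V_\lambda(x)\rangle u - (b(x))^{-1}\langle x, \nabla b(x)\rangle L(u) = 2\Delta u - 2 V_\lambda(x) u = -2 L_\lambda(u).
\end{equation*}
Substituting these simplifications into \eqref{EQ_poho} and dividing by $N$ produces the first asserted identity.

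Before invoking \eqref{EQ_poho}, the integrability conditions \eqref{condicao_poho} and the compatibility identity \eqref{condicao_poho_weak} must be verified. By Theorem \ref{p_regcrit}, $v := h(L_\lambda u)$ lies in $\mathcal{D}^{2,l}_{\rad}(\mathbb{R}^N)$ and $(u,v)$ is a strong radial solution of Syst.~\eqref{S}, which yields \eqref{condicao_poho_weak}. The critical Sobolev embedding $\mathcal{D}^{2,m}(\mathbb{R}^N) \hookrightarrow L^p(\mathbb{R}^N)$ paired with \eqref{constant_ss} gives $F(u), uf(u) \in L^1(\mathbb{R}^N)$. Using \eqref{g_dois} together with $m = q/(q-1)$, the pointwise estimate $|v|^q \leq C|L_\lambda u|^m$ places $v$ in $L^q(\mathbb{R}^N)$, so that $H(L_\lambda u)$ and $v g(v)$ are integrable as well. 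For the two potential terms, each a constant multiple of $|x|^{-2}uv$, the Hardy inequality underlying Proposition \ref{p_norma_comparar} supplies $\||x|^{-2}u\|_m \leq C\|\Delta u\|_m < \infty$, and Hölder's inequality with the conjugate pair $(m,q)$ then yields $|x|^{-2}uv \in L^1(\mathbb{R}^N)$.

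For the ``in particular'' assertion, we are in the pure-power case $g(t) = |t|^{q-2}t$, so that $h(t) = |t|^{m-2}t$, $H(t) = |t|^m/m$, and $h(t)\,t = |t|^m$. The first identity then collapses to
\begin{equation*}
\left(\frac{1}{m} - \frac{2}{N}\right) \int_{\mathbb{R}^N} |L_\lambda(u)|^m \dx = \int_{\mathbb{R}^N} F(u)\dx,
\end{equation*}
and since $\tfrac{1}{m} - \tfrac{2}{N} = \tfrac{N-2m}{mN} = \tfrac{1}{p}$, multiplying through by $p$ delivers the claimed equality.

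The principal technical obstacle is the verification of integrability for the singular terms $|x|^{-2}uv$ and $V_\lambda(x)uv$, which rests on the Hardy inequality with sharp constant $\Lambda_{N,m}$ and the norm equivalence in Proposition \ref{p_norma_comparar}; the remaining steps are algebraic simplifications of \eqref{EQ_poho}.
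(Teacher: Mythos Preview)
Your approach is exactly the intended one: the corollary is presented in the paper without proof because it is meant to follow immediately by specializing identity \eqref{EQ_poho} of Theorem~\ref{p_pohozaev_id}, and your algebraic reduction (the computation $\langle x,\nabla V_\lambda\rangle = -2V_\lambda$ leading to $2\Delta u + \langle x,\nabla V_\lambda\rangle u = -2L_\lambda(u)$, then dividing by $N$) is correct. Your reading of the ``in particular'' clause as referring to the pure-power case $h(t)=|t|^{m-2}t$ is also correct; under \eqref{g_dois} alone the second displayed identity need not hold.

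There is one gap worth flagging. You invoke Theorem~\ref{p_regcrit} to place $v=h(L_\lambda u)$ in $\mathcal{D}^{2,l}_{\rad}(\mathbb{R}^N)$ and to obtain \eqref{condicao_poho_weak}, but that theorem is stated only for \emph{radial} $u$, whereas the corollary is stated for general $u\in\mathcal{D}^{2,m}(\mathbb{R}^N)$ (and is indeed applied in the paper to non-radial competitors $\psi$ when proving the ground-state property in Theorem~\ref{teo_mincrit2}). To close this without radiality, argue directly: $v\in L^q(\mathbb{R}^N)$ is a very weak solution of $-\Delta v = f(u)+\lambda|x|^{-2}v$, and on any bounded $\Omega$ with $0\notin\overline{\Omega}$ the right-hand side lies in $L^l(\Omega)$ (since $f(u)\in L^l(\mathbb{R}^N)$ by \eqref{constant_ss} and $|x|^{-2}v\in L^q(\Omega)\subset L^l(\Omega)$), so \cite{veryweak} gives $v\in W^{2,l}_{\loca}(\mathbb{R}^N\setminus\{0\})$. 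This suffices because the cutoffs $\xi_n$ in the proof of Theorem~\ref{p_pohozaev_id} vanish near the exceptional set $\mathcal{O}=\{0\}$, so only regularity away from the origin is actually used. The verification of \eqref{condicao_poho_weak} then follows from the weak formulations once $v\in\mathcal{D}^{2,l}$ is known away from the origin, combined with the Hardy-based integrability you already checked.
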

	As a consequence of Theorem \ref{p_pohozaev_id}, we now have a general nonexistence result.
	\begin{proposition}\label{p_nonexist} 
		Suppose that $V(x)\in C^1(\mathbb{R}^N \setminus \mathcal{O}),$ 
		where $\mathcal{O}$ is a finite set, $0<\beta \leq b(x)\in L^\infty (\mathbb{R}^N)\cap C^1 (\mathbb{R}^N)$ 
		and that $(u,v) \in W_{\loca}^{2, q/(q-1)}(\mathbb{R}^N)\times W_{\loca}^{2, p/(p-1)}(\mathbb{R}^N )$ is a strong solution of \eqref{S} satisfying \eqref{condicao_poho_weak} and \eqref{condicao_poho}. 
		Furthermore suppose that one of the following assumptions is satisfied:
		
		\noindent$\mathrm{(i)}$ $f(x,t) \equiv b(x)f_0(t),$ $pF_0(t) \geq f_0(t)t\geq C|t|^p,$ $q G(t) \geq g(t)t\geq C|t|^q,$ $0<p,q < 2^\ast,$ $V(x)\equiv 0,$ and
		\begin{equation*}
			\frac{1}{\theta} \left(N b(x) + \left\langle x, \nabla b(x)\right\rangle \right) -  \frac{N-2}{2}b(x)>0,\, \text{ in }\mathbb{R}^N,\text{ for }\theta = p,q.
		\end{equation*}
		Next we assume that $b(x)\equiv 1,$ \eqref{g_dois} and suppose that $v= h \left(L (u) \right) \in W_{\loca}^{2, p/(p-1)}(\mathbb{R}^N).$
		
		\noindent$\mathrm{(ii)}$ $f(x,t) \equiv f(t),$ $f(t)t\geq p F(t),$ $2 V(x) + \left\langle x, \nabla V(x)\right\rangle \geq 0,$ with strict inequality in a nonzero measure domain, $u,v\geq 0$ almost everywhere in $\mathbb{R}^N$ and 
		\begin{equation}\label{assump_nonexist2}
			\frac{\mathcal{C}_2}{\mathcal{C}_1} \leq \left(\frac{p}{2p+N} \right) \frac{N}{m}.
		\end{equation}
		$\mathrm{(iii)}$ $f(x,t) \equiv f(t),$ $pF(t) \geq f(t)t,$ $V(x)\equiv 0$ and
		\begin{equation}\label{assump_nonexist}
			\frac{ \mathcal{C}_2}{\mathcal{C}_1} < \left(\frac{2p+N}{p} \right) \frac{m}{N}.
		\end{equation}
		Then $u=v=0.$ 
		
		\noindent$\mathrm{(iv)}$ Alternatively, if $f(x,t) \equiv b(x)f_0(t),$ $0\leq pF_0(t) \leq f_0(t)t,$ $0\leq q G(t) \leq g(t)t,$ $p,q \geq 2^\ast,$ $2 V(x) + \left\langle x, \nabla V(x)\right\rangle > 0$ in $\mathbb{R}^N,$
		\begin{equation*}
			\int_{\mathbb{R}^N}(2 V(x) + \left\langle x, \nabla V(x) \right\rangle )uv \dx \geq 0,
		\end{equation*}
		and
		\begin{equation*}
			0 \leq \frac{1}{\theta} \left(N b(x) + \left\langle x, \nabla b(x)\right\rangle \right) \leq \frac{N-2}{2}b(x),\, \text{ in }\mathbb{R}^N, \text{ for }\theta = p, q,
		\end{equation*}
		then $uv=0$ almost everywhere in $\mathbb{R}^N.$ Additionally, if $f_0(t)$ is a homeomorphism and $u,v\geq 0$ almost everywhere in $\mathbb{R}^N,$ then $u=v=0.$
	\end{proposition}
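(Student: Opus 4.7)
The plan is to apply the Pohozaev-type identities of Theorem \ref{p_pohozaev_id} case by case, combining them with the testing relation $\int uf(x,u)\dx=\int vb(x)g(v)\dx$ obtained by pairing the two equations of \eqref{S} with $u$ and $v$ respectively (legitimate under the regularity $(u,v)\in W^{2,q/(q-1)}_{\loca}\times W^{2,p/(p-1)}_{\loca}$ and the integrability \eqref{condicao_poho}). In every case, the structural hypotheses on $f,g,V,b$ will convert the resulting equality into an inequality of the shape ``nonpositive $\geq$ nonnegative,'' so that every term vanishes.

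For cases (i) and (iv) I would plug $f(x,t)=b(x)f_0(t)$ (so that $\sum_i x_iF_{x_i}(x,u)=\langle x,\nabla b(x)\rangle F_0(u)$) into \eqref{SIS_poho}, and use the pointwise relations between $F_0,G$ and $uf_0,vg(v)$. The identity rearranges as
\[
\int\Big[\tfrac{1}{p}(Nb+\langle x,\nabla b\rangle)-\tfrac{N-2}{2}b\Big]uf_0(u)\dx+\int\Big[\tfrac{1}{q}(Nb+\langle x,\nabla b\rangle)-\tfrac{N-2}{2}b\Big]vg(v)\dx\lessgtr 0,
\]
with the inequality direction determined by the hypotheses of the case. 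In (i), both brackets are strictly positive and $uf_0(u)\geq C|u|^p\geq 0$, $vg(v)\geq C|v|^q\geq 0$, so each integrand vanishes and $u=v=0$. Case (iv) is the mirror image: the reversed pointwise bounds flip the inequality, the brackets are nonpositive, and combining with the nonnegativity of $\int(2V+\langle x,\nabla V\rangle)uv\dx$ forces $\int(2V+\langle x,\nabla V\rangle)uv\dx=0$, hence $uv=0$ a.e.\ thanks to the strict positivity of $2V+\langle x,\nabla V\rangle$; with $f_0$ a homeomorphism and $u,v\geq 0$, the original equations upgrade $uv=0$ to $u=v=0$ by reading them on $\{u=0\}$ and $\{v=0\}$ separately.

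For cases (ii) and (iii) I would instead use the reduced identity \eqref{EQ_poho} with $b\equiv 1$ and $f=f(t)$. Writing $v=h(L(u))$ and $2\Delta u=2Vu-2L(u)$, it simplifies to
\[
N\int H(L(u))\dx+\int(2V+\langle x,\nabla V\rangle)uv\dx=N\int F(u)\dx+2\int vg(v)\dx.
\]
Integrating \eqref{g_dois} yields the sandwich $\tfrac{\mathcal{C}_1}{m\mathcal{C}_2}vg(v)\leq H(L(u))\leq\tfrac{\mathcal{C}_2}{m\mathcal{C}_1}vg(v)$; combining it with the testing identity and the one-sided comparison between $F$ and $uf/p$ reduces the above to the scalar inequality
\[
\Big(\tfrac{N\mathcal{C}_1}{m\mathcal{C}_2}-\tfrac{N+2p}{p}\Big)\int vg(v)\dx+\int(2V+\langle x,\nabla V\rangle)uv\dx\leq 0
\]
in case (ii), and its reverse in case (iii) (where $V\equiv 0$). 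The assumptions \eqref{assump_nonexist2} and \eqref{assump_nonexist} are precisely what is needed for the coefficient of $\int vg(v)\dx$ to have the correct sign; together with the sign hypotheses on $u,v,V$ this forces $\int vg(v)\dx=0$, hence $v=0$ a.e. Finally $L(u)=g(0)=0$, and a Liouville argument in the global Sobolev class (case (iii), $V\equiv 0$) or the sign structure of $V$ together with $u,v\geq 0$ (case (ii)) yields $u=0$.

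The main obstacle will be the constant-matching in cases (ii) and (iii): one must weave together the two-sided bounds of \eqref{g_dois}, the testing identity, and the one-sided comparison on $F$ in such a way that the resulting scalar inequality is sign-definite. This is the algebraic heart of the critical-hyperbola/Sobolev-exponent balance and is what pins down the precise thresholds \eqref{assump_nonexist2} and \eqref{assump_nonexist}. A secondary technical point is justifying the final step from $v=0$ (or $uv=0$) to $u=v=0$, which relies on a Liouville-type principle for the remaining homogeneous equation $-\Delta u+Vu=0$ or, when $V\not\equiv 0$, on a maximum-principle argument adapted to the sign hypotheses on $V$ and on the regularity of strong solutions established elsewhere in the paper.
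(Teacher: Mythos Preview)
Your proposal is correct and follows the same approach as the paper: applying the Pohozaev identities \eqref{SIS_poho} (for (i) and (iv)) and \eqref{EQ_poho} (for (ii) and (iii)), combined with the testing relation $\int uf(x,u)\dx=\int vb(x)g(v)\dx$ and the two-sided bounds from \eqref{g_dois}, to produce sign-definite scalar inequalities. Your preliminary rewriting of \eqref{EQ_poho} as $N\int H(L(u))\dx+\int(2V+\langle x,\nabla V\rangle)uv\dx=N\int F(u)\dx+2\int vg(v)\dx$ is a minor (and arguably cleaner) reorganization of the paper's argument, which instead keeps the term $2\Delta u$ and bounds $2\Delta u+\langle x,\nabla V\rangle u-\tfrac{N}{p}L(u)$ pointwise before invoking $h(t)t\leq \tfrac{m\mathcal{C}_2}{\mathcal{C}_1}H(t)$; the resulting thresholds \eqref{assump_nonexist2}, \eqref{assump_nonexist} and the endgame (Liouville/maximum principle to pass from $v=0$ or $uv=0$ to $u=v=0$) are identical.
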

	
	\begin{proof}
		(i) We apply \eqref{SIS_poho}, $pF_0(t) \geq f_0(t)t$ and $q G(t) \geq g(t)t,$ to get
		\begin{multline*}
			\int _{\mathbb{R}^N } \left[ \left(\frac{N}{p} - \frac{N-2}{2} \right)b(x) + \frac{1}{p} \left\langle x, \nabla b(x)\right\rangle  \right]u f_0(u)  \dx \\ +  \int _{\mathbb{R}^N }\left[ \left(\frac{N}{q} - \frac{N-2}{2} \right)b(x) + \frac{1}{q} \left\langle x, \nabla b(x)\right\rangle  \right] v g(v)\dx \leq 0,
		\end{multline*}
		and so $u=v=0.$
		
		(ii) We argue by contradiction and assume that $u \neq 0.$ Using \eqref{EQ_poho}, we get
		\begin{equation*}
			N \int _{\mathbb{R}^N}H(L(u)) \dx + \int _{\mathbb{R}^N}h(L(u))\left[2 \Delta u + \left\langle x, \nabla V(x) \right\rangle u  -\frac{N}{p}L(u)\right] \dx \leq 0.
		\end{equation*}
		Now since $u,v \geq 0,$ we see that
		\begin{equation*}
			2 \Delta u + \left\langle x, \nabla V(x) \right\rangle u  -\frac{N}{p}L(u) \geq -\left(\frac{2p+N}{p}\right)L(u)\text{ a.e. in }\mathbb{R}^N,
		\end{equation*}
		which implies 
		\begin{equation*}
			\left[N - \left(2+\frac{N}{p} \right) \frac{\mathcal{C}_2}{\mathcal{C}_1}m   \right] \int _{\mathbb{R}^N}H(L(u)) \dx < 0.
		\end{equation*}
		This contradicts  \eqref{assump_nonexist2}.
		
		(iii) Applying \eqref{EQ_poho}, we have
		\begin{equation*}
			N \int _{\mathbb{R}^N} H(- \Delta u) \dx + \int _{\mathbb{R}^N}h(- \Delta u)(2\Delta u) \dx \geq \frac{N}{p}\int _{\mathbb{R}^N} f(u)u  \dx = \frac{N}{p} \int _{\mathbb{R}^N} h(- \Delta u) (- \Delta u) \dx.
		\end{equation*}
		Thus
		\begin{equation*}
			\left[\left( 2+\frac{N}{p} \right)\frac{\mathcal{C}_1}{\mathcal{C}_2}m - N \right] \int _{\mathbb{R}^N} H(- \Delta u) \dx \leq 0.
		\end{equation*}
		By \eqref{assump_nonexist}, we conclude $u=v=0.$
		
		(iv) Applying \eqref{SIS_poho}, and using $pF_0(t) \leq f_0(t)t,$ $q G(t) \leq g(t)t,$ we see that
		\begin{multline*}
			\int_{\mathbb{R}^N}(2V(x) + 2 \left\langle x, \nabla V(x) \right\rangle ) u v \dx \\ \leq  \int _{\mathbb{R}^N } \left[ \left(\frac{N}{p} - \frac{N-2}{2} \right)b(x) + \frac{1}{p} \left\langle x, \nabla b(x)\right\rangle  \right]u f_0(u)  \dx \\ +  \int _{\mathbb{R}^N }\left[ \left(\frac{N}{q} - \frac{N-2}{2} \right)b(x) + \frac{1}{q} \left\langle x, \nabla b(x)\right\rangle  \right] v g(v)\dx,
		\end{multline*}
		which implies $uv=0$ almost everywhere in $\mathbb{R}^N.$ Considering the additional hypothesis, let us define $A=\{x \in \mathbb{R}^N:u(x) = 0  \}$ and $B=\{ x \in \mathbb{R}^N: v(x) = 0\}.$ Clearly, $A\cup B = \mathbb{R}^N$ and by the fact that $g(t)$ is a homeomorphism, if $A \neq \emptyset,$ then $B \subset A = \mathbb{R}^N.$ Hence $L(v) = 0$ in $\mathbb{R}^N,$ which by the strong maximum principle implies $B= \mathbb{R}^N.$ When $B\neq \emptyset,$ since $f_0(t)$ is a homeomorphism, the same argument can be used to conclude $A=B=\mathbb{R}^N.$
	\end{proof}
	\begin{remark}
		Suppose that $\mathcal{C}_1 = \mathcal{C}_2.$ Then \eqref{assump_nonexist2} and \eqref{assump_nonexist} are equivalent to ask $1/p+1/q \leq (N-2)/N$ and $1/p+1/q \geq (N-2)/N$, respectively. Thus Corollary \ref{c_leconject} follows by Proposition \ref{p_nonexist}.
	\end{remark}
	
	\section{Concentration-compactness principle}\label{s_cc}
	In the following results, we have a concentration-compactness principle utilizing profile decomposition for weak convergence in $\mathcal{D}^{2,m}(\mathbb{R}^N)$ and $W^{2,m}(\mathbb{R}^N).$ 
	This kind of results were extensively studied in the past years (see for instance \cite{tintageral,tintabanach,paper1,paper2,wavelet,tintabook}).
	\begin{theorem}\label{teo_profcrit}
		Let $(u_k) \subset \mathcal{D}^{2,m}(\mathbb{R}^N)$ be a bounded sequence and  $N>2m.$ Then there exists $\mathbb{N}_{\ast } \subset \mathbb{N},$ disjoint sets $\mathbb{N} _{0}, \mathbb{N} _{- }, \mathbb{N}_ {+} \subset \mathbb{N}, $ with $\mathbb{N}_{\ast } = \mathbb{N} _{0} \cup  \mathbb{N}_ {+} \cup \mathbb{N} _{-} $ and sequences $(w ^{(n)}) _{n \in \mathbb{N}_{\ast } } \subset \mathcal{D}^{2,m}(\mathbb{R}^N),$ $(y_k ^{(n)}) _ {k \in \mathbb{N}} \subset \mathbb{Z}^N,$ $(j _k ^{(n)}) _{k \in \mathbb{N}}  \subset \mathbb{Z}$ for $n \in \mathbb{N}_{\ast }$ 
		such that, up to a subsequence,
		\begin{align}
			&2 ^{-\frac{N-2m}{m}j_k ^{(n)}}u_k (2 ^{- j_k ^{(n)}} \cdot + y_k^{(n)} )\rightharpoonup w^{(n)},&  k \rightarrow \infty,&& &\mbox{ in } \mathcal{D}^{2,m}(\mathbb{R}^N),\label{seis.um}\\ 
			&|j_k ^{(n)} - j_k ^{(i)}|+|2 ^ {j_k ^{(n)}}( y^{(n)}_k - y^{(i)}_k )| \rightarrow \infty ,&   k \rightarrow \infty,&& &\text{ for } i \neq n,\label{seis.dois} \\ 
			&\sum _{n \in \mathbb{N}_{\ast}}\| w^{(n)} \|_p^p  \leq \limsup_k \|u_k \|_p ^p,&\label{seis.tres} \\ 
			& u_k - \sum _{n \in \mathbb{N}_{\ast }} 2 ^{\frac{N-2m}{m} j_k ^{(n)} } w^{(n)}(2^{j _k ^{(n)}} ( \cdot - y_k^{(n)} ) ) \rightarrow  0, &k \rightarrow \infty,&& &\text{ in } L^{m_\ast }(\mathbb{R}^{N}),\label{seis.quatro}
		\end{align}
		and the series in \eqref{seis.quatro} converges uniformly in $k.$ Furthermore, $1 \in \mathbb{N} _0,$ $y_k ^{(1)} = 0;$ $j _k ^{(n)} = 0 $ whenever $n \in \mathbb{N}_0;$ $j_k ^{(n)} \rightarrow -\infty$ whenever $n \in \mathbb{N}_{- };$ and $j_k ^{(n)} \rightarrow +\infty$ whenever $n \in \mathbb{N}_{ +}.$
	\end{theorem}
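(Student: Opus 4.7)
The plan is to apply the abstract Banach space profile decomposition theory of Tintarev-Fieseler \cite{tintabanach,tintabook} adapted to the second-order homogeneous Sobolev space $\mathcal{D}^{2,m}(\mathbb{R}^N).$ First I would identify the noncompact group of isometries $G$ generated by integer translations $\tau_y : u \mapsto u(\cdot - y)$ for $y \in \mathbb{Z}^N$ and dyadic dilations $\delta_j : u \mapsto 2^{(N-2m)j/m} u(2^j \cdot)$ for $j \in \mathbb{Z}.$ A change of variables shows each $g = \tau_y \circ \delta_j$ preserves both the norm $\|\Delta u\|_m$ on $\mathcal{D}^{2,m}(\mathbb{R}^N)$ and the $L^{m_\ast}$ norm; this scaling is precisely the one that makes $G$ act by isometries on both spaces, which is exactly why the Sobolev embedding $\mathcal{D}^{2,m}(\mathbb{R}^N) \hookrightarrow L^{m_\ast}(\mathbb{R}^N)$ may lose compactness along $G$-orbits.

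Next I would extract profiles iteratively. Set $w^{(1)}$ to be the weak limit of $u_k$ along a subsequence, with $j_k^{(1)}=0$ and $y_k^{(1)}=0,$ so $1 \in \mathbb{N}_0.$ Given profiles $w^{(1)}, \dots, w^{(n-1)}$ and indices $(j_k^{(i)}, y_k^{(i)}),$ form the remainder $r_k^{(n-1)} = u_k - \sum_{i<n} g_k^{(i)} w^{(i)},$ where $g_k^{(i)} = \tau_{y_k^{(i)}} \circ \delta_{j_k^{(i)}}.$ If $r_k^{(n-1)} \to 0$ in $L^{m_\ast},$ the construction terminates; otherwise choose $g_k^{(n)} = \tau_{y_k^{(n)}} \circ \delta_{j_k^{(n)}} \in G$ so that $(g_k^{(n)})^{-1} r_k^{(n-1)}$ has nonzero weak limit $w^{(n)}$ (taken to approach the supremum of norms of all such achievable weak limits), and assign $n$ to $\mathbb{N}_0,$ $\mathbb{N}_-,$ or $\mathbb{N}_+$ according to whether $j_k^{(n)}$ is bounded or tends to $\pm \infty$ (after passing to a subsequence). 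The orthogonality \eqref{seis.dois} then follows by contradiction: if both $j_k^{(n)} - j_k^{(i)}$ and $2^{j_k^{(n)}}(y_k^{(n)} - y_k^{(i)})$ stay bounded along a subsequence for some $i < n,$ then $(g_k^{(i)})^{-1} g_k^{(n)}$ converges in the strong operator topology to a fixed $g \in G,$ forcing $w^{(n)}$ to coincide with $g$ applied to a profile already subtracted off, contradicting the maximal choice of $g_k^{(n)}.$

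The decisive and hardest ingredient for \eqref{seis.quatro} is cocompactness of $\mathcal{D}^{2,m}(\mathbb{R}^N) \hookrightarrow L^{m_\ast}(\mathbb{R}^N)$ with respect to $G$: namely, $v_k \to 0$ in $L^{m_\ast}$ whenever $(g_k^{-1} v_k)$ has zero weak limit in $\mathcal{D}^{2,m}$ for every $(g_k) \subset G.$ I would establish this via a Jaffard-type wavelet decomposition \cite{wavelet} in $\mathcal{D}^{2,m}(\mathbb{R}^N),$ which encodes functions through coefficients indexed by the lattice $\mathbb{Z} \times \mathbb{Z}^N$ and yields an equivalent discrete Besov-type norm dominating $\|\cdot\|_{m_\ast};$ any nonvanishing sequence in $L^{m_\ast}$ then exhibits a wavelet coefficient of size bounded away from zero, producing a $g_k$-translate with a nonzero weak limit. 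Once cocompactness is available, the iteration exhausts the sequence in $L^{m_\ast}$; the energy inequality \eqref{seis.tres} is then obtained by a scale-by-scale Brezis-Lieb-type decomposition using \eqref{seis.dois} to decouple the rescaled profiles asymptotically, and uniform convergence of the series in \eqref{seis.quatro} follows from the summability of the $L^{m_\ast}$ norms of the extracted profiles controlled by the fixed $\mathcal{D}^{2,m}$ bound.
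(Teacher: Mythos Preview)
Your proposal is correct and follows essentially the same route as the paper: both invoke the abstract profile decomposition framework of Solimini--Tintarev \cite{tintabanach,tintabook,tintageral} for the group $D_{\mathbb{Z}^N}$ of integer translations and dyadic dilations acting isometrically on $\mathcal{D}^{2,m}(\mathbb{R}^N)$, establish cocompactness of the embedding into $L^{m_\ast}$ via the wavelet argument of \cite{wavelet} (or \cite[Theorem~7.3]{tintabanach}), and derive \eqref{seis.tres} through an iterated Br\'ezis--Lieb lemma exploiting the asymptotic orthogonality \eqref{seis.dois}. The paper merely packages the iteration by citing the abstract theorems and isolating the auxiliary lemmas (Lemma~\ref{l_convaux}, Lemma~\ref{l_brezislieb_it}, and the cocompactness proposition), whereas you spell out the extraction procedure explicitly, but the substance is the same.
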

	
	\begin{theorem}\label{teo_profsub}
		Let $(u_k) \subset W^{2,m}(\mathbb{R}^N)$ be a bounded sequence and $N \geq 2m.$ Then there exists $\mathbb{N}_{0 } \subset \mathbb{N},$ and sequences $(w ^{(n)}) _{n \in \mathbb{N}_{0 } } \subset W^{2,m}(\mathbb{R}^N),$ $(y_k ^{(n)}) _ {k \in \mathbb{N}} \subset \mathbb{Z}^N$ for $n \in \mathbb{N}_{0 }$ such that, up to a subsequence,
		\begin{align}
			&u_k (\cdot + y_k^{(n)} )\rightharpoonup w^{(n)},& \text{ as } k \rightarrow \infty,&& &\mbox{ in } W^{2,m}(\mathbb{R}^N),&\label{aplions}\\
			&|y^{(n)}_k - y^{(i)}_k| \rightarrow \infty , & \text{ as } k \rightarrow \infty,&& &\text{ for } i \neq n,\nonumber \\
			&\sum _{n \in \mathbb{N}_0}\| w^{(n)} \|_{m}^{m} \leq \limsup_k \|u_k \|_{m} ^{m},&\label{seis.tres.sub} \\
			&u_k - \sum _{n \in \mathbb{N}_0} w^{(n)}(\cdot - y_k^{(n)} ) \rightarrow  0,& \text{ as } k \rightarrow \infty,&& & \text{ in } L^{\theta}(\mathbb{R}^{N}),\label{seis.quatro.sub}
		\end{align}
		for any $\theta \in (m,m _\ast )$. Moreover, the series in \eqref{seis.quatro.sub} converges uniformly in $k.$
	\end{theorem}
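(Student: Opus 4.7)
The strategy is the standard bubble-extraction/Lions-vanishing iteration, adapted to the group of integer translations acting on $W^{2,m}(\mathbb{R}^N)$ (there are no dilations, so the argument is strictly simpler than the critical case Theorem \ref{teo_profcrit}). The key quantity controlling concentration is
\[
\sigma(v) = \sup_{y \in \mathbb{Z}^N} \int_{y+[0,1]^N} |v|^m \dx,
\]
and the whole scheme rests on two complementary facts: a bubble-extraction lemma that produces a nontrivial weak limit along some translation sequence whenever $\limsup_k \sigma(v_k) > 0$, and a Lions-type vanishing lemma asserting that $\limsup_k \sigma(v_k) = 0$ forces $v_k \to 0$ in $L^\theta(\mathbb{R}^N)$ for every $\theta \in (m,m_\ast)$.

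\textbf{Step 1 (bubble extraction).} Suppose $(v_k)$ is bounded in $W^{2,m}(\mathbb{R}^N)$ and $\limsup_k \sigma(v_k) = \tau > 0$. I choose $(y_k) \subset \mathbb{Z}^N$ with $\int_{y_k+[0,1]^N}|v_k|^m\dx \geq \tau/2$ along a subsequence, and pass to a subsequence along which $\tilde{v}_k := v_k(\cdot + y_k)$ converges weakly in $W^{2,m}(\mathbb{R}^N)$ to some $w$. Compactness of $W^{2,m}(Q) \hookrightarrow L^m(Q)$ on the unit cube $Q$ gives $\int_Q |w|^m \dx \geq \tau/2 > 0$, so $w \neq 0$.

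\textbf{Step 2 (Lions-type vanishing).} If $\limsup_k \sigma(v_k) = 0$ with $(v_k)$ bounded in $W^{2,m}(\mathbb{R}^N)$, I claim $v_k \to 0$ in $L^\theta(\mathbb{R}^N)$ for every $\theta \in (m, m_\ast)$. This reduces to the usual cube-by-cube interpolation argument: for each cube $Q_y := y + [0,1]^N$ with $y \in \mathbb{Z}^N$, Sobolev embedding and interpolation yield
\[
\|v_k\|_{L^\theta(Q_y)}^\theta \leq C \|v_k\|_{L^m(Q_y)}^{m(1-\alpha)\theta/m}\, \|v_k\|_{W^{2,m}(Q_y)}^{\alpha\theta},
\]
for a suitable $\alpha \in (0,1)$. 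Choosing $\alpha$ so that $\alpha \theta = m$, summation over $y \in \mathbb{Z}^N$ yields $\|v_k\|_\theta^\theta \leq C \sigma(v_k)^{(\theta-m)/m}\|v_k\|_{W^{2,m}}^m \to 0$.

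\textbf{Step 3 (iteration).} I set $v_k^{(1)} := u_k$, $y_k^{(1)} := 0$, and define $w^{(1)}$ as the weak limit (up to subsequence). Inductively, given disjoint profiles $w^{(1)},\dots,w^{(n)}$ and translations $y_k^{(1)},\dots,y_k^{(n)}$, I put
\[
r_k^{(n)} := u_k - \sum_{i=1}^n w^{(i)}(\cdot - y_k^{(i)}).
\]
If $\limsup_k \sigma(r_k^{(n)}) = 0$, I stop and set $\mathbb{N}_0 = \{1,\dots,n\}$. Otherwise, Step 1 yields $y_k^{(n+1)} \in \mathbb{Z}^N$ and a nontrivial weak limit $w^{(n+1)}$ of $r_k^{(n)}(\cdot + y_k^{(n+1)})$. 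The divergence $|y_k^{(n+1)} - y_k^{(i)}| \to \infty$ for $i \leq n$ is forced as follows: if some $|y_k^{(n+1)} - y_k^{(i)}|$ stayed bounded, I could pass to a subsequence where the difference is constant, and then compute the weak limit of $r_k^{(n)}(\cdot + y_k^{(n+1)})$ directly using the Brezis--Lieb subtraction, obtaining zero, contradicting $w^{(n+1)}\neq 0$.

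\textbf{Step 4 (energy book-keeping and conclusion).} The iterated Brezis--Lieb identity gives, at each step,
\[
\|r_k^{(n)}\|_m^m = \|u_k\|_m^m - \sum_{i=1}^n \|w^{(i)}\|_m^m + o(1), \qquad k \to \infty,
\]
because the translations $y_k^{(i)}$ separate asymptotically and $w^{(i)}(\cdot - y_k^{(i)}) \rightharpoonup 0$ whenever $|y_k^{(i)}|\to\infty$. Hence $\sum_{i=1}^n \|w^{(i)}\|_m^m \leq \limsup_k \|u_k\|_m^m$, which yields \eqref{seis.tres.sub} and in particular $\|w^{(n)}\|_m \to 0$. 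Choosing $y_k^{(n+1)}$ so that $\int_{y_k^{(n+1)}+Q}|r_k^{(n)}|^m\dx \geq \tfrac12 \limsup_k \sigma(r_k^{(n)})$ and using that $\|w^{(n+1)}\|_m^m \geq \tfrac12 \limsup_k \sigma(r_k^{(n)})$, I deduce $\limsup_k \sigma(r_k^{(n)}) \to 0$ as $n \to \infty$. A diagonal extraction then yields a single subsequence of $(u_k)$ along which $\sigma(r_k^{(n)}) \to 0$ as $k,n \to \infty$ jointly, and Step 2 gives \eqref{seis.quatro.sub}. Uniformity of the series in $k$ is the statement $\sup_k \|\sum_{n\geq N}w^{(n)}(\cdot - y_k^{(n)})\|_\theta \to 0$ as $N\to\infty$; this follows from the $\ell^m$-summability of $(\|w^{(n)}\|_m)$ together with boundedness in $W^{2,m}$ and interpolation.

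\textbf{Main obstacle.} The delicate point is Step 3: showing that the weak limit of $r_k^{(n)}(\cdot + y_k^{(n+1)})$ equals the new profile $w^{(n+1)}$ rather than a superposition with existing profiles. This is precisely where the asymptotic orthogonality $|y_k^{(n+1)}-y_k^{(i)}| \to \infty$ must be established simultaneously with the extraction, typically by a case distinction on whether the difference remains bounded, and invoking that $r_k^{(n)}(\cdot + y_k^{(i)}) \rightharpoonup 0$ by the inductive construction. Once this is in place, the rest is routine manipulation of the Brezis--Lieb identity and the Lions vanishing lemma.
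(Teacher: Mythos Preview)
Your proposal is correct and follows essentially the same route as the paper. The paper's proof is very terse: it establishes the cocompactness property for the translation group $T_{\mathbb{Z}^N}$ (your Step~2, which it obtains by citing \cite{tintacocompsub}) together with the auxiliary Lemma~\ref{l_convaux}(iii) characterizing when translated sequences converge weakly to zero, and then invokes the abstract profile-decomposition machinery of Solimini--Tintarev \cite[Theorem~2.6]{tintabanach} and \cite[Theorem~3.3]{tintageral}, whose proof \emph{is} precisely the bubble-extraction iteration you carry out in Steps~1, 3, 4. The iterated Br\'ezis--Lieb identity you use in Step~4 is the subcritical analogue of the paper's Lemma~\ref{l_brezislieb_it}. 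So you have unpacked what the paper black-boxes; nothing is genuinely different.

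One small imprecision: in Step~2 you write the local interpolation inequality with a parameter $\alpha$ and then ``choose $\alpha$ so that $\alpha\theta = m$''. But $\alpha$ is not free---interpolation between $L^m(Q)$ and $L^{m_\ast}(Q)$ (the latter via Sobolev) fixes $\alpha$ as a function of $\theta$, and the identity $\alpha\theta = m$ only holds for one particular $\theta$. The standard repair is either to prove vanishing for that one $\theta_0$ and then interpolate globally between $L^m$ and $L^{m_\ast}$ to cover the whole interval, or to sum the local estimates using H\"older's inequality in the index $y\in\mathbb{Z}^N$ with exponents $m/(\alpha\theta)$ and its conjugate. Either fix is routine and does not affect the architecture of your argument.
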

	\begin{remark} $\mathrm{(i)}$ Theorem \ref{teo_profsub} can be seen as a generalization of the P.L. Lions compactness Lemma  (see \cite[Lemma I.1 and Remark I.4]{lionscompcase2}) and is a useful result when we are proving precompactness of Palais-Smale sequences for energy functionals associated with nonlinear elliptic problems. Indeed, taking  $w^{(n)} = 0,$ for all $n \in \mathbb{N}_0,$ we obtain that $u_k \rightarrow 0$ in $L^{\theta}(\mathbb{R}^{N})$ for all $\theta \in (m,m _\ast ).$ In this case, we argue by contradiction and, as a consequence, guarantee the existence of a non zero weak limit of a dislocated sequence as given in \eqref{aplions}.
		
		\noindent$\mathrm{(ii)}$ A natural question in this context is whether we can replace \eqref{seis.tres} in Theorem \ref{teo_profcrit} with
		\begin{equation}\label{aoutra}
			\sum _{n \in \mathbb{N}_{\ast}}\| \Delta w^{(n)} \|_m^m  \leq \limsup_k \|\Delta u_k \|_m ^m.
		\end{equation}
		Estimate \eqref{seis.tres} deals with the norm of the  Lebesgue space $L^p(\mathbb{R}^N),$ while \eqref{aoutra} with the norm of Sobolev space $\mathcal{D}^{2,m}(\mathbb{R}^N).$ Similarly, we can ask if we can substitute  \eqref{seis.tres.sub} in Theorem \ref{teo_profsub} with
		\begin{equation*}
			\sum _{n \in \mathbb{N}_0}\| w^{(n)} \|_{2,m}^{m } \leq \limsup_k \|u_k \|_{2,m} ^{ m },
		\end{equation*}
		where $\| \cdot \| _{2,m}$ stands for the usual Sobolev norm of $W^{2,m}(\mathbb{R}^N).$  If one can provide a positive answer for this open problems, then we can consider more general hypotheses to treat the existence of weak solutions for \eqref{P}  under our approach. See \cite{tintageral,tintabanach} for more detailed discussion on this subject.    
	\end{remark}
	We now describe the additional compactness given by the radial subspace of $\mathcal{D}^{2,m}(\mathbb{R}^N)$ in terms of the weak profile decomposition.
	\begin{proposition}\label{prop_radial}
		Let $(u_k)$ be a bounded sequence in $\mathcal{D}^{2,m}_\rad (\mathbb{R}^N),$ $(w^{(n)})$ and $(y_k^{(n)})$ be the collection of profiles given in Theorem \ref{teo_profcrit}. Then $(y_k^{(n)})_k = 0$ for all $n \in \mathbb{N}_\ast \setminus \{1\},$ $w^{(n)}\in \mathcal{D}^{2,m}_\rad (\mathbb{R}^N)$ and $\mathbb{N}_0 = \{1\}.$
	\end{proposition}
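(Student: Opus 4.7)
The plan is to split cases by the type of index. For $n=1$ nothing is to be done: since $y_k^{(1)}=0$ and $j_k^{(1)}=0$, \eqref{seis.um} reduces to $u_k\rightharpoonup w^{(1)}$ in $\mathcal{D}^{2,m}(\mathbb{R}^N)$, and because $\mathcal{D}^{2,m}_\rad(\mathbb{R}^N)$ is the common kernel of the bounded linear maps $u\mapsto u-u\circ R$, $R\in O(N)$, it is weakly closed, so $w^{(1)}$ is radial. For the remaining indices the strategy is to exploit the radial symmetry of $u_k$ to manufacture, for each rotation $R$, a second extraction sequence whose profile is $w^{(n)}\circ R$, and then play this off against the summability \eqref{seis.tres}.

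The underlying identity, immediate from $u_k\circ R=u_k$ and $R\in O(N)$ being an isometry, is
\[
u_k\bigl(2^{-j_k^{(n)}}Rx+y_k^{(n)}\bigr)=u_k\bigl(2^{-j_k^{(n)}}x+R^{-1}y_k^{(n)}\bigr),\qquad x\in\mathbb{R}^N,
\]
so combining \eqref{seis.um} with the $O(N)$-invariance of the $\mathcal{D}^{2,m}$-norm yields
\[
2^{-\frac{N-2m}{m}j_k^{(n)}}u_k\bigl(2^{-j_k^{(n)}}\cdot+R^{-1}y_k^{(n)}\bigr)\rightharpoonup w^{(n)}\circ R\quad\text{weakly in }\mathcal{D}^{2,m}(\mathbb{R}^N).
\]
Writing $\tilde y_k^{(n)}:=2^{j_k^{(n)}}y_k^{(n)}$, the alternative extraction $(R^{-1}y_k^{(n)},j_k^{(n)})$ is asymptotically equivalent to $(y_k^{(n)},j_k^{(n)})$ in the sense of \eqref{seis.dois} exactly when $|(I-R^{-1})\tilde y_k^{(n)}|$ stays bounded; otherwise it occupies an orthogonal slot in the decomposition, and matching it against \eqref{seis.quatro} identifies it (up to a bounded translation absorbed into the profile) with some $w^{(n')}$, $n'\neq n$.

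For $n\in\mathbb{N}_0\setminus\{1\}$ orthogonality with the index $1$ forces $|y_k^{(n)}|\to\infty$. Passing to a subsequence on which $y_k^{(n)}/|y_k^{(n)}|\to e\in S^{N-1}$, I would select (possible because $N\geq 2$) a countably infinite family $\{R_\ell\}_{\ell\in\mathbb{N}}\subset O(N)$ with $\{R_\ell^{-1}e\}$ pairwise distinct, whence
\[
|R_\ell^{-1}y_k^{(n)}-R_{\ell'}^{-1}y_k^{(n)}|=|y_k^{(n)}|\,|R_\ell^{-1}e-R_{\ell'}^{-1}e|\,(1+o(1))\longrightarrow\infty,\quad\ell\neq\ell'.
\]
By the previous paragraph each $R_\ell$ produces a pairwise distinct index $n_\ell\in\mathbb{N}_0$ with $\|w^{(n_\ell)}\|_p=\|w^{(n)}\circ R_\ell\|_p=\|w^{(n)}\|_p>0$, which is incompatible with \eqref{seis.tres}; hence $\mathbb{N}_0=\{1\}$.

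The same rotation argument applied to the rescaled centers $\tilde y_k^{(n)}$ forces $(\tilde y_k^{(n)})$ to remain bounded for every $n\in\mathbb{N}_+\cup\mathbb{N}_-$. When $n\in\mathbb{N}_+$, the integrality $y_k^{(n)}\in\mathbb{Z}^N$ combined with $2^{j_k^{(n)}}\to+\infty$ forces $y_k^{(n)}=0$ for large $k$. When $n\in\mathbb{N}_-$, boundedness of $\tilde y_k^{(n)}$ together with $2^{j_k^{(n)}}\to 0$ makes $(y_k^{(n)},j_k^{(n)})$ equivalent under \eqref{seis.dois} to $(0,j_k^{(n)})$, so we may translate the profile by a bounded vector and replace $y_k^{(n)}$ by $0$. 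In either case the extracted sequence $2^{-\frac{N-2m}{m}j_k^{(n)}}u_k(2^{-j_k^{(n)}}\cdot)$ is radial, so its weak limit $w^{(n)}$ lies in $\mathcal{D}^{2,m}_\rad(\mathbb{R}^N)$. The main technical point, common to the two arguments, is the matching step: turning a nontrivial weak limit produced by a rotated extraction into a genuine distinct profile of the decomposition, which relies on the uniqueness-up-to-equivalence property built into the proof of Theorem~\ref{teo_profcrit}.
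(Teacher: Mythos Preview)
Your overall strategy---rotate the (rescaled) centers to manufacture many copies of a nonzero profile and contradict the energy bound---is precisely the mechanism the paper uses. Your case split on $\mathbb{N}_0$ versus $\mathbb{N}_\pm$, and the observation that for $n\in\mathbb{N}_+$ the integrality $y_k^{(n)}\in\mathbb{Z}^N$ together with $2^{j_k^{(n)}}\to\infty$ forces $y_k^{(n)}=0$ outright (not merely after a bounded shift), are both fine and slightly sharper than what the paper writes.

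The one genuine gap is what you yourself call ``the main technical point'': you assert that each rotated extraction $(R_\ell^{-1}y_k^{(n)},j_k^{(n)})$, being asymptotically orthogonal to the others, must match (via \eqref{seis.quatro}) a \emph{distinct} index $n_\ell$ of the original decomposition, so that \eqref{seis.tres} is violated. This exhaustion/uniqueness property is not part of the statement of Theorem~\ref{teo_profcrit}; deducing it from \eqref{seis.quatro} requires pulling the remainder back by the rotated dislocation, using the uniform-in-$k$ convergence of the series, and checking that each term goes weakly to zero under an orthogonal dislocation. There is also the nuisance that $R_\ell^{-1}y_k^{(n)}\notin\mathbb{Z}^N$ in general, so a rounding step is needed before any matching. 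The paper avoids all of this: instead of identifying the rotated profiles with existing indices, it applies Lemma~\ref{l_brezislieb_it} (an iterated Br\'ezis--Lieb lemma) directly to the $M$ mutually orthogonal rotated extractions to obtain
\[
\limsup_k\|u_k\|_{m_\ast}^{m_\ast}\ \ge\ \sum_{i=1}^{M}\bigl\|w^{(n)}\circ\eta_i^{-1}\bigr\|_{m_\ast}^{m_\ast}\ =\ M\,\|w^{(n)}\|_{m_\ast}^{m_\ast},
\]
a contradiction for $M$ large. This needs only \eqref{seis.um}, \eqref{seis.dois} and boundedness of $(u_k)$ in $L^{m_\ast}$, never \eqref{seis.tres} or \eqref{seis.quatro}. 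Your careful choice of rotations (pass to a subsequence with $y_k^{(n)}/|y_k^{(n)}|\to e$ and pick $R_\ell$ with pairwise distinct $R_\ell^{-1}e$) is in fact necessary in both arguments; the paper is a bit cavalier there.

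Finally, for the replacement step in $\mathbb{N}_-$ (absorb the limit of $2^{j_k^{(n)}}y_k^{(n)}$ into the profile and reset the centers to zero), you assert but do not verify that the modified family still satisfies \eqref{seis.quatro}. The paper does carry this out (estimate \eqref{CH1_gigante} together with Lemma~\ref{l_convaux}), and this is where the uniform convergence of the series in \eqref{seis.quatro} is actually used.
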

	\begin{proof}
		The idea is to find a new collection of profiles $(\overline{w}^{(n)}) _{n \in \mathbb{N} _{\sharp}},$ $(\overline{y}_k ^{(n)}) _ {k \in \mathbb{N}} \subset \mathbb{Z}^N$ and $(\overline{j} _k ^{(n)}) _{k \in \mathbb{N}}  \subset \mathbb{Z},$ $n \in \mathbb{N}_{\sharp},$ that satisfy the desired conditions. From this point, without loss of generality, we replace $w^{(n)}$ by $\overline{w}^{(n)}$ and $\mathbb{N}_\ast$ by $\mathbb{N}_{\sharp}$ in Theorem \ref{teo_profcrit}. In order to do this, we consider the set
		\begin{equation*}
			\mathbb{N} _{\sharp} = \left\lbrace n \in \mathbb{N} _\ast\setminus\{1\}:|2 ^{j_k ^{(n)}}y_k ^{(n)}|\text{ is bounded} \right\rbrace .
		\end{equation*}
		For each  $n \in \mathbb{N} _{\sharp}$, up to a subsequence,  standard diagonal argument yields,
		\[
		2 ^{j_k ^{(n)}}y_k ^{(n)} \rightarrow a^{(n)} \text{ as } k\rightarrow \infty.
		\] 
		Moreover, for $n\in \mathbb{N} _{\sharp},$ we have  
		\begin{equation*}
			2 ^{-\frac{N-2m}{m}j_k ^{(n)}} u_k (2 ^{-j_k ^{(n)}}\cdot ) -  2 ^{-\frac{N-2m}{m}j_k ^{(n)}} u_k (2 ^{-j_k ^{(n)}}(\cdot - a^{(n)})+y_k ^{(n)} ) \rightharpoonup 0,
		\end{equation*}
		in $\mathcal{D}^{2,m}(\mathbb{R}^N)$ as $k\rightarrow \infty.$
		Since $u \mapsto u(\cdot - a^{(n)})$ is linear and continuous,
		\begin{equation*}
			2 ^{-\frac{N-2m}{m}j_k ^{(n)}} u_k (2 ^{-j_k ^{(n)}}(\cdot - a^{(n)})+y_k ^{(n)} ) \rightharpoonup w^{(n)}(\cdot - a^{(n)}),
		\end{equation*}
		in $\mathcal{D}^{2,m}(\mathbb{R}^N),$ as $k\rightarrow \infty.$ Therefore
		\begin{equation*}
			2 ^{-\frac{N-2m}{m}j_k ^{(n)}} u_k (2 ^{-j_k ^{(n)}}\cdot ) \rightharpoonup w^{(n)}(\cdot - a^{(n)}) \text{ in }\mathcal{D}^{2,m}(\mathbb{R}^N),\text{ as }k\rightarrow \infty.
		\end{equation*}
		Based on above, we set 
		\begin{equation*}
			\left\{
			\begin{aligned}
				&\overline{w}^{(n)} = w^{(n)}(\cdot - a^{(n)}),\quad &&\overline{y}^{(n)}_k = 0,&\quad &\overline{j}^{(n)}_k = j_k ^{(n)},\quad&\text{for }n \in \mathbb{N} _{\sharp},\\
				&\overline{w}^{(n)} = w^{(n)},\quad&&\overline{y}^{(n)}_k = y^{(n)}_k,&\quad &\overline{j}^{(n)}_k = j_k ^{(n)},\quad&\text{for }n \in \mathbb{N}_\ast \setminus \mathbb{N} _{\sharp}.
			\end{aligned}
			\right.
		\end{equation*}
		It is easy to see that $(\overline{w}^{(n)})_{n \in \mathbb{N} _{\sharp}},$ $(\overline{y}_k ^{(n)}) _ {k \in \mathbb{N}}$ and $(\overline{j} _k ^{(n)}) _{k \in \mathbb{N}}$ satisfy conditions \eqref{seis.um}--\eqref{seis.tres}. To see that $(\overline{w}^{(n)})_{n \in \mathbb{N} _{\sharp}}$ also satisfies \eqref{seis.quatro}, we consider the  estimate
		\begin{equation}\label{CH1_gigante}
			\left\| u_k  - \sum _{n \in \mathbb{N}_{\ast }} \overline{d}^{(n)}_k \overline{w}^{(n)}  \right\| _{m_\ast}  \leq 
			\left\| u_k - \sum _{n \in \mathbb{N}_{\ast }} d^{(n)}_kw^{(n)} \right\|  _{m_\ast}  + \left\| \sum _{n \in \mathbb{N}_{\ast }} d^{(n)}_kw^{(n)} - \sum _{n \in \mathbb{N}_{\ast }} \overline{d}^{(n)}_k \overline{w}^{(n)} \right\|  _{m_\ast},
		\end{equation}
		where $d^{(n)}_k u = 2 ^{\frac{N-2m}{m} j_k ^{(n)} } u\big(2^{j _k ^{(n)}} ( \cdot - y_k^{(n)} )$ and $\overline{d}^{(n)}_k u = 2 ^{\frac{N-2m}{m} \overline{j}_k ^{(n)} } u\big(2^{\overline{j} _k ^{(n)}} ( \cdot - \overline{y}_k^{(n)} ),$ $u \in \mathcal{D}^{2,m}(\mathbb{R}^N).$ The first term in the right-hand side of inequality \eqref{CH1_gigante} goes to zero due to \eqref{seis.quatro}.
		Using diagonal argument, extracting successive subsequences for each $n \in \mathbb{N}_{\ast }$, $\sum _{n \in \mathbb{N}_{\ast }} \overline{d}^{(n)}_k \overline{w}^{(n)}$ is uniformly convergent in $k.$ 
		Thus from the uniform convergence of \eqref{seis.quatro}, we can assume that $\mathbb{N}_\ast$ is finite. Since
		\begin{align*}
			\left\| \sum _{n \in \mathbb{N}_{\ast }} d^{(n)}_kw^{(n)} - \sum _{n \in \mathbb{N}_{\ast }} \overline{d}^{(n)}_k \overline{w}^{(n)} \right\|  _{m_\ast} &\leq \sum _{n \in \mathbb{N} _{\sharp}} \left\| \delta_{j_k^{(n)}}( w^{(n)} - g_{a^{(n)} -2^{j_k^{(n)}} y_k ^{(n)}}  w^{(n)} )\right\|  _{m_\ast}\\
			&= \sum _{n \in \mathbb{N} _{\sharp}}\left\|  w^{(n)} - g_{a^{(n)} -2^{j_k^{(n)}} y_k ^{(n)}}  w^{(n)}  \right\|  _{m_\ast},
		\end{align*}
		where $\delta _j u(x) = 2 ^{\frac{N-2m}{m} j }u(2 ^j x), \; g_yu(x) = u(x-y),$ $u \in \mathcal{D}^{2,m}(\mathbb{R}^N),$ and the convergence to zero for the second term in \eqref{CH1_gigante} follows by Lemma \ref{l_convaux}.
		
		Now we prove: $\overline{w}^{(n)}\in \mathcal{D}^{2,m}_\rad (\mathbb{R}^N),$ $n \in \mathbb{N} _{\sharp}.$ Let $\eta \in  O(N)$ be an isometry of $\mathbb{R}^N.$ For $n \in \mathbb{N} _{\sharp},$ 
		\begin{equation}\label{CH1_weaklimit}
			\begin{aligned}2 ^{-\frac{N-2m}{m}\overline{j}_k ^{(n)}} u_k (2 ^{-\overline{j}_k ^{(n)}} \eta (x)) &= 2 ^{-\frac{N-2m}{m}\overline{j}_k ^{(n)}} ( u_k \circ \eta )(2 ^{-\overline{j}_k ^{(n)}} x) \\
				&= 2 ^{-\frac{N-2m}{m}\overline{j}_k ^{(n)}} u_k (2 ^{-\overline{j}_k ^{(n)}} x).
			\end{aligned}
		\end{equation}
		Since $T_\eta (u) = u \circ \eta$ is continuous in $\mathcal{D}^{2,m}(\mathbb{R}^N),$ passing  to the weak limit in \eqref{CH1_weaklimit}, we obtain $\overline{w}^{(n)} \circ \eta = \overline{w}^{(n)},$ for all $ n \in \mathbb{N} _{\sharp}$ and $\eta \in O(N).$  To conclude the proof we now show that $\overline{w}^{(n)} = 0$ for all $n \in \mathbb{N}_\ast \setminus \mathbb{N} _{\sharp}.$ 
		
		Arguing by contradiction, we assume the existence of $\overline{w}^{(n_0)}\neq 0$ for some $n_0 \in \mathbb{N}_\ast \setminus \mathbb{N} _{\sharp}.$  By the continuity of $T_{\eta^{-1}},$
		\begin{equation*}
			2 ^{-\frac{N-2m}{m}\overline{j}_k ^{(n)}} u_k (2 ^{-\overline{j}_k ^{(n)}}\cdot +\eta \overline{y}^{(n)}_k) = T_{\eta^{-1}} (2 ^{-\frac{N-2m}{m}\overline{j}_k ^{(n)}}  u_k (2 ^{-\overline{j}_k ^{(n)}} \cdot + \overline{y}_k ^{(n)} ) )\rightharpoonup \overline{w} ^{(n)} \circ \eta ^{-1} \text{ in }\mathcal{D}^{2,m}(\mathbb{R}^N).
		\end{equation*}
		Let $O_M = \{\eta _i\in \mathcal{O}(N)\setminus\{1\} :i=1,\ldots,M\}$ be an arbitrary distinct collection in $\mathcal{O}(N).$ Since $2^{\overline{j}_k ^{(n)}}|\overline{y}_k ^{(n)}|\rightarrow \infty,$ we have that $2^{\overline{j}_k ^{(n)}}|\eta_i \overline{y}_k ^{(n)} -\eta_j \overline{y}_k ^{(n)}| \rightarrow \infty,$ for all $ i \neq j.$ Consequently, from Lemma \ref{l_brezislieb_it} we get the following estimate, 
		\begin{equation*}
			\limsup_{k \rightarrow \infty}\|u_k\|^{m_\ast} _{m_\ast}  \geq \sum _{i=1}^M \|\overline{w}^{(n)} \circ \eta^{-1} _i\|^{m_\ast} _{m_\ast}= \sum _{i=1}^M \|\overline{w}^{(n)}\|^{m_\ast} _{m_\ast}= M \|\overline{w}^{(n)}\|^{m_\ast} _{m_\ast},
		\end{equation*}which is  a contradiction with the fact that $(u_k)$ is bounded in $\mathcal{D}^{2,m}(\mathbb{R}^N).$
	\end{proof}
	\subsection{Proof of Theorems \ref{teo_profcrit} and \ref{teo_profsub}} 
	The proofs are similar in spirit to  \cite[Theorems 2.6 and 7.3]{tintabanach} and \cite[Theorem 2.5]{tintacocompsub}.
	\begin{definition}\cite[Definition 2.1]{tintabanach}
		Let $Z$ be a Banach space, and let $D$ be a bounded set of bijective isometries on $Z$ containing the identity $I$ such that $D^{-1}= \{h^{-1}:h \in D \}$ is also a bounded set. The sequence $(u_k)\subset Z$ converges to zero $D$-weakly, and we write $u_k \stackrel{D}{\rightharpoonup}0,$ if $g_k ^{-1}u_k \rightharpoonup 0$ for every $(g_k) \subset D.$ We use the notation $u_k \stackrel{D}{\rightharpoonup}u,$ to mean that $u_k  - u \stackrel{D}{\rightharpoonup} 0.$
	\end{definition}
	We consider the groups
	\begin{align*}
		D_{\mathbb{R} ^N} &:= \left\lbrace d_{y,j} : \mathcal{D}^{2,m} (\mathbb{R}^N) \rightarrow \mathcal{D}^{2,m} (\mathbb{R}^N): \right.\\
		&\qquad \qquad \left. d_{y,j} u (x) := 2 ^{\frac{N-2m}{m} j} u (2 ^j (x-y)),\ y \in \mathbb{R}^N, \ j \in \mathbb{R} \right\rbrace,\\
		T_{\mathbb{R}^N } &:= \left\lbrace g_y : W^{2,m} (\mathbb{R} ^N) \rightarrow W^{2,m} (\mathbb{R} ^N) : g_y u(x) = u(x-y), \ y \in \mathbb{R}^N \right\rbrace,
	\end{align*}
	$
	D_{\mathbb{Z} ^N}  := \{ d_{y,j} \in D_{\mathbb{R} ^N} : y \in \mathbb{Z}^N, j \in \mathbb{Z} \}$ and 
	$ 
	T_{\mathbb{Z} ^N} := \{ g_y \in T_{\mathbb{R} ^N} : y \in \mathbb{Z}^N\}.
	$
	
	\begin{remark} Here, we have another natural question on our profile decomposition results.  
		Is it possible to replace the action of dilations $u \mapsto 2 ^{\frac{N-2m}{m} j} u (2^{j _k ^{(n)}} \cdot )$ by dilations like $u \mapsto \gamma ^{\frac{N-2m}{m} j} u (\gamma ^{j _k ^{(n)}} \cdot ),$ $\gamma >1,$ in Theorem \ref{teo_profcrit}?  Having an affirmative answer to this question,  one can consider more general self-similar nonlinearities, as explained in Sect. \ref{s_selfsimilar}. That question is equivalent to prove cocompactness type results for the group
		\begin{equation*}
			D^{(\gamma)}_{\mathbb{Z} ^N} := \left\lbrace d_{y,j} : \mathcal{D}^{2,m} (\mathbb{R}^N) \rightarrow \mathcal{D}^{2,m} (\mathbb{R}^N) : d_{y,j} u (x) := \gamma ^{\frac{N-2m}{m} j} u (\gamma ^j (x-y)),\ y \in \mathbb{R}^N, \ j \in \mathbb{R} \right\rbrace,
		\end{equation*}
		that is, if $u_k \stackrel{D^{(\gamma)}_{\mathbb{Z} ^N}}{\rightharpoonup} 0$ then $u_k \rightarrow 0$ in $L^{m_\ast}(\mathbb{R}^N).$
		See more details in \cite{tintabanach,tintageral}.
	\end{remark}
	
	Theorems \ref{teo_profcrit} and \ref{teo_profsub} follow by the next results about the behavior of actions of the groups $D_{\mathbb{R} ^N}$ and $T_{\mathbb{R}^N }.$
	\begin{lemma}\label{l_convaux}
		$\mathrm{(i)}$ Let $(y_k, j_k) \subset \mathbb{R}^N \times \mathbb{R},$ such that $(y_k, j_k) \rightarrow (y, j).$ Then $d_{y_k, j_k} u \rightarrow d_{y, j} u,$ in $\mathcal{D}^{2,m}(\mathbb{R}^N),$ for all $u \in \mathcal{D}^{2,m} (\mathbb{R}^N).$
		
		\noindent$\mathrm{(ii)}$ Let $u \in \mathcal{D}^{2,m} (\mathbb{R}^N) \setminus \{ 0 \}.$ The sequence $(d_{y_k,j_k}u )\subset \mathcal{D}^{2,m} (\mathbb{R}^N),$ with $(y_k,j_k) \subset \mathbb{R}^N \times \mathbb{R},$ converges weakly to zero if and only if $|j_k| + |y_k| \rightarrow \infty.$
		
		\noindent$\mathrm{(iii)}$ Let $(y_k)$ be a sequence in $\mathbb{R} ^N$ and $u \in W^{2,m}(\mathbb{R}^N)\setminus\{0\}.$ The sequence $( u (\cdot - y_k ) )$ converges weakly to zero in $W^{2,m}(\mathbb{R}^N)$ if, and only if $|y_k| \rightarrow \infty.$
	\end{lemma}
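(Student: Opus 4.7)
\textbf{Proof plan for Lemma~\ref{l_convaux}.}

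\medskip

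For (i), the underlying observation is that each $d_{y,j}$ is an isometry of $\mathcal{D}^{2,m}(\mathbb{R}^N)$: a direct chain-rule computation gives $\Delta(d_{y,j}u)(x) = 2^{Nj/m}(\Delta u)(2^{j}(x-y))$, and the substitution $w = 2^{j}(x-y)$ yields $\|\Delta(d_{y,j}u)\|_m = \|\Delta u\|_m$. Given $(y_k,j_k)\to(y,j)$ and $u\in \mathcal{D}^{2,m}(\mathbb{R}^N)$, I pick $u_n\in C^\infty_0(\mathbb{R}^N)$ with $u_n\to u$; by the isometry property, $\|d_{y_k,j_k}u - d_{y_k,j_k}u_n\|_{\mathcal{D}^{2,m}} = \|u-u_n\|_{\mathcal{D}^{2,m}}$ uniformly in $k$, so a $3\varepsilon$-argument reduces the claim to $u\in C^\infty_0$. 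For such $u$, the map $(y,j)\mapsto 2^{Nj/m}(\Delta u)(2^j(\cdot - y))$ is continuous into $L^m(\mathbb{R}^N)$ by dominated convergence, using the pointwise continuity of the integrand and the uniform compact support.

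For (ii), I treat the two directions separately. For sufficiency, assume $|j_k|+|y_k|\to\infty$; since $\mathcal{D}^{2,m}$ is reflexive and $(d_{y_k,j_k}u)$ is bounded (isometry), weak convergence to zero is equivalent to distributional convergence, i.e., $\int d_{y_k,j_k}u\cdot \varphi \,\mathrm{d}x\to 0$ for every $\varphi\in C^\infty_0(\mathbb{R}^N)$. Reducing to $u\in C^\infty_0$ as in (i), the substitution $w = 2^{j_k}(x-y_k)$ gives
\[
\int d_{y_k,j_k}u \cdot \varphi \,\mathrm{d}x \;=\; 2^{\alpha j_k}\!\int u(w)\,\varphi(2^{-j_k}w + y_k)\,\mathrm{d}w, \qquad \alpha := -\frac{(m-1)N+2m}{m}<0.
\]
Extracting a subsequence with $j_k$ convergent in $[-\infty,+\infty]$ and $|y_k|$ either bounded or diverging, I distinguish: if $j_k$ stays bounded, then $|y_k|\to\infty$ and the supports of $u$ and of $\varphi(2^{-j_k}\cdot+y_k)$ are disjoint for $k$ large; if $j_k\to+\infty$, the prefactor $2^{\alpha j_k}\to 0$ absorbs the $L^1$-bound of the integrand; if $j_k\to-\infty$, the support of $\varphi(2^{-j_k}\cdot+y_k)$ has measure $O(2^{Nj_k})$, so bounding $u$ in $L^\infty$ gives a total estimate $O(2^{(\alpha+N)j_k}) = O(2^{\frac{N-2m}{m}j_k})\to 0$. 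For necessity, if $|j_k|+|y_k|\not\to\infty$, I extract a convergent subsequence $(y_{k_\ell},j_{k_\ell})\to(y,j)$; part (i) gives strong convergence to $d_{y,j}u$, which is nonzero since $d_{y,j}$ is an isometry and $u\neq 0$, contradicting the weak limit being zero.

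Part (iii) follows by exactly the same three-step scheme applied to the simpler group of translations: $g_y$ is an isometry of $W^{2,m}(\mathbb{R}^N)$, so density plus dominated convergence yield the continuity $g_{y_k}u\to g_y u$; for weak convergence to zero, sufficiency reduces (by density) to $u\in C^\infty_0$ and then follows from disjointness of supports once $|y_k|$ is large compared to $\operatorname{supp}(u)\cup\operatorname{supp}(\varphi)$, while necessity uses the same subsequential extraction combined with (i)'s analogue.

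The main obstacle is the blow-down case $j_k\to-\infty$ in (ii): the prefactor $2^{\alpha j_k}$ diverges, so one must extract precise decay from the shrinking support of the pulled-back test function, and this is where the identity $\alpha+N = (N-2m)/m > 0$ (equivalently, the critical embedding hypothesis $N>2m$) plays an essential role.
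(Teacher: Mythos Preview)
Your proof is correct and follows the same skeleton as the paper's: density reduction to $C^\infty_0$ via the isometry property, the contrapositive of necessity via part (i), and a case analysis for sufficiency. The one notable difference is in how you handle the sufficiency direction of (ii). The paper attempts to bound $|\Phi(d_{y_k,j_k}u)|$ for a generic $\Phi\in(\mathcal{D}^{2,m})^*$ directly, while you instead invoke reflexivity to reduce to testing against $\varphi\in C^\infty_0$ through the $L^{m_\ast}$ embedding, then carry out the change of variables explicitly. Your route is more transparent: the exponents $\alpha$ and $\alpha+N=(N-2m)/m$ fall out cleanly, and the role of the hypothesis $N>2m$ in the blow-down case $j_k\to-\infty$ is made fully explicit, whereas in the paper this dependence stays hidden. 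Both arguments are standard, but yours has the advantage of being self-contained and of isolating exactly where the critical scaling enters.
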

	\begin{proof}
		(i) By density and the invariance of the norm with respect to $D_{\mathbb{R} ^N},$ we may assume that $u \in C^\infty_0(\mathbb{R}^N).$ Since,
		\begin{equation*}
			\int_{\mathbb{R}^N} |\Delta( d_{y_k, j_k} u- d_{y, j} u)|^m\dx = \int_{\mathbb{R}^N} |\Delta u - 2 ^{\frac{N}{m}(j-j_k)} \Delta u(2 ^{j-j_k} \cdot + 2 ^j(y_k - y) )|^m \dx,
		\end{equation*}
		and the fact that $((y_k, j_k))$ is bounded,
		we may apply Lebesgue's dominated convergence theorem to conclude that $||\Delta (d_{y_k, j_k} u - d_{y, j} u)||_m \rightarrow 0 .$
		
		(ii) We argue by contradiction and assume that $|j_k| + |y_k|  \not \rightarrow \infty.$ So, up to subsequence, $j_k \rightarrow j$ and $y_k \rightarrow y.$ Hence by the above assertion, $d_{y_k, j_k} u \rightarrow d_{y, j} u$ in $\mathcal{D}^{2,m}(\mathbb{R}^N).$ Now consider the continuous linear functional
		\begin{equation*}
			\Phi_\varphi(u) = \int_{\mathbb{R}^N} |\Delta \varphi|^{m-2}\Delta \varphi \Delta u \dx,\ \varphi \in \mathcal{D}^{2,m}(\mathbb{R}^N).
		\end{equation*}
		Thus, $\Phi_\varphi(d_{y_k, j_k} ) \rightarrow \|\Delta d_{y, j} u \|^m_m = \|\Delta u \|_m ^m.$ This leads a contradiction with the hypothesis which implies that $\Phi_\varphi(d_{y_k, j_k} ) \rightarrow 0.$ Now suppose that $|j_k| + |y_k| \rightarrow \infty.$ Assuming $u \in C^\infty _0 (\mathbb{R}^N)$ we analyze two cases:

		\noindent  {\bf   Case 1:}
		$ j_k \rightarrow \infty $ or $j_k \rightarrow - \infty.$ Take $\Phi \in (\mathcal{D}^{2,m}(\mathbb{R}^N))^{\ast}.$ 
		Then,
		\begin{equation*}
			|\Phi (d_{y_k, j_k} )|^m \leq \| \Phi \| _{\ast} \| \Delta (d_{y, j} u )\|_m ^m \leq  2^{N j_k}\| \Phi \| _{\ast} \| \Delta u \| _\infty |\supp u|.
		\end{equation*}
		Consequently, if $j_k \rightarrow - \infty,$ then $\Phi (d_{y_k, j_k} ) \rightarrow 0.$ At the same time, if $j_k \rightarrow  \infty,$ using $\| \Delta (d_{y_k, j_k} u )\|^m_m = \| \Delta (d_{-2 ^{j_k}y_k, -j_k} u )\|^m_m$ and the same estimate above to conclude that $\Phi (d_{y_k, j_k} ) \rightarrow 0.$ \\
		\noindent {\bf Case 2:}  $j_k \rightarrow j$ and $|y_k| \rightarrow \infty.$ In this case we apply Lebesgue's dominated convergence theorem, since $d_{y_k, j_k} u(x) \rightarrow 0$  almost everywhere in $\mathbb{R}^N,$ and $|d_{y_k, j_k} u(x)|\leq C \| u\| _\infty \in L^1(\supp u).$

		(iii)  Minor changes in the preceding arguments for the functional 
		\begin{equation*}
			\Phi_\varphi(u) = \int_{\mathbb{R}^N} |\Delta \varphi|^{m-2}\Delta \varphi \Delta u +|\varphi|^{m-2}\varphi  u\dx,\ \varphi \in W^{2,m}(\mathbb{R}^N).
		\end{equation*}
		readily show this item. \end{proof}
	
	\begin{lemma}\label{l_brezislieb_it}
		Let $(u_k) \subset \mathcal{D}^{2,m}(\mathbb{R}^N)$ be a bounded sequence and $p=mN/(N-2m).$ Suppose that $(w ^{(n)}) _{n \in \mathbb{N}_{\ast } } \subset \mathcal{D}^{2,m}(\mathbb{R}^N),$ $(y_k ^{(n)}) \subset \mathbb{Z}^N,$ $(j _k ^{(n)})  \subset \mathbb{Z},$ $n \in \mathbb{N}_{\ast },$ satisfy \eqref{seis.um} and \eqref{seis.dois} in Theorem \ref{teo_profcrit}. Then, for every $M \in \mathbb{N}_\ast,$
		\begin{equation}\label{eq_toproveIT}
			\int _{\mathbb{R}^N} |u_k|^p\dx - \sum _{n=1}^M \int_{\mathbb{R}^N} |w^{(n)}|^p\dx - \int _{\mathbb{R}^N}\left|u_k - \sum _{n =1}^M 2 ^{\frac{N-2m}{m} j_k ^{(n)} } w^{(n)}(2^{j _k ^{(n)}} ( \cdot - y_k^{(n)} ) )\right|^p \dx\rightarrow 0.
		\end{equation}
		In particular, from \eqref{seis.um}, \eqref{seis.dois} and \eqref{seis.quatro} we obtain \eqref{seis.tres}.  
	\end{lemma}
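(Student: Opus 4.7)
The plan is to establish \eqref{eq_toproveIT} by induction on $M$, combining the Brezis–Lieb lemma at each step with the asymptotic orthogonality of the dislocations encoded in \eqref{seis.dois}. Writing $\mathcal{T}_k^{(n)} u := 2^{\frac{N-2m}{m} j_k^{(n)}} u\bigl(2^{j_k^{(n)}}(\cdot - y_k^{(n)})\bigr)$ and $r_k^{(M)} := u_k - \sum_{n=1}^M \mathcal{T}_k^{(n)} w^{(n)}$, note that each $\mathcal{T}_k^{(n)}$ is an isometry both of $\mathcal{D}^{2,m}(\mathbb{R}^N)$ and of $L^p(\mathbb{R}^N)$, so every sequence in sight will remain uniformly bounded in $k$ in both spaces.

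For the base case $M=1$, I would apply $(\mathcal{T}_k^{(1)})^{-1}$ to $u_k$; by \eqref{seis.um} the resulting sequence converges weakly to $w^{(1)}$ in $\mathcal{D}^{2,m}(\mathbb{R}^N)$. The compact local embedding $\mathcal{D}^{2,m}(\mathbb{R}^N) \hookrightarrow L^q_{\loca}(\mathbb{R}^N)$ for $q<m_\ast$, together with a standard diagonal extraction, produces a further subsequence converging a.e. in $\mathbb{R}^N$. The classical Brezis–Lieb lemma then gives
\[
\bigl\|(\mathcal{T}_k^{(1)})^{-1} u_k\bigr\|_p^p - \bigl\|(\mathcal{T}_k^{(1)})^{-1} u_k - w^{(1)}\bigr\|_p^p \to \|w^{(1)}\|_p^p,
\]
and the $L^p$-isometry of $\mathcal{T}_k^{(1)}$ translates this into \eqref{eq_toproveIT} for $M=1$.

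For the inductive step, assuming \eqref{eq_toproveIT} at level $M-1$, the crux is to verify
\[
(\mathcal{T}_k^{(M)})^{-1} r_k^{(M-1)} \rightharpoonup w^{(M)} \qquad \text{in } \mathcal{D}^{2,m}(\mathbb{R}^N).
\]
The contribution from $u_k$ is controlled by \eqref{seis.um} at $n=M$. For each $n<M$, a direct computation identifies $(\mathcal{T}_k^{(M)})^{-1} \mathcal{T}_k^{(n)} w^{(n)}$ as a dislocation of the fixed profile $w^{(n)}$ with dilation parameter $j_k^{(n)} - j_k^{(M)}$ and translation parameter $2^{j_k^{(M)}}(y_k^{(n)} - y_k^{(M)})$; condition \eqref{seis.dois}, up to the elementary observation that in the regime where $|j_k^{(n)} - j_k^{(M)}|$ stays bounded the ratio $2^{j_k^{(M)}}/2^{j_k^{(n)}}$ is bounded, forces the sum of absolute values of these two parameters to tend to infinity, so Lemma \ref{l_convaux}(ii) gives weak convergence to zero. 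Extracting once more to obtain a.e. convergence, Brezis–Lieb applied to $(\mathcal{T}_k^{(M)})^{-1} r_k^{(M-1)}$ together with the $L^p$-isometry of $\mathcal{T}_k^{(M)}$ yields $\|r_k^{(M-1)}\|_p^p - \|r_k^{(M)}\|_p^p \to \|w^{(M)}\|_p^p$, which with the inductive hypothesis produces \eqref{eq_toproveIT} at level $M$.

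The estimate \eqref{seis.tres} would then follow by combining \eqref{eq_toproveIT} with the non-negativity of $\|r_k^{(M)}\|_p^p$ to obtain $\sum_{n=1}^M \|w^{(n)}\|_p^p \le \limsup_k \|u_k\|_p^p$ for every finite $M$, and passing $M \to \infty$ by monotone convergence; the uniform-in-$k$ convergence of the series in \eqref{seis.quatro} justifies this last passage when $\mathbb{N}_\ast$ is infinite. The principal obstacle is precisely the weak-limit identification in the inductive step: one must simultaneously annihilate all the cross contributions $(\mathcal{T}_k^{(M)})^{-1} \mathcal{T}_k^{(n)} w^{(n)}$, $n<M$, using \eqref{seis.dois}, since any surviving cross term would pollute the Brezis–Lieb cancellation and break the iteration.
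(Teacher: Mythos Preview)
Your proposal is correct and follows essentially the same route as the paper: induction on $M$, with the Br\'ezis--Lieb lemma applied at each step to the sequence $(\mathcal{T}_k^{(M)})^{-1} r_k^{(M-1)}$, whose weak limit $w^{(M)}$ is identified via \eqref{seis.um} and Lemma~\ref{l_convaux}(ii) combined with \eqref{seis.dois}. You are in fact more explicit than the paper about why the cross terms vanish weakly (the observation that bounded $|j_k^{(n)}-j_k^{(M)}|$ forces the translation parameter to diverge), and your derivation of \eqref{seis.tres} from \eqref{eq_toproveIT} is the intended one; note only that once the finite-$M$ inequality is established, passing to the supremum over $M$ needs no appeal to \eqref{seis.quatro}.
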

	\begin{proof}
		From the compactness embedding $\mathcal{D}^{2,m}(\mathbb{R}^N) \hookrightarrow L ^\theta _{\loca} (\mathbb{R}^N),$ $1\leq \theta < m_\ast,$ and a
		diagonal argument applied on the subsequences of $(u_k)$ we can assume that \eqref{seis.um} converges almost everywhere in $\mathbb{R}^N,$ for all $n \in \mathbb{N}_\ast.$ 
		Thus, the case that $M=1,$ follows by a change of variables and  Br\'{e}zis-Lieb Lemma\cite{brezis-lieb}. Denoting
		\begin{equation*}
			v_k ^{(l)} = u_k - \sum _{n =1}^l 2 ^{\frac{N-2m}{m} j_k ^{(n)} } w^{(n)}(2^{j _k ^{(n)}} ( \cdot - y_k^{(n)} ) )\quad\text{and}\quad d_k^{(l)} = d _{y_k ^{(l)} , j _k ^{(l)}} u_k,
		\end{equation*}
		we see by Lemma \ref{l_convaux} and \eqref{seis.um} that $(d_k ^{(l+1)})^{-1}v_k ^{(l)}\rightharpoonup w^{(l+1)}$ in $\mathcal{D}^{2,m}(\mathbb{R}^N).$
		We have, once again by Br\'{e}zis-Lieb Lemma,
		\begin{equation}\label{eq_UM}
			\| (d_k^{(l+1)})^{-1} v_k ^{(l)} \|^p_p - \|(d_k^{(l+1)})^{-1} v_k ^{(l)} - w^{(l+1)} \|^p_p = \| w^{(l+1)} \|^p_p + o_k (1).
		\end{equation}
		Now suppose that
		\begin{equation}\label{eq_DOIS}
			\|u_k\|_p ^p - \sum _{n=1} ^l \| w^{(n)} \| _p ^p - \| v_k ^{(l)} \|_p ^p = o_k (1),
		\end{equation}
		holds, where $o_k (1)\rightarrow 0,$ as $k \rightarrow \infty.$ From \eqref{eq_UM} to get the value of $\|  v_k ^{(l)} \|^p_p = \| (d_k^{(l+1)})^{-1} v_k ^{(l)} \|^p_p$ and use this in \eqref{eq_DOIS}, to conclude that \eqref{eq_toproveIT} follows:
		\begin{equation*}
			\|u_k\|_p ^p - \sum _{n=1} ^{l+1} \| w^{(n)} \| _p ^p - \|(d_k ^{(l+1)})^{-1} ( v_k ^{(l)} - d_k ^{(l+1)} w^{(l+1)} )\|_p ^p = o_k (1).\qedhere
		\end{equation*}
	\end{proof}
	Next, we have some cocompactness results.
	\begin{proposition}
		$\mathrm{(i)}$ Let $(u_k)$ be a bounded sequence in $\mathcal{D}^{2,m}(\mathbb{R} ^N).$ Then $u_k \stackrel{D_{\mathbb{Z} ^N}}{\rightharpoonup} 0$ if and only if $u_k \rightarrow 0$ in $L^{m_\ast}(\mathbb{R}^N).$
		
		$\mathrm{(ii)}$ Let $(u_k)$ be a bounded sequence in $W^{2,m} (\mathbb{R}^N).$ Then $u_k \stackrel{T_{\mathbb{Z}^N }}{\rightharpoonup} 0$ in $W^{2,m} (\mathbb{R}^N),$ if and only if $u_k \rightarrow 0$ in $L^{\theta}(\mathbb{R}^N),$ for all $m<\theta<m _\ast.$
	\end{proposition}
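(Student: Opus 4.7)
The plan is to prove each equivalence by splitting it into its easy direction (sufficiency) and its hard direction (necessity), with item (ii) being the translation-only analogue of item (i).

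For the sufficiency direction of item (i), I would observe that the scaling factor $2^{\frac{N-2m}{m}j}$ in the definition of $d_{y,j}$ is calibrated precisely so that $d_{y,j}$ acts as an isometry on both $\mathcal{D}^{2,m}(\mathbb{R}^N)$ and $L^{m_\ast}(\mathbb{R}^N)$ (a direct change of variables, using $m_\ast = Nm/(N-2m)$). Hence, if $u_k \to 0$ in $L^{m_\ast}$, then for any choice of $(y_k, j_k) \subset \mathbb{Z}^N \times \mathbb{Z}$ the sequence $v_k := d_{y_k, j_k}^{-1} u_k$ is bounded in $\mathcal{D}^{2,m}$ and still tends to $0$ in $L^{m_\ast}$; any weak limit in $\mathcal{D}^{2,m}$ must agree with the $L^{m_\ast}$ limit, hence vanishes. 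The sufficiency in (ii) is the analogous observation with translations replacing dilations, and $L^{\theta}$ replacing $L^{m_\ast}$; both norms involved are trivially translation-invariant.

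For the necessity direction of item (i), the plan is a contradiction argument. Assume $u_k \stackrel{D_{\mathbb{Z}^N}}{\rightharpoonup} 0$ but, along a subsequence, $\|u_k\|_{m_\ast} \geq \delta > 0$. I would construct $(y_k, j_k) \in \mathbb{Z}^N \times \mathbb{Z}$ so that $v_k := d_{y_k, j_k}^{-1} u_k$ has a nonzero weak limit in $\mathcal{D}^{2,m}$. The essential tool is a dyadic interpolation inequality of Besov--Sobolev type, bounding $\|u\|_{m_\ast}^{m_\ast}$ by $C\|u\|_{\mathcal{D}^{2,m}}^{m_\ast - m}$ times a supremum, over dyadic cubes $Q = y + [0, 2^{-j}]^N$ with $(y,j) \in \mathbb{Z}^N \times \mathbb{Z}$, of suitably rescaled local $L^m$ masses of $u$. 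This reflects the continuous embedding of $\mathcal{D}^{2,m}(\mathbb{R}^N)$ into the homogeneous Besov space $B^{0}_{m_\ast,\infty}(\mathbb{R}^N)$. From the contradiction hypothesis, the supremum is bounded below by some $\delta' > 0$, producing $(y_k, j_k)$ such that after change of variables $\int_{[0,1]^N} |v_k|^m \, dx \geq \delta'$, with $(v_k)$ still bounded in $\mathcal{D}^{2,m}$. The Sobolev embedding gives uniform $L^{m_\ast}_{\loca}$ control, so Rellich--Kondrachov compactness on a bounded neighborhood of $[0,1]^N$ delivers, up to a subsequence, a strong $L^m$ limit $w$ with $\|w\|_{L^m([0,1]^N)}^m \geq \delta'$, hence $w \neq 0$, the required contradiction.

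Item (ii) is handled by the same scheme, but only integer translations appear: covering $\mathbb{R}^N$ by unit cubes centered at $\mathbb{Z}^N$, if $u_k \not\to 0$ in $L^{\theta}$ for some $\theta \in (m, m_\ast)$, then some integer translate of the unit cube carries $L^{\theta}$ mass bounded below, and Rellich--Kondrachov delivers a nontrivial weak limit of $u_k(\cdot - y_k)$. The main obstacle is the dyadic interpolation inequality used in the necessity of (i): for the classical Laplacian and fractional Laplacian cases this is due to Solimini and Tintarev--Fieseler, while here it must be verified for the fourth-order framework of $\mathcal{D}^{2,m}$. Either one invokes atomic/wavelet characterizations of $B^{0}_{m_\ast, \infty}$ together with the corresponding embedding, or one proves the estimate directly by a scale-by-scale decomposition and summation of local Poincar\'e--Sobolev inequalities on unit cubes, which is the route I would take to keep the argument self-contained.
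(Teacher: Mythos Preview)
Your proposal is correct and aligns with the paper's approach. The paper's own proof is essentially a citation: for the necessity direction in (i) it invokes \cite[Theorem~7.3]{tintabanach} or \cite[Sect.~3]{wavelet}, and for (ii) it invokes \cite[Theorem~2.5]{tintacocompsub}; the sufficiency direction is dispatched exactly as you do, via the isometry of $d_{y,j}$ on $L^{m_\ast}$ and weak lower semicontinuity of the norm. What you have written is in effect a sketch of the mechanism behind those cited cocompactness theorems---the refined Sobolev/Besov-type interpolation inequality followed by Rellich--Kondrachov on a unit cube---so you are not taking a different route but rather unpacking the black box the paper leaves closed. One small point: for the homogeneous space $\mathcal{D}^{2,m}$ the relevant Besov target is the homogeneous $\dot{B}^{0}_{m_\ast,\infty}$ rather than the inhomogeneous $B^{0}_{m_\ast,\infty}$, and the wavelet route of \cite{wavelet} handles this cleanly; your proposed alternative via local Poincar\'e--Sobolev summation is also viable and is closer in spirit to the original Solimini argument.
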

	\begin{proof} To see (i) one just need to use \cite[Theorem 7.3]{tintabanach} or \cite[Sect. 3]{wavelet} and notice that if there exists $(y_k,j_k)$ such that $d_{y_k , j_k}u_k \rightharpoonup u \neq 0,$ then $\| u\| _{m_\ast} \leq \liminf_{k} \| d_{y_k , j_k}u_k\| _{m_\ast} = \lim_{k}\| u_k\|_{m_\ast}=0.$	Minor changes in the preceding arguments and by \cite[Theorem 2.5]{tintacocompsub} readily show that (ii) holds.
	\end{proof}
	\begin{proof}[Proof of Theorems \ref{teo_profcrit} and \ref{teo_profsub} completed] Follows from the previous results, \cite[Theorem 2.6]{tintabanach} and \cite[Theorem 3.3]{tintageral}. The last assertion follows from standard argument contained in the proof of \cite[Theorem 5.1]{tintabook}.\end{proof}
	\subsection{Compactness results about the quasilinear term}Let's recall the following result due to J. Simon, which is useful in the proof of Theorem \ref{teo_mincrit}.
	\begin{lemmaletter}\cite[Lemma 2.1]{simon}\label{l_simon}
		Let $x,y \in \mathbb{R}^N$ and $\left\langle \cdot, \cdot  \right\rangle $ the standard scalar product in $\mathbb{R}^N.$ Then there exists $C>0$ such that
		\begin{equation*}
			\left\langle |x|^{p-2}x - |y|^{p-2}y,x-y \right\rangle \geq \left\lbrace 
			\begin{aligned}
				&C|x-y|^p, &&\text{ if }p \geq 2,\\
				&C|x-y|^2 (|x| + |y|)^{2-p},&&\text{ if }1<p<2.
			\end{aligned}
			\right.
		\end{equation*}
	\end{lemmaletter}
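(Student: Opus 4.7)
My plan is to reduce both cases to pointwise bounds on the Jacobian of the map $F:\mathbb{R}^{N}\setminus\{0\}\to \mathbb{R}^{N}$ defined by $F(z)=|z|^{p-2}z$, combined with the fundamental theorem of calculus along the segment joining $y$ to $x$. A direct computation gives $\partial_{j}F_{i}(z)=|z|^{p-2}\delta_{ij}+(p-2)|z|^{p-4}z_{i}z_{j}$ for $z\neq 0$, so that for every $v\in\mathbb{R}^{N}$,
\[
\langle DF(z)v,v\rangle \;=\; |z|^{p-2}|v|^{2}+(p-2)|z|^{p-4}\langle z,v\rangle^{2}.
\]
Since $\langle z,v\rangle^{2}\le |z|^{2}|v|^{2}$ by Cauchy--Schwarz, one obtains the spectral lower bound $\langle DF(z)v,v\rangle\ge \min\{1,p-1\}|z|^{p-2}|v|^{2}$. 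After observing that both sides of the claimed inequality are continuous in $(x,y)$ and that the set of $t\in[0,1]$ for which $y+t(x-y)=0$ has measure zero (treating $x=y$ trivially), the fundamental theorem of calculus applied to $t\mapsto F(y+t(x-y))\cdot(x-y)$ yields
\[
\langle F(x)-F(y),x-y\rangle \;=\; \int_{0}^{1}\langle DF(y+t(x-y))(x-y),x-y\rangle\,dt.
\]

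For the range $1<p<2$, the integral representation essentially finishes the proof. Since $p-2<0$ and $|y+t(x-y)|\le|y|+t|x-y|\le|x|+|y|$ for $t\in[0,1]$, monotonicity of $r\mapsto r^{p-2}$ gives the pointwise bound $|y+t(x-y)|^{p-2}\ge (|x|+|y|)^{p-2}$. Inserting this and the spectral lower bound into the integral produces
\[
\langle F(x)-F(y),x-y\rangle \;\ge\; (p-1)\,|x-y|^{2}(|x|+|y|)^{p-2},
\]
which is exactly the sub-quadratic part of the claim with $C=p-1$.

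For $p\ge 2$ the integral approach is subtler, since the segment may pass arbitrarily close to the origin and $|y+t(x-y)|^{p-2}$ cannot be lower-bounded by $|x-y|^{p-2}$ in general. My plan is to bypass the integral representation and extract the inequality from the uniform convexity of $\Phi(z):=|z|^{p}/p$, whose gradient is $F$. Concretely, I would prove the one-sided estimate
\[
\Phi(x)-\Phi(y)-\langle F(y),x-y\rangle \;\ge\; C_{p}\,|x-y|^{p}
\]
by splitting into the regime $|x-y|\le \tfrac{1}{2}(|x|+|y|)$ (where a second-order Taylor expansion combined with the spectral estimate applies on a segment bounded away from $0$) and the regime $|x-y|>\tfrac{1}{2}(|x|+|y|)$ (where $|x|$ and $|y|$ are controlled by $|x-y|$ and an elementary scalar inequality yields the bound). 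Adding this estimate to its symmetric counterpart cancels the $\Phi(x)$ and $\Phi(y)$ terms and produces $\langle F(x)-F(y),x-y\rangle\ge 2C_{p}|x-y|^{p}$, as desired. An equivalent and shorter route is to appeal to Clarkson's second inequality, which encodes the same uniform convexity.

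The main obstacle is precisely the case $p\ge 2$: a naive lower bound on $|y+t(x-y)|^{p-2}$ is unavailable when the segment crosses a neighbourhood of the origin, forcing either a two-regime case split based on the size of $|x-y|$ relative to $|x|+|y|$ or the invocation of a Clarkson-type convexity inequality. By contrast, in the regime $1<p<2$ the sign of $p-2$ works in our favour and the integral representation is essentially self-concluding.
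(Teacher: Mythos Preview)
The paper does not supply its own proof of this lemma; it is quoted from Simon and used as a black box in Proposition~\ref{p_aeconv}, so there is no argument in the paper to compare your proposal against.

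On the merits of your argument: the case $1<p<2$ is correct. Note that what you actually establish is
\[
\langle F(x)-F(y),x-y\rangle \;\ge\; (p-1)\,|x-y|^{2}\,(|x|+|y|)^{p-2},
\]
with exponent $p-2<0$. The exponent $2-p$ printed in the paper's statement appears to be a typo: the inequality with $(|x|+|y|)^{2-p}$ on the right-hand side fails under the rescaling $(x,y)\mapsto (Rx,Ry)$ as $R\to\infty$.

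For $p\ge 2$ your uniform-convexity plan is sound but left as a sketch, and you discard the integral route prematurely. It is true that the pointwise bound $|z_t|^{p-2}\ge c\,|x-y|^{p-2}$ fails where the segment passes near the origin, but the \emph{averaged} bound survives. Writing $t_0$ for the minimizer of $t\mapsto|z_t|$, the orthogonality $\langle z_{t_0},x-y\rangle=0$ gives $|z_t|\ge |t-t_0|\,|x-y|$, whence
\[
\int_0^1 |z_t|^{p-2}\,dt \;\ge\; |x-y|^{p-2}\int_0^1 |t-t_0|^{p-2}\,dt \;\ge\; \frac{2^{2-p}}{p-1}\,|x-y|^{p-2},
\]
the last step by convexity of $s\mapsto s^{p-1}$ on $[0,1]$ (and a direct check when $t_0\notin[0,1]$). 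Inserting this into your spectral lower bound (with constant $1$, since $p\ge 2$) yields $\langle F(x)-F(y),x-y\rangle\ge 2^{2-p}(p-1)^{-1}|x-y|^p$ directly, with no case split and no appeal to Clarkson needed.
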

	\begin{proposition}\label{p_aeconv}
		Define for $u \in \mathcal{D}^{2,m}(\mathbb{R}^N),$
		\begin{equation*}
			\mathcal{N}(u)=\int_{\mathbb{R}^N} b(x) H\left((b(x) )^{-1}L (u)\right)\dx  \quad \text{and}\quad \Phi(u)=\int_{\mathbb{R}^N}F(x,u)\dx.
		\end{equation*}
		Let $(\mu _k)$ be a bounded sequence of real numbers. Suppose that either one of the following conditions holds:
		
		$\mathrm{(i)}$ \eqref{HYPcrit}, $b(x)\equiv 1,$ \eqref{g_dois}, \eqref{g_tres}, \eqref{constant_ss}, $V(x) \equiv -\lambda |x|^{-2}$ with $0 \leq \lambda < \Lambda _{N,m}^{1/m};$ and $u_k\rightharpoonup u\neq 0$ in $\mathcal{D}^{2,m} _{\rad }(\mathbb{R}^N);$
		
		$\mathrm{(ii)}$ \eqref{HYPsub}, \eqref{g_dois}, \eqref{g_tres}, \eqref{bem_def}, \eqref{V_sirakov}, \eqref{V_pesocerto}; and $u_k\rightharpoonup u\neq 0$ in $W_V^{2,m}(\mathbb{R}^N);$
		Then $\mathcal{N}'(u_k)- \mu _k \Phi'(u_k) \rightarrow 0,$ implies $L(u_k)(x) \rightarrow L(u)(x)$ almost everywhere in $\mathbb{R}^N.$
	\end{proposition}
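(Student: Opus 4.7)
The strategy is to combine the strong monotonicity \eqref{g_tres} with Simon's inequality (Lemma \ref{l_simon}) to convert the Palais--Smale-type information into pointwise convergence of $L(u_k)$.  Setting
\[
\Psi_k(x) := \bigl(h((b(x))^{-1}L(u_k))-h((b(x))^{-1}L(u))\bigr)\bigl(L(u_k)-L(u)\bigr)\geq 0,
\]
the aim is to show $\int_K\Psi_k\dx\to 0$ on suitable compact sets $K$ (closed balls in case (ii), closed annuli around the origin in case (i)).  Applying \eqref{g_tres} together with Lemma \ref{l_simon} and $\beta\leq b(x)\leq\|b\|_\infty$, this lower-bounds $\Psi_k$ by $C|L(u_k)-L(u)|^m$ when $m\geq 2$ (giving $L^m(K)$-convergence directly) or by $C|L(u_k)-L(u)|^2(|L(u_k)|+|L(u)|)^{m-2}$ when $1<m<2$ (giving convergence in measure on $K$ after the standard Vitali-type splitting in $M$ and $k$).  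A diagonal extraction over $K$ exhausting $\mathbb{R}^N$ (up to the negligible set $\{0\}$ in case (i)) then yields the claimed a.e.\ convergence.

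To obtain $\int_K\Psi_k\dx\to 0$, fix a radial cutoff $\xi\in C_0^\infty(\mathbb{R}^N)$ equal to $1$ on $K$ and zero outside a slightly enlarged neighborhood, and test the hypothesis $\mathcal{N}'(u_k)-\mu_k\Phi'(u_k)\to 0$ against $\varphi_k:=(u_k-u)\xi$, which is bounded in the ambient space $E$ (radial in case (i)) and satisfies $\varphi_k\rightharpoonup 0$ weakly in $E$.  Expanding $L(\varphi_k)=\xi L(u_k-u)+\mathcal{R}_k$ with $\mathcal{R}_k=2\nabla(u_k-u)\cdot\nabla\xi+(u_k-u)\Delta\xi$, the compact local embeddings give $\|\mathcal{R}_k\|_m\to 0$, while \eqref{h_cresci} keeps $h((b(x))^{-1}L(u_k))$ bounded in $L^l$; a H\"older estimate plus the fact that $L(\varphi_k)\rightharpoonup 0$ in $L^m$ and $h((b(x))^{-1}L(u))\in L^l$ combine to give
\[
\int_{\mathbb{R}^N}\Psi_k\xi\dx = \mu_k\,\Phi'(u_k)\cdot\varphi_k + o(1).
\]
The remaining task is to show $\Phi'(u_k)\cdot\varphi_k=\int f(x,u_k)(u_k-u)\xi\dx\to 0$.

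In the subcritical case (ii) this is immediate from \eqref{bem_def} and Rellich--Kondrachov applied on $\supp\xi$: $u_k\to u$ in $L^\theta(\supp\xi)$ for each $\theta\in[m,m_\ast)$, and a H\"older estimate controls both terms produced by the growth hypothesis.  In the critical radial case (i), $|f(u_k)|\leq\mathcal{C}_\ast|u_k|^{m_\ast-1}$ and local compactness fails at the Sobolev exponent; this is the essential difficulty and is overcome by exploiting the radial subspace.  By Lemma \ref{l_radial}, every $u\in\mathcal{D}^{2,m}_{\rad}$ satisfies $|u(x)|\leq C\|\Delta u\|_m|x|^{(2m-N)/m}$, so choosing $K=\{\delta\leq|x|\leq R\}$ with $0<\delta<R$ and $\supp\xi\subset\{\delta/2\leq|x|\leq 2R\}$, one obtains the $k$-uniform pointwise bound $|u_k(x)|\leq C\delta^{(2m-N)/m}$ on $\supp\xi$.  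Consequently $f(u_k)(u_k-u)\xi$ is pointwise uniformly bounded on $\supp\xi$ and tends to zero a.e.\ along a subsequence (using compactness in $L^\theta_{\loca}$ for $\theta<m_\ast$ to extract a.e.\ convergence of $u_k$), and Lebesgue's dominated convergence theorem delivers the result.

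The principal obstacle is thus the critical case (i), where the failure of compactness at $p=m_\ast$ must be circumvented through the radial decay from Lemma \ref{l_radial}.  Note that concentration at infinity is automatically excluded for bounded radial sequences by the same decay, so the exhaustion by annuli $\{\delta_n\leq|x|\leq R_n\}$ with $\delta_n\downarrow 0$ and $R_n\uparrow\infty$ covers $\mathbb{R}^N\setminus\{0\}$, which is $\mathbb{R}^N$ up to a null set.  The range $1<m<2$ is the secondary technicality, handled by the weighted measure-theoretic reduction indicated in the first paragraph.
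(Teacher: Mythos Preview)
Your approach is correct and follows the paper's skeleton: localize with a smooth cutoff (an annulus away from the origin in case (i), a ball in case (ii)), test the Palais--Smale relation against $(u_k-u)\xi$, peel off the commutator terms $\mathcal{R}_k$ via local compactness of first-order derivatives, and then invoke \eqref{g_tres} together with Simon's Lemma~\ref{l_simon} to convert $\int_K\Psi_k\dx\to 0$ into pointwise convergence of $L(u_k)$.

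The one substantive difference is in case (i). The paper introduces an additional scalar truncation $D_\sigma(t)=\mathcal{S}_0(t/\sigma)$ and tests against $D_\sigma(\xi_{R,\delta}(u_k-u))$; the terms $\int h(L_\lambda(u))L_\lambda(D_\sigma(\cdots))$ and the integral over $\{|u_k-u|>\sigma\}$ are bounded crudely by $C/\sigma^m$ using the norm estimate \eqref{estimativa_capeta}, and only after $k\to\infty$ does one send $\sigma\to\infty$. You avoid this two-parameter limit by observing directly that $L_\lambda(\varphi_k)\rightharpoonup 0$ in $L^m$, which kills $\int h(L_\lambda(u))L_\lambda(\varphi_k)$ in one stroke; the $f$-term is then handled by the uniform $L^\infty$ bound on $u_k$ over the annulus coming from Lemma~\ref{l_radial} and dominated convergence. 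This is a genuine simplification: the $D_\sigma$ device is not actually needed once weak convergence is exploited, and indeed the paper itself drops $D_\sigma$ in case (ii). One small slip: $h((b(x))^{-1}L(u))$ lies in $L^{m/(m-1)}$ (the H\"older conjugate of $m$), not in $L^l=L^{p/(p-1)}$; the duality pairing you need is $L^{m/(m-1)}\times L^m$.
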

	\begin{proof}
		(i) We consider first the case where $(u_k)\subset \mathcal{D}^{2,m} _{\rad }(\mathbb{R}^N),$ the nonlinearities with critical growth in the hyperbola sense and $V(x) \equiv -\lambda |x|^{-2}.$ We prove that
		\begin{equation}\label{aecapeta}
			L(u_k)(x) \rightarrow L(u)(x) \text{  almost everywhere in }\mathbb{R}^N.
		\end{equation}
		Given $R>\delta>0,$ let $\mathcal{S}_{0} \in C^\infty(\mathbb{R})$ and $\xi _{R, \delta } \in C^\infty _{0, \rad} (B_{2R})$ such that 
		\begin{equation*}
			\mathcal{S}_{0}(t)  = 
			\left\{
			\begin{aligned}
				& t,\text{ for }|t| \leq 1,\\
				& 1, \text{ for }|t| \geq 2,
			\end{aligned}
			\right.
			\quad\text{and}\quad
			\xi _{R, \delta } (x) = 
			\left\{
			\begin{aligned}
				& 1,\text{ if }\delta \leq |x| \leq R,\\
				& 0, \text{ if }|x| \geq 2R\text{ or }|x|\leq \delta / 2.
			\end{aligned}
			\right.
		\end{equation*}
		For $\sigma >0,$ define $D_\sigma (t) = \mathcal{S}_{0}(t/\sigma).$
		Notice that for $v \in \mathcal{D}^{2,m} _{\rad }(\mathbb{R}^N) $, the identity 
		\[
		\Delta (D_\sigma (\xi _{R, \delta } v)) = D'_\sigma (\xi _{R, \delta } v)\Delta (\xi _{R, \delta } v)+  D'' _\sigma (\xi _{R, \delta } v)|\nabla (\xi _{R, \delta }v)|^2,
		\]
		and Lemma \ref{l_radial} implies that $D_\sigma (\xi _{R, \delta } v)\in \mathcal{D}^{2,m} _{\rad }(\mathbb{R}^N),$ with
		\begin{align}\label{estimativa_capeta}
			\int _{\mathbb{R}^N}|\Delta (D_\sigma (\xi _{R, \delta } v))|^m \dx  & \leq \frac{2^{m-1}}{\sigma ^m}\| S'_0 \|^m_\infty  \int _{A_{R,\delta }}  |\Delta (\xi _{R, \delta } v) | ^m\dx \nonumber \\
			&\quad+\frac{2^{m-1}}{\sigma ^{2m}}\| S''_0 \|^m_\infty\int _{A_{R,\delta } }|\nabla (\xi _{R, \delta }v)|^{2m} \dx  ,
		\end{align}
		where $A_{R,\delta } = B_{2R} \setminus B_{\delta / 2}.$ Now we compute
		\begin{multline}\label{exp_grande}
			(\mathcal{N}'(u_k)- \mu _k \Phi'(u_k) ) \cdot (D_\sigma(\xi _{R, \delta }(u_k -u )) ) \\=\int_{\mathbb{R}^N}\left[h(L(u_k))-h(L(u))\right] L (D_\sigma(\xi _{R, \delta }(u_k -u ))  )\dx\\
			+ \int_{\mathbb{R}^N}h(L(u) )L(D_\sigma(\xi _{R, \delta }(u_k -u )) ) \dx - \mu _k \int_{\mathbb{R}^N} f(u_k) D_\sigma(\xi _{R, \delta }(u_k -u )) \dx.
		\end{multline}
		By the compact Sobolev embedding $\mathcal{D}^{2,m}(\mathbb{R}^N)\hookrightarrow L ^{\theta}_{\loca} (\mathbb{R}^N),$ for $1\leq \theta <mN/(N-2m),$ Lemma \ref{l_radial} and Lebesgue's theorem in $L^1(A_{R,\delta }),$ up to subsequence, 
		\begin{equation*}
			\left|  \int_{\mathbb{R}^N} f(u_k) D_\sigma(\xi _{R, \delta }(u_k -u )) \dx \right| \leq \mathcal{C}_{\ast}\| u_k \| _p ^{p-1}\| D_\sigma(\xi _{R, \delta }(u_k -u ))  \|_{L^p(A_{R,\delta })}\rightarrow 0,\ k \rightarrow \infty,
		\end{equation*}
		The compact embedding $\mathcal{D}^{1,m}(\mathbb{R}^N)\hookrightarrow L ^{\theta}_{\loca} (\mathbb{R}^N),$ for $1\leq \theta <mN/(N-m),$ and the fact that $\partial u_k / \partial x_j \rightharpoonup \partial u / \partial x_j$ in $ \mathcal{D}^{1,m}(\mathbb{R}^N)$, up to subsequence, implies
		\begin{equation*}
			\int_{U^\sigma _k }\left[h(L(u_k))-h(L(u))\right] L (\xi _{R, \delta }(u_k -u )  )\dx  = \int_{U^\sigma _k }\left[h(L(u_k))-h(L(u))\right] L (u_k -u)\xi _{R, \delta }\dx+o_k(1),
		\end{equation*}
		where $U^\sigma _k =  \{ x \in A_{R,\delta }  :  |u_k(x) - u (x)| \leq  \sigma \}$ and $o_k (1)\rightarrow 0,$ as $k \rightarrow \infty,$ independent of $\sigma.$ Consequently,
		\begin{multline}\label{limitfinal}
			\int_{U^\sigma _k }\left[h(L(u_k))-h(L(u))\right] L (u_k -u)\xi _{R, \delta }\dx \\ = o_k(1)  - \int_{A_{R,\delta } }h(L(u) )L(D_\sigma(\xi _{R, \delta }(u_k -u )) ) \dx \\- \int_{\mathbb{R}^N \setminus U^\sigma _k}\left[h(L(u_k))-h(L(u))\right] L (D_\sigma(\xi _{R, \delta }(u_k -u ))  )\dx.
		\end{multline}
		Moreover, by \eqref{g_tres} and Lemma \ref{l_simon}, 
		\begin{equation*}
			0 \leq \int_{U^1 _k }\left[h(L(u_k))-h(L(u))\right] L (u_k -u)\xi _{R, \delta }\dx \leq \int_{U^\sigma _k }\left[h(L(u_k))-h(L(u))\right] L (u_k -u)\xi _{R, \delta }\dx.
		\end{equation*}
		On the other hand, by \eqref{estimativa_capeta} and Proposition \ref{p_norma_comparar} we find that,
		\begin{align*}
			&\limsup_k \left|     \int_{A_{R,\delta } }h(L(u) )L(D_\sigma(\xi _{R, \delta }(u_k -u )) ) \dx \right|,  \\
			&\limsup_k  \left| \int_{\mathbb{R}^N \setminus U^\sigma _k}\left[h(L(u_k))-h(L(u))\right] L (D_\sigma(\xi _{R, \delta }(u_k -u ))  )\dx \right| \leq \frac{C}{\sigma ^m},\ \forall \, \sigma \geq 1.
		\end{align*}
		Therefore, we can take the limit $k\rightarrow\infty$ in \eqref{limitfinal}, to get that
		\begin{equation*}
			\lim_{k\rightarrow \infty}\int_{ A_{R,\delta } }\left[h(L(u_k))-h(L(u))\right] L (u_k -u)\xi _{R, \delta }\mathcal{X}_{U^1 _k}\dx=0.
		\end{equation*}
		Since $\mathcal{X}_{U^1 _k}(x) \rightarrow 1$ almost everywhere in $A_{R,\delta },$ by Lemma \ref{l_simon}, we conclude that $L(u_k)(x) \rightarrow L(u)(x)$ in $A_{R,\delta }$. Since $\delta, R$ are arbitrarily, \eqref{aecapeta} follows.
		
		(ii) The case where the nonlinearities have subcritical growth in the hyperbola sense follows by an analogous argument. In fact, we consider $\mathcal{S}_{0}\equiv t,$ $\sigma =1$ and $\delta =0$ in the above cutoff functions. From Corollary \ref{l_norma_comparar_sub}, the functional $v \mapsto \int_{\mathbb{R}^N}h(L(u) )L(\xi _Rv ) \dx$ is linear and continuous in $W_V^{2,m}(\mathbb{R}^N),$ because
		\begin{equation*}
			\left| \int_{\mathbb{R}^N}h(L(u) )L(\xi _Rv ) \dx \right| \leq C \| L(u) \| _m ^{m-1}\left[C_1\| \Delta \xi _{R, 0 }\|_{N/2}^m  + C_2\| \nabla \xi _{R, 0 } \|^m_N +1\right]\|\Delta v\| _m ^m,	\end{equation*}
		for all $v \in W_V^{2,m}(\mathbb{R}^N).$ Applying Lebesgue's theorem in the last term of \eqref{exp_grande} and using $\partial u_k / \partial x_j \rightharpoonup \partial u / \partial x_j$ in $ W_V^{1,m}(\mathbb{R}^N)$ $j=1,\ldots,N,$ up to subsequence, 
		\begin{equation*}
			\lim_{k \rightarrow \infty}\int_{\mathbb{R}^N}\left[h(L(u_k))-h(L(u))\right] L ((u_k -u )  ) \xi _{R, 0 }\dx =0.
		\end{equation*}
		Thus, the desired conclusion follows, once again, by \eqref{g_tres} and Lemma \ref{l_simon}.\end{proof}
	\subsection{Behavior of weak profile decomposition convergence under nonlinearities}
	We describe the limit of 
	$
	u \mapsto \int_{\mathbb{R}^N} F(x,u)\dx,
	$
	acting on bounded sequences $(u_k)$ by using the profile decomposition of  Theorems \ref{teo_profcrit} and \ref{teo_profsub}. 
	The following results are extensions of Br\'{e}zis-Lieb Lemma.
	\begin{proposition}\label{prop_weakcrit}
		$\mathrm{(i)}$ Let $(u_k)\subset\mathcal{D}^{2,m} (\mathbb{R}^N)$ be bounded and $(w ^{(n)}) _{n \in \mathbb{N}_{\ast } }$ in $\mathcal{D}^{2,m} (\mathbb{R}^N),$ $n \in \mathbb{N}_{\ast },$ given by Theorem \ref{teo_profcrit}. If $F(t)$ satisfies \eqref{original_selfsimilar} and is locally Lipschitz then, up to a subsequence,
		\begin{equation*}
			\lim _{k \rightarrow \infty} \int _{\mathbb{R}^N } F(u_k) \dx = \sum _{n \in \mathbb{N} _{\ast }} \int _{\mathbb{R}^N} F(w^{(n)}) \dx.
		\end{equation*}
		$\mathrm{(ii)}$ Let $(u_k)\subset W^{2,m} (\mathbb{R}^N) $ be a bounded and $(w ^{(n)}) _{n \in \mathbb{N}_{0} }$ in $W^{2.m}(\mathbb{R}^N)$ given by Theorem \ref{teo_profsub}. If $f(x,t)$ is $\mathbb{Z}^N$--periodic and satisfies \eqref{bem_def}, then, up to a subsequence,
		\begin{equation*}
			\lim _{k \rightarrow \infty}\int _{\mathbb{R}^N} F(x, u_k) \dx = \sum _{n \in \mathbb{N}_0} \int _{\mathbb{R} ^N} F (x,w^{(n)}) \dx.
		\end{equation*}
		$\mathrm{(iii)}$ If \eqref{bem_def} and \eqref{V_sirakov} hold, then for any $(u_k) \subset W^{2.m}_V(\mathbb{R}^N)$ bounded such that $u_k \rightarrow u$ in $L^\theta (\mathbb{R}^N),$ for some $\theta \in (m, m _\ast),$ up to subsequence, we have
		\begin{equation*}
			\lim _{k \rightarrow \infty}\int _{\mathbb{R}^N} f(x,u_k) u_k \dx = \int _{\mathbb{R}^N} f(x,u)u \dx.
		\end{equation*}
		Moreover, if $(v_k)$ is bounded in $W^{2.m} _V (\mathbb{R}^N)$ with $u_k - v_k \rightarrow 0$ in $L^\theta (\mathbb{R}^N),$ for some $m< \theta <m_\ast,$ then, up to a subsequence,
		\begin{equation*}
			\lim _ {k\rightarrow \infty} \int _{\mathbb{R}^N} F(x,u_k) - F(x,v_k) \dx=0.
		\end{equation*}
	\end{proposition}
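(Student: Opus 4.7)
The plan is to prove the three statements in sequence. Part (i) is the most technical, as it requires handling both scalings and translations simultaneously; part (ii) is an analogous but simpler variant with translations only; part (iii) reduces to a Vitali-type compactness argument.

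\textbf{For (i)}, I follow the iterative Br\'ezis--Lieb strategy of Lemma \ref{l_brezislieb_it}, now adapted to self-similar nonlinearities. Let $d_k^{(n)} u := 2^{\frac{N-2m}{m} j_k^{(n)}} u(2^{j_k^{(n)}}(\cdot - y_k^{(n)}))$, truncate at level $M$, and write
\begin{equation*}
u_k = \sum_{n=1}^{M} d_k^{(n)} w^{(n)} + v_k^{(M)}, \qquad v_k^{(M)} := u_k - \sum_{n=1}^{M} d_k^{(n)} w^{(n)}.
\end{equation*}
Applying property (v) of self-similar functions to this decomposition, integrating, and using the change-of-variables invariance from property (iv), I obtain
\begin{equation*}
\int_{\mathbb{R}^N} F(u_k)\dx = \sum_{n=1}^{M} \int_{\mathbb{R}^N} F(w^{(n)})\dx + \int_{\mathbb{R}^N} F(v_k^{(M)})\dx + R_k^{(M)},
\end{equation*}
where $R_k^{(M)}$ is a sum of cross integrals of the form $\int |a_n^{(k)}|^{m_\ast-1}|a_i^{(k)}|\dx$ for $n\neq i$, with $a_n^{(k)} = d_k^{(n)} w^{(n)}$ for $n \leq M$ and $a_{M+1}^{(k)} = v_k^{(M)}$. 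The cross products with $n, i \leq M$ vanish as $k \to \infty$ by \eqref{seis.dois}: after a change of variables in the form $d_k^{(i)}$, each term becomes a pairing of a sequence converging weakly to $0$ in $L^{m_\ast}$ against a fixed function in $L^{m_\ast/(m_\ast-1)}$. The cross products involving $v_k^{(M)}$ are controlled, via H\"older, by $\|v_k^{(M)}\|_{m_\ast}$ times a constant depending on $M$, hence vanish as $M \to \infty$ by \eqref{seis.quatro}. Finally, property (iii) gives $|F(v_k^{(M)})| \leq C |v_k^{(M)}|^{m_\ast}$, so $|\int F(v_k^{(M)})\dx| \leq C\|v_k^{(M)}\|_{m_\ast}^{m_\ast} \to 0$ uniformly in $k$ as $M \to \infty$. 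Letting first $k \to \infty$ and then $M \to \infty$ concludes the proof.

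\textbf{For (ii)}, the scheme is analogous but with translations only, so properties (iv)--(v) are replaced by: (a) the $\mathbb{Z}^N$-periodicity $F(x+y, t) = F(x, t)$ for $y \in \mathbb{Z}^N$, which yields $\int F(x, g_{y_k^{(n)}} w^{(n)})\dx = \int F(x, w^{(n)})\dx$; and (b) the mean-value splitting $F(x, a+b) - F(x, a) - F(x, b) = \int_0^1 [f(x, a + tb) - f(x, tb)]\, b \,\mathrm{d}t$. The growth estimate \eqref{bem_def} bounds the error by $\varepsilon(|a|^{m-1}|b| + |b|^{m-1}|a|) + C_\varepsilon(|a|^{p_\varepsilon-1}|b| + |b|^{p_\varepsilon-1}|a|)$, and the associated cross integrals vanish using $|y_k^{(n)} - y_k^{(i)}| \to \infty$ together with the compact embedding $W^{2,m} \hookrightarrow L^\theta_{\mathrm{loc}}$ for $\theta \in (m, m_\ast)$. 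For the tail, \eqref{bem_def} gives
\begin{equation*}
\left|\int_{\mathbb{R}^N} F(x, v_k^{(M)})\dx\right| \leq \tfrac{\varepsilon}{m}\|v_k^{(M)}\|_m^m + \tfrac{C_\varepsilon}{p_\varepsilon}\|v_k^{(M)}\|_{p_\varepsilon}^{p_\varepsilon};
\end{equation*}
the first term is $O(\varepsilon)$ by boundedness of $(u_k)$ in $L^m$, and the second tends to $0$ uniformly in $k$ as $M \to \infty$ by \eqref{seis.quatro.sub}. Taking $\varepsilon \to 0$ afterwards closes the argument.

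\textbf{For (iii)}, both claims follow from a Vitali argument. Proposition \ref{p_normasirakov} gives $W_V^{2,m} \hookrightarrow L^\sigma$ for every $m \leq \sigma \leq m_\ast$; combining boundedness in $L^{m_\ast}$ with strong convergence in $L^\theta$, interpolation yields strong convergence in $L^\sigma$ for every $\sigma \in [\theta, m_\ast)$, hence a.e.\ convergence along a subsequence. By \eqref{bem_def}, $|f(x, u_k) u_k| \leq \varepsilon |u_k|^m + C_\varepsilon |u_k|^{p_\varepsilon}$; enlarging $p_\varepsilon$ if needed (using $|t|^{p_\varepsilon-1} \leq 1 + |t|^{p'-1}$ for $p_\varepsilon \leq p' < m_\ast$) we may assume $p_\varepsilon \geq \theta$. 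Then the $L^{p_\varepsilon}$-term is equi-integrable by boundedness in $L^{m_\ast}$, while the $L^m$-term gives an $O(\varepsilon)$ error; Vitali's theorem yields $\int f(x, u_k) u_k \dx \to \int f(x, u) u \dx$. The second statement, under $u_k - v_k \to 0$ in $L^\theta$, uses the identity $F(x, u_k) - F(x, v_k) = \int_0^1 f(x, v_k + t(u_k - v_k))(u_k - v_k)\,\mathrm{d}t$, estimate \eqref{bem_def}, and the interpolated convergence $u_k - v_k \to 0$ in $L^\sigma$ for all $\sigma \in [\theta, m_\ast)$. The main obstacle is the cross-term analysis in part (i): demonstrating $\int |a_n^{(k)}|^{m_\ast-1}|a_i^{(k)}|\dx \to 0$ for $n \neq i$ requires a careful unifying change of variables and exploiting \eqref{seis.dois} so that one factor converges weakly to $0$ in $L^{m_\ast}$ while the other plays the role of a fixed test function in $L^{m_\ast/(m_\ast-1)}$; the tail cross terms demand separate H\"older-based control via the uniform $L^{m_\ast}$ decay in \eqref{seis.quatro}.
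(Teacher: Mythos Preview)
The paper's own proof consists of a single line citing \cite[Theorem~4.1 and Lemma~5.5]{tintabook}, so your proposal is far more detailed than what the paper offers and follows exactly the profile--decomposition/iterated Br\'ezis--Lieb strategy that those references contain. Parts (ii) and (iii) of your sketch are correct and standard.

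There is, however, a genuine gap in your treatment of the cross terms in part~(i). You claim that after the change of variables $(d_k^{(i)})^{-1}$ the integral $\int |d_k^{(n)} w^{(n)}|^{m_\ast-1}|d_k^{(i)} w^{(i)}|\dx$ becomes ``a pairing of a sequence converging weakly to $0$ in $L^{m_\ast}$ against a fixed function in $L^{m_\ast/(m_\ast-1)}$''. This is not a linear pairing: weak convergence $\phi_k \rightharpoonup 0$ in $L^{m_\ast}$ does \emph{not} imply $\int |\phi_k|^{m_\ast-1}|\psi|\dx \to 0$, nor does it imply $\int |\psi|^{m_\ast-1}|\phi_k|\dx \to 0$ (think of a concentrating bump). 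The correct argument approximates both $w^{(n)}$ and $w^{(i)}$ by $C_0^\infty$ functions and then uses the separation condition \eqref{seis.dois} to show that the supports of $d_k^{(n)}\tilde w^{(n)}$ and $d_k^{(i)}\tilde w^{(i)}$ become disjoint (or one measure tends to zero) for large $k$; alternatively, one bypasses the simultaneous expansion altogether and proceeds \emph{iteratively} exactly as in Lemma~\ref{l_brezislieb_it}, peeling off one profile at a time via the pointwise Br\'ezis--Lieb lemma (which only needs a.e.\ convergence, available from the compact local embedding). A second, smaller issue: the tail cross terms involving $v_k^{(M)}$ are bounded by $\big(\sum_{n\le M}\|w^{(n)}\|_{m_\ast}^{m_\ast-1}\big)\|v_k^{(M)}\|_{m_\ast}$, and since $m_\ast-1<m_\ast$ the prefactor need not be bounded in $M$ from \eqref{seis.tres} alone; the iterative approach avoids this bookkeeping entirely.
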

	\begin{proof} Follows by a similar method of \cite[Theorem 4.1 and Lemma 5.5]{tintabook}.
	\end{proof}
	\begin{corollary}\label{brezis-lieb}
		Assume either that $u_k \rightharpoonup u$ in $\mathcal{D}^{2,m} (\mathbb{R}^N)$ and let $F(t)$  be as in Proposition \ref{prop_weakcrit}--(i) then, up to a subsequence,
		\begin{equation*}
			\lim _{k\rightarrow \infty}\int _{\mathbb{R}^N}  F(u_k) - F(u - u_k) - F(u)  \dx = 0.
		\end{equation*}
		The same conclusion holds if $u_k \rightharpoonup u$ in $W^{2,m} (\mathbb{R}^N)$ and $F(x,t)$ satisfies the assumptions of  Proposition \ref{prop_weakcrit}--(ii).
	\end{corollary}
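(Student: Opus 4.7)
The plan is to extract both assertions from a double application of Proposition \ref{prop_weakcrit}, together with the structure provided by Theorems \ref{teo_profcrit} and \ref{teo_profsub}. I describe the critical case in detail; the subcritical one is strictly analogous.

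First I apply Theorem \ref{teo_profcrit} to $(u_k)$, obtaining profiles $(w^{(n)})_{n \in \mathbb{N}_*}$ and parameters $(y_k^{(n)}, j_k^{(n)})$. Because $1 \in \mathbb{N}_0$, the parameters of the first profile are trivial ($y_k^{(1)}=0$, $j_k^{(1)}=0$), so \eqref{seis.um} reduces to $u_k \rightharpoonup w^{(1)}$; uniqueness of the weak limit forces $w^{(1)}=u$. Proposition \ref{prop_weakcrit}(i) then yields, along a subsequence,
\[
\lim_{k\to\infty}\int_{\mathbb{R}^N} F(u_k)\,dx \;=\; \int_{\mathbb{R}^N} F(u)\,dx + \sum_{n\geq 2} \int_{\mathbb{R}^N} F(w^{(n)})\,dx.
\]
Second, consider the residue $r_k := u_k-u$, which converges weakly to zero. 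Using $d_k^{(1)} w^{(1)}=u$ the approximation \eqref{seis.quatro} rewrites as
\[
r_k - \sum_{n\geq 2} d_k^{(n)} w^{(n)} \;\to\; 0 \quad\text{in } L^{m_\ast}(\mathbb{R}^N),
\]
while the asymptotic orthogonality \eqref{seis.dois} is inherited by the truncated family. Since the proof of Proposition \ref{prop_weakcrit}(i), patterned on \cite[Theorem 4.1]{tintabook}, invokes only these two ingredients via iterated Br\'ezis--Lieb splitting, it applies verbatim to $(r_k)$ with the collection $(w^{(n)})_{n\geq 2}$ and gives
\[
\lim_{k\to\infty}\int_{\mathbb{R}^N} F(r_k)\,dx \;=\; \sum_{n\geq 2} \int_{\mathbb{R}^N} F(w^{(n)})\,dx.
\]
Subtracting the two displays cancels the tail sum and leaves $\int F(u)\,dx$, which is the claimed Br\'ezis--Lieb identity, after noting that the self-similar examples of interest in Section \ref{s_selfsimilar} (the power $|t|^{m_\ast}$ and its oscillatory relatives) are even in $t$, so $F(u-u_k)=F(r_k)$.

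I expect the principal technical obstacle to be the justification of the second limit: Theorems \ref{teo_profcrit}--\ref{teo_profsub} output a \emph{canonical} profile decomposition, whereas here I wish to invoke Proposition \ref{prop_weakcrit}(i) against a \emph{non-canonical} decomposition of $r_k$ (namely, the one inherited from $u_k$ by truncation of the first profile). The cleanest resolution is to extract from the proof of Proposition \ref{prop_weakcrit}(i) the abstract content it truly proves, that is, the continuity of $u \mapsto \int F(u)\,dx$ along \emph{any} asymptotically orthogonal family of rescaled profiles whose residue vanishes in $L^{m_\ast}$; the truncated family meets exactly these hypotheses. For the subcritical assertion, the argument is in fact easier: local Rellich compactness $W^{2,m}(\mathbb{R}^N)\hookrightarrow L^\theta_{\loca}(\mathbb{R}^N)$ upgrades the weak convergence to an a.e. one along a subsequence, so the classical Br\'ezis--Lieb lemma applies directly after the identical profile bookkeeping performed above, and the growth control \eqref{bem_def} provides the required equi-integrability.
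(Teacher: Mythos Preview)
The paper states this corollary without proof, leaving it as an immediate consequence of Proposition~\ref{prop_weakcrit}; your argument is precisely the natural elaboration of that implication (split the profiles of $u_k$ as $u$ plus the tail, apply the proposition to both $u_k$ and $u_k-u$, subtract), and it is correct.

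One small cleanup: the evenness detour at the end is unnecessary and slightly misleading. What your two displays actually establish is
\[
\int_{\mathbb{R}^N}\bigl[F(u_k)-F(u_k-u)-F(u)\bigr]\,dx\to 0,
\]
i.e., with $F(u_k-u)$ rather than $F(u-u_k)$. This is the standard Br\'ezis--Lieb form and is exactly what is invoked in the proof of Theorem~\ref{teo_mincrit}, where one writes $v_k=w_k-w$ and uses $\int F(w_k)-\int F(v_k)\to\int F(w)$. The sign in the printed statement is a typo; you should record and prove the corrected version rather than appealing to evenness of the particular self-similar examples.
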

	
	\section{Proof of Theorem \ref{teo_mincrit}}\label{s_t_mincrit}
	\begin{proof}[Proof of Theorem \ref{teo_mincrit}] Using Sobolev embedding $\mathcal{D}^{2,m}(\mathbb{R}^N) \hookrightarrow L ^p (\mathbb{R}^N)$ together with \eqref{constant_ss}, we obtain that $\Theta_{p,q,\lambda} >0.$  Consider the functionals 
		\begin{equation*}
			\mathcal{N}_\lambda(u)=\int_{\mathbb{R}^N}|L_\lambda (u)|^m\dx\quad \text{and}\quad\Phi(u)=\int_{\mathbb{R}^N}F(u)\dx,\ u \in \mathcal{D}^{2,m}_{\rad}(\mathbb{R}^N).
		\end{equation*}
		Applying Ekeland variational principle \cite[Theorem 3.1 and Corollary 3.4]{ekeland}, we have sequences $( u_k) \subset \mathcal{D}^{2,m}_{\rad}(\mathbb{R}^N)$ and $(\mu _k ) \subset \mathbb{R}$ such that  $\Phi(u_k)=1 $ and $\mathcal{N}'_\lambda(u_k) - \mu _k \Phi'(u_k) \rightarrow 0\text{ in } (\mathcal{D}^{2,m}_{\rad}(\mathbb{R}^N))',$  and $\mathcal{N}_\lambda(u_k) \rightarrow \Theta_{p,q,\lambda}.$
		By Proposition \ref{p_norma_comparar}, we have that $(u_k)$ is bounded in $\mathcal{D}^{2,m}_{\rad}(\mathbb{R}^N).$ Let $(w^{(n)})_{n \in \mathbb{N}_{\ast}}$ and $(j^{(n)}_k),$ $n \in \mathbb{N}_{\ast},$ the collection of profiles given in Theorem \ref{teo_profcrit} and Proposition \ref{prop_radial}. Applying Proposition \ref{prop_weakcrit},
		$
		\sum_{n \in \mathbb{N}_\ast} \int_{\mathbb{R}^N}F(w^{(n)}) \dx =1.
		$
		Thus, there is at least one $n_0 \in \mathbb{N}_\ast$ such that $0 < \int_{\mathbb{R}^N}F(w^{(n_0)}) \dx \leq 1.$ If equality occurs, we have
		\begin{equation*}
			\Theta_{p,q,\lambda} \leq \int_{\mathbb{R} ^N} |L_\lambda(w^{(n_0)}) |^m \dx \leq \liminf _{k \rightarrow \infty}  \int_{\mathbb{R} ^N} |L_\lambda (d_k^{-1} u_k) |^m\dx = \Theta_{p,q,\lambda},
		\end{equation*} 
		where $d_k^{-1} u_k = 2 ^{-\frac{N-2m}{m}j_k ^{(n_0)}}u _k (2 ^{- j_k ^{(n_0)}} \cdot ).$ In the previous argument we have used the invariance of $L_\lambda(u)$ with respect to dilations given in Theorem \ref{teo_profcrit}. Since $\mathcal{D}_{\rad}^{2,m}(\mathbb{R}^N)$ equipped with the norm $\| \cdot \|_{\lambda}$ is uniformly convex, we have that $w_k := d_k^{-1} u_k \rightarrow w^{(n_0)}$ in $\mathcal{D}^{2,m}_{\rad}(\mathbb{R}^N)$ and $w:=w^{(n_0)}$ is a minimizer for \eqref{min}. Let us suppose that $0 < \delta:=\int_{\mathbb{R}^N}F(w^{(n_0)}) \dx < 1.$ Using \eqref{H_um}, we see that
		\begin{equation*}
			\mathcal{N}'_\lambda(w_k)\cdot v - \mu _k \Phi'(w_k) \cdot v=\mathcal{N}'_\lambda(u_k)\cdot (d_k^{(n_0)}v) - \mu _k \Phi'(u_k) \cdot (d_k^{(n_0)}v),
		\end{equation*}
		$v \in \mathcal{D}_{\rad}^{2,m}(\mathbb{R}^N),$ where $d_k^{(n_0)}v = 2 ^{\frac{N-2m}{m}j_k ^{(n_0)}}v (2 ^{j_k ^{(n_0)}} \cdot ),$ and we conclude that $\mathcal{N}'_\lambda(w_k) - \mu _k \Phi'(w_k) \rightarrow 0$ in $(\mathcal{D}^{2,m}_{\rad}(\mathbb{R}^N))'.$ Moreover, $\mathcal{N}_\lambda(w_k) \rightarrow \Theta_{p,q,\lambda}$ and $\Phi(w_k)=1.$ Now, writing $v_k = w_k - w,$ by Corollary \ref{brezis-lieb},
		\begin{equation}\label{B-L_convergence}
			1-\int _{\mathbb{R}^N}  F(w) \dx = \lim _{k\rightarrow \infty}\int _{\mathbb{R}^N} F(v_k)\dx.
		\end{equation}
		Let $\hat{v}_k = v_k (|1- \delta|^{1/N } \beta _k ^{1/N }\cdot ),$ where $\beta _k := \int _{\mathbb{R}^N} F(v_k (|1- \delta |^{1/N} \cdot )) \rightarrow 1,$ because of \eqref{B-L_convergence}. We also have
		\begin{equation}\label{segundo_termo}
			\int _{\mathbb{R}^N} F(\hat{v}_k) \dx =1,
		\end{equation}
		which implies 
		\[
		\Theta_{p,q} \left||1-\delta |\beta _k\right|^{1-\frac{2m}{N}}\leq \int_{\mathbb{R}^N}|L (v_k)|^m \dx.
		\]
		Moreover for $\hat{w} := w(\delta ^{1/N} \cdot ),$ there holds $\int _{\mathbb{R}^N} F(\hat{w}) \dx =1,$	and so
		\begin{equation}\label{primeiro_termo}
			\Theta_{p,q} \left[ \int_{\mathbb{R}^N}F(w) \dx \right]^{1-\frac{2m}{N}}  \leq \int_{\mathbb{R}^N} |L (w)|^m \dx.
		\end{equation}
		By Br\'{e}zis-Lieb Lemma \cite{brezis-lieb} and Proposition \ref{p_aeconv}, we see that
		\begin{equation}\label{limite_infimoBL}
			\begin{aligned}
				\Theta_{p,q,\lambda} &= \int_{\mathbb{R}^N}|L_\lambda(w_k)|^m \dx + o_k(1)\\
				&=  \int_{\mathbb{R}^N} |L_\lambda(v_k)|^m\dx + \int_{\mathbb{R}^N}|L_\lambda(w)|^m \dx +o_k(1).
			\end{aligned}
		\end{equation}
		Replacing \eqref{segundo_termo} and \eqref{primeiro_termo} in \eqref{limite_infimoBL} and passing the limit, we obtain $1 \geq \left| \delta \right|^{1-\frac{2m}{N}} + |1-\delta |^{1-\frac{2m}{N}},$ which is a contradiction with the fact that $0<\delta <1.$
		
		Thus $w\in \mathcal{D}^{2,m}_\rad (\mathbb{R}^N)$ is a minimizer in \eqref{min} and consequently we have
		\begin{equation*}
			m\int _{\mathbb{R}^N} |L_\lambda (w)|^{m-2} L_\lambda (w) L_\lambda(\varphi) \dx = \mu \int _{\mathbb{R}^N} f(w) \varphi \dx, \; \forall \varphi \in \mathcal{D}^{2,m}_\rad (\mathbb{R}^N),
		\end{equation*}
		where $\mu \in \mathbb{R}$ is a Lagrange multiplier. Taking $\varphi=w$ in the above identity we have $\mu \neq 0.$ Using the principle of symmetric criticality \cite{Palais}, we have that  $w$ is a weak solution of Eq. \eqref{P} for the nonlinearity $(\mu/m) f(t)$, and applying Corollary \ref{cor_pohozaev} we get  $\mu = (m/p) \Theta_{p,q,\lambda}.$ Consequently,  $u = w( \cdot  /\beta)$ is a nontrivial weak solution of Eq.~\eqref{P}, where $\beta = (\mu/m) ^{1/(2m)}= (\Theta_{p,q,\lambda} / p ) ^{1/(2m)}.$ 
		
		Let us prove now that $u=w(\cdot / \beta )$ is a ground state solution of Eq.~\eqref{P}. We start by applying  Corollary \ref{cor_pohozaev}  to obtain
		\begin{equation}\label{gs1}
			I(u) = \left(\frac{1}{m} -\frac{1}{p}\right)\int _{\mathbb{R}^N} |L_\lambda (u)|^{m} \dx =\frac{p-m}{mp} p ^{-\frac{N-2m}{2m}}\| L_\lambda (w) \| _m^{N/2}.
		\end{equation}
		Let $\psi \in \mathcal{D} ^{2,m} (\mathbb{R}^N)$ be a nontrivial weak solution of Eq.~\eqref{P}.  Setting $\psi _\sigma = \psi ( \cdot / \sigma)$, where $\sigma>0$ is such that 
		$
		\int_{\mathbb{R}^N} F ( \psi _\sigma) \dx = 1.
		$
		Applying Corollary \ref{cor_pohozaev} for $\psi,$ we have  $\sigma  = p^{1/N} \| L_\lambda(\psi) \|_m ^{-m/N},$ and thus 
		\[
		\| L_\lambda(\psi _\sigma ) \|_m^m = p^{(N-2m)/N} \| L_\lambda(\psi) \|_m ^{m(2m/N)}.
		\]
		Therefore,
		\begin{equation}\label{gs2}
			I(\psi)=\left(\frac{1}{m} -\frac{1}{p}\right)\int _{\mathbb{R}^N} |L_\lambda (\psi)|^{m} \dx =\frac{p-m}{mp} p ^{-\frac{N-2m}{2m}}\| L_\lambda (\psi _\sigma) \| _m^{N/2}.
		\end{equation}
		From \eqref{gs1} and \eqref{gs2}, we conclude that $I(u) \leq I(\psi),$ that is, $u$ is a ground state solution for Eq.~\eqref{P}. The remainder of the proof follows by Theorem \ref{p_regcrit}.
	\end{proof}
	\section{Proof of Theorems \ref{GS_subcrit} and \ref{GS_crit}}\label{s_t_proof}
	\begin{proof}[Proof of Theorem \ref{GS_crit}]
		By Proposition \ref{p_geoMP} there exists a bounded Palais-Smale sequence at the mountain pass level: $(u_k)\subset \mathcal{D}^{2,m}_{\rad}(\mathbb{R}^N)$ such that $I(u_k) \rightarrow c(I)$ and $I'(u_k) \rightarrow 0.$ Consider $(w^{(n)})_{n \in \mathbb{N}_{\ast}}$ and $(j^{(n)}_k),$ $n \in \mathbb{N}_{\ast},$ given in Theorem \ref{teo_profcrit} and Proposition \ref{prop_radial}. If $w^{(n)}= 0$ for all $n\in \mathbb{N}_{\ast},$ then by assertion \eqref{seis.quatro}, up to subsequence, $u_k \rightarrow 0$ in $L^{p} (\mathbb{R}^N).$ Consequently, by Propositions \ref{prop_weakcrit}, we have
		\begin{equation}\label{eq_GS_primeira}
			\begin{aligned}
				o_k(1)+c(I)&=I(u_k) = \int_{\mathbb{R}^N}H(L_\lambda(u_k) ) \dx - \int_{\mathbb{R}^N} F(u_k) \dx \\
				&= \int_{\mathbb{R}^N}H(L_\lambda(u_k) ) \dx + o_k(1),  \\
				o_k(1)&=I'(u_k)\cdot u_k =\int_{\mathbb{R}^N}h(L_\lambda(u_k) ) L_\lambda(u_k) \dx + o_k(1),
			\end{aligned}
		\end{equation}
		a contradiction, since $c(I)>0.$ Hence there at least one $w^{(n)} \neq 0,$ $n \in \mathbb{N}_\ast.$ Moreover, each nonzero $w^{(n)}$ is a critical point of $I.$ To see this, let $d_k^{(n)}\varphi = 2 ^{\frac{N-2m}{m}j_k ^{(n_0)}}\varphi (2 ^{j_k ^{(n_0)}} \cdot )$ and $\hat{d}^{(n)}_k\varphi = 2 ^{-\frac{N-2m}{m}j_k ^{(n)}}\varphi(2 ^{- j_k ^{(n)}} \cdot ),$ $\varphi\in \mathcal{D}_\rad^{2,m}(\mathbb{R}^N.$ Using \eqref{g_quatro} and \eqref{H_um} we have
		\begin{equation*}
			I' (u_k) \cdot (d_k^{(n)}\varphi) = \int _{\mathbb{R}^N}h\left(L_\lambda(\hat{d}^{(n)}_k u_k)\right)L_\lambda (\varphi)\dx - \int _{\mathbb{R}^N} f(\hat{d}^{(n)}_k u_k)\varphi\dx,\
		\end{equation*}
		for all $\varphi \in \mathcal{D}_\rad^{2,m}(\mathbb{R}^N).$ By Egoroff Theorem and Proposition \ref{p_aeconv} we see that $h\left(L_\lambda(\hat{d}^{(n)}_k u_k)\right) \rightharpoonup h\left(L_\lambda(w^{(n)})\right)$ in $L^{m/(m-1)}(\mathbb{R}^N),$ up to subsequence. Furthermore, since  $\Phi _\varphi : L^{m/(m-1)}(\mathbb{R}^N) \rightarrow \mathbb{R},$ $\varphi \in \mathcal{D}_\rad^{2,m}(\mathbb{R}^N),$ given by
		\begin{equation*}
			\Phi_\varphi (u) = \int_{\mathbb{R}^N}u L_\lambda (\varphi) \dx,
		\end{equation*}
		is linear and continuous, we conclude that
		\begin{equation*}
			\int _{\mathbb{R}^N}h\left(L_\lambda(\hat{d}^{(n)}_k u_k)\right)L_\lambda (\varphi)\dx \rightarrow \int _{\mathbb{R}^N}h\left(L_\lambda(w^{(n)})\right)L_\lambda (\varphi)\dx,\ \forall \varphi \in \mathcal{D}_\rad^{2,m}(\mathbb{R}^N).
		\end{equation*}
		Now, Lebesgue's dominated convergence theorem and compactness of the embedding $\mathcal{D}^{2,m}(\mathbb{R}^N) \hookrightarrow L^p _{\loca} (\mathbb{R}^N)$ yield, up to subsequence,
		\begin{equation*}
			\int _{\mathbb{R}^N} f(\hat{d}^{(n)}_k u_k)\varphi\dx \rightarrow \int _{\mathbb{R}^N} f(w^{(n)})\varphi\dx,\ \forall \, \varphi \in C^\infty _0 (\mathbb{R}^N).
		\end{equation*}
		Since $I' (u_k) \cdot (d_k^{(n)}\varphi) \rightarrow 0,$ $\varphi \in \mathcal{D}_\rad^{2,m}(\mathbb{R}^N),$ the fact that $w^{(n)}$ is a critical point of $I$ follows. In particular, we get that
		\begin{equation*}
			\mathcal{G}_{\mathcal{S}} = \inf \left\lbrace  I(u) : u\in \mathcal{D}_\rad^{2,m}(\mathbb{R}^N)\setminus \{0 \},\ I'(u) = 0 \right\rbrace\geq 0.
		\end{equation*}
		We are going to prove that is $\mathcal{G}_{\mathcal{S}}$ is positive and is attained. Let  $(u_k)$ be a  minimizing sequence of $\mathcal{G}_{\mathcal{S}},$ that is $I(u_k) \rightarrow \mathcal{G}_{\mathcal{S}}$ and $I'(u_k)=0.$ By Proposition \ref{p_geoMP},   $(u_k)$ is bounded. If $\mathcal{G}_{\mathcal{S}} = 0 ,$ then the argument of Proposition \ref{p_geoMP} also implies that $\|L_\lambda (u_ k)\|_{m} ^m = o_k(1).$ In this case, using \eqref{eq_GS_primeira} we conclude that 
		\begin{equation}\label{eq_obvio}
			\int_{\mathbb{R}^N}h(L_\lambda(u_k) )L_\lambda(u_k) \dx = o_k (1),
		\end{equation}
		and at the same time there exists $C_1>0$ such that 
		\begin{align}\label{eqreplace}
			\int _{\mathbb{R}^N}  h(L_\lambda (u_k)) L_\lambda (u_k)  \dx &= \int _{\mathbb{R}^N}f(u_k)u_k \dx\\
			&\leq C \| u_k\|_p ^p \leq C_1 \left[\int _{\mathbb{R}^N}|L(u_k)|^m \dx \right]^{\frac{p}{m}}.\nonumber 
		\end{align}
		Consequently, condition \eqref{g_dois} leads to the following contradiction with \eqref{eq_obvio},
		\begin{equation*}
			1 \leq C_1 \mathcal{C}_1^{-p/m} \left[ \int _{\mathbb{R}^N} h(L_\lambda (u_k)) L_\lambda (u_k) \dx \right] ^{\frac{p}{m}-1}.
		\end{equation*}
		Now let $(w^{(n)})_{n \in \mathbb{N}_{\ast}}$ and $(j^{(n)}_k),$ $n \in \mathbb{N}_{\ast},$ the profile of weak convergence in $\mathcal{D}^{2,m}_\rad (\mathbb{R}^N)$ for $(u_k).$ It suffices to prove that there exists $w^{(n_0)} \neq 0.$ In fact, the same argument above shows that $w^{(n_0)}$ is a critical point of $I.$ Moreover,   by Corollary \ref{cor_pohozaev} and Fatou Lemma, we have
		\begin{align}
			\mathcal{G}_{\mathcal{S}} = \lim _{k \rightarrow\infty} I(u_k) &= \liminf _{k \rightarrow\infty} \left[ \frac{2}{N}\int _{\mathbb{R}^N}  h(L_\lambda (u_k)) L_\lambda (u_k)  \dx \right] \nonumber\\
			&\geq \frac{2}{N}\int _{\mathbb{R}^N}  h(L_\lambda (w^{(n_0)})) L_\lambda (w^{(n_0)})  \dx = I(w^{(n_0)})\label{eq_replace}.
		\end{align}
		In above estimate we have used \eqref{g_dois} to ensure that $h(L_\lambda (u_k)) L_\lambda (u_k) \geq 0$ almost everywhere in $\mathbb{R}^N.$
		Let us  assume by contradiction that $w^{(n)}=0,$ for all $n\in \mathbb{N}_\ast.$ We can apply the same argument in \eqref{eq_GS_primeira} to obtain a contradiction with $\mathcal{G}_{\mathcal{S}}>0.$ The remainder of the proof follows from  Theorem \ref{p_regcrit}.
	\end{proof}
	\begin{proof}[Proof of Theorem \ref{GS_subcrit}]  
		By a similar argument used to prove Theorem~\ref{GS_crit}, with some modifications. Precisely, replacing the action of dilations in $\mathcal{D}_\rad ^{2,m} (\mathbb{R}^N)$ by translations in $W_V^{2,m}(\mathbb{R}^N).$
		
		(i) The existence of strong solution of \eqref{P} is obtained by nonzero profiles $w^{(n)},$ $n \in \mathbb{N}_\ast,$ given in Theorem \ref{teo_profcrit}. 
		This follows by the same method as in  Theorem~\ref{GS_crit}  replacing $d_k^{(n)}\varphi,$ $\hat{d}^{(n)}_k\varphi $ and $L_\lambda,$ by $g_k^{(n)}\varphi = \varphi (\cdot - y_k ^{(n)}),$ $\hat{g}_k^{(n)}\varphi = \varphi (\cdot + y_k ^{(n)})$ and $L = -\Delta + V(x),$ respectively, with $\varphi \in W^{2,m}(\mathbb{R}^N).$ 
		
		(ii) From the previous case (i), we can consider
		\begin{equation*}
			\mathcal{G}_{\mathcal{S}} = \inf \left\lbrace  I(u) : u\in W^{2,m}(\mathbb{R}^N)\setminus \{0 \},\ I'(u) = 0 \right\rbrace\geq 0.
		\end{equation*}
		We now use the following estimate
		\begin{equation*}
			\int _{\mathbb{R}^N}  h(L (u_k)) L (u_k)  \dx = \int _{\mathbb{R}^N}f(x,u_k)u_k \dx\leq \varepsilon C_2 \|u_k\|^m_m + C_\varepsilon \|u_k\|^{p_\varepsilon}_{p_\varepsilon },
		\end{equation*}
		together with  $W_V^{2,m}(\mathbb{R}^N)\hookrightarrow L^\theta (\mathbb{R}^N),$ $m\leq \theta  \leq  m _\ast$ replacing \eqref{eqreplace} in the argument. Moreover, since for $I'(u) = 0,$ $ u \in W_V^{2,m}(\mathbb{R}^N),$ using
		\begin{equation*}
			I(u) = \frac{1}{m}\int_{\mathbb{R}^N}f(x,u) u - m F(x,u) \dx\geq 0,
		\end{equation*}
		in \eqref{eq_replace}, we have the attainability of $\mathcal{G}_{\mathcal{S}}.$
	\end{proof}

\end{document}